\theoremstyle{plain}
\newtheorem{thm}{Theorem}[section]
\newtheorem{cor}[thm]{Corollary}
\newtheorem{prop}[thm]{Proposition}
\newtheorem{lem}[thm]{Lemma}
\theoremstyle{definition}
\newtheorem{defn}{Definition}[section]
\newcommand{\ga}{\alpha}
\newcommand{\gb}{\beta}
\newcommand{\gfi}{\varphi}
\newcommand{\gl}{\lambda}
\newcommand{\gp}{\pi}
\newcommand{\gm}{\mu}
\newcommand{\go}{\omega}
\newcommand{\gt}{\tau}
\newcommand{\gs}{\sigma}
\newcommand{\gz}{\zeta}
\newcommand{\gr}{\rho}
\newcommand{\N}{\mathbb{N}}
\newcommand{\Q}{\mathbb{Q}}
\newcommand{\C}{\mathcal{C}}
\newcommand{\F}{\mathcal{F}}
\newcommand{\FT}{\mathcal{FT}}
\newcommand{\FTP}{\mathcal{FT_{\gp}}}
\newcommand{\FTO}{\mathcal{FT_{\omega}}}
\newcommand{\Qx}{\mathbb{Q}[x]}
\newcommand{\nilR}{nilpotent $R$-powered group}
\newcommand{\nilQx}{nilpotent $\mathbb{Q}[x]$-powered group}
\newcommand{\NO}{\mathcal{N}_{\omega}}
\newcommand{\AO}{\mathcal{A}_{\omega}}
\newcommand{\GO}{\mathcal{G}_{\omega}}
\newcommand{\SO}{\mathcal{S}_{\omega}}
\begin{document}

\title{Separability Properties of Nilpotent $\mathbb{Q}[x]$-Powered Groups II}

\author{Stephen Majewicz}
\address{Department of Mathematics\\
         CUNY-Kingsborough Community College\\
         Brooklyn, New York 11235}
\email{smajewicz@kbcc.cuny.edu}

\author{Marcos Zyman}
\address{Department of Mathematics\\
         CUNY-Borough of Manhattan Community College\\
         New York, New York 10007}
\email{mzyman@bmcc.cuny.edu}

\subjclass[2000]{Primary 20F18, 20F19, 20E26; Secondary 13C12,
13C13, 13G05}

\date{December 20, 2023}

\keywords{nilpotent group, \nilR, \nilQx, subgroup separability}

\begin{abstract}
In this paper, we study nilpotent $\Qx$-powered groups that satisfy the following property: For some set of primes $\go$ in $\Qx,$ every $\go '$-isolated $\Qx$-subgroup in some family of its $\Qx$-subgroups is finite $\go$-type separable.
\end{abstract}

\maketitle

\section{Introduction}

This paper is a continuation of our paper entitled ``Separability Properties of Nilpotent $\Qx$-Powered Groups" \cite{majewicz_and_zyman-2020}.

\

Let $\C$ be a class of groups. A group $G$ is \emph{residually $\C$} if for every $1 \neq g \in G,$ there exists $K \in \C$ and a homomorphism $\gfi$ from $G$ onto $K$ such that $\gfi(g) \neq 1.$ A subgroup $H$ of a group $G$ is called \emph{$\C$-separable} in $G$ if, for every element $g \in G \setminus H,$ there is a homomorphism $\gfi$ of $G$ onto $K \in \C$ such that $\gfi(g) \notin \gfi(H).$ It is immediate that $G$ is residually $\C$ if and only if $\left\{ 1 \right\} $ is $\C$-separable in $G$.

Let $\mathcal{P}$ be a non-empty set of primes and $\mathcal{P} '$ the set of all primes not in $\mathcal{P}.$ A natural number $n$ is called a \emph{$\mathcal{P}$-number} if every prime divisor of $n$ belongs to $\mathcal{P}.$ By convention, $1$ is a $\mathcal{P}$-number for any set of primes $\mathcal{P}.$ A {\it $\mathcal{P}$-group} is a torsion group such that each of its elements has order a $\mathcal{P}$-number. Denote the class of all finite $\mathcal{P}$-groups by $\F _{\mathcal{P}},$ and let $G$ be any group with $H \leq G.$ If $H$ is $\F _{\mathcal{P}}$-separable in $G,$ then it can be shown that $H$ is a $\mathcal{P} '$-isolated subgroup of $G$ (i.e. if $h \in G$ and $h^{r} \in H$ for some $\mathcal{P} '$-number $r,$ then $h \in H$).

The converse, however, fails in general. For instance, in \cite{bardakov-2004}, Bardakov showed that if $G$ is a non-abelian group and $H$ is an isolated (finitely generated) subgroup of $G,$ then $H$ is not separable in the class of nilpotent groups (hence, $H$ is not $\F _{\{p \}}$-separable in $G$ for all primes $p$).

In \cite{sokolov-2014}, Sokolov continued the study of $\F _{\mathcal{P}}$-separability of subgroups in nilpotent groups. A group $G$ is said to have \emph{property $\mathcal{G}_{\mathcal{P}}$} (for some family of its subgroups) if every $\mathcal{P} '$-isolated subgroup of $G$ belonging to the family is $\F _{\mathcal{P}}$-separable. Prior to Sokolov's work, it was shown by Loginova \cite{loginova-1999} that for every prime $p,$ all finitely generated nilpotent groups have property $\mathcal{G}_{\{p\}}.$ This result can be generalized for a set of primes rather than a single prime. Two of Sokolov's results are:

\begin{enumerate}
\item An abelian group $A$ has property $\mathcal{G}_{\mathcal{P}}$ if and only if $A$ satisfies the following condition: In every quotient group $B$ of $A,$ all $p$-primary components of the torsion subgroup of $B$ have finite exponent for all $p \in \mathcal{P}.$

\vspace{.1in}

\item Every $\mathcal{P}$-restricted nilpotent group has property $\mathcal{G}_{\mathcal{P}}.$

\vspace{.1in}

\noindent (An abelian group is \emph{$\mathcal{P}$-restricted} if in every quotient $B$ of $A,$ all $p$-primary components of the torsion subgroup of $B$ are finite for all $p \in \mathcal{P}.$ A nilpotent group is \emph{$\mathcal{P}$-restricted} if it has a finite central series with $\mathcal{P}$-restricted abelian factors.)

\vspace{.1in}

\item A torsion-free nilpotent group has property $\mathcal{G}_{\mathcal{P}}$ if and only if it is $\mathcal{P}$-restricted.
\end{enumerate}

In this paper, we prove analogues of these results for nilpotent $\Qx$-powered groups. The ring $\Qx$ is an example of a \emph{binomial ring}, an integral domain with unity $R$ that has characteristic zero such that for any $r \in R$ and positive integer $k,$
\[
\binom{r}{k} = \frac{r(r - 1)\cdots(r - k + 1)}{k!} \in R.
\]

A {\emph{nilpotent $R$-powered group} is a nilpotent group $G$ that comes equipped with an action by $R$ such that $g^{\ga}$ is an element of $G$ for all $g \in G$ and $\ga \in R$, and satisfies the following:}
\begin{enumerate}
\item $g^{1} = g, \, g^{\ga}g^{\gb} = g^{\ga + \gb},$ and $(g^{\ga})^{\gb} = g^{\ga\gb}$ for all $g \in G$ and $\ga, \gb \in
R.$

\vspace{.1in}

\item $(h^{-1}gh)^{\ga} = h^{-1}g^{\ga}h$ for all $g, \ h \in G$ and for all $\ga \in R.$

\vspace{.1in}

\item If $\{g_{1}, \, \ldots, \, g_{n}\} \subset G$ and $\ga \in R,$ then
\[
g_{1}^{\ga} \cdots g_{n}^{\ga} = \tau_{1}(\bar{g})^{\ga}\gt_{2}(\bar{g})^{\binom{\ga}{2}} \cdots \gt_{k - 1}(\bar{g})^{\binom{\ga}{k-1}}\gt_{k}(\bar{g})^{\binom{\ga}{k}},
\]
where $k$ is the nilpotency class of the group generated by $\{g_{1}, \, g_{2}, \, \ldots, \, g_{n}\},$ $\bar{g} = (g_{1},
\, \ldots, \, g_{n}),$ and $\tau_{i}(\bar{g})$ is the $i$th \emph{Hall-Petresco word}.
\end{enumerate}

\vspace{.1in}

An \emph{abelian $R$-group} is a \nilR \, of class $1.$ If $G$ is an abelian $R$-group and $g, \, h \in G,$ then $(gh)^{\ga} = g^{\ga}h^{\ga}$ for any $\ga \in R.$ This follows from (3), together with the fact that $\tau_{1}(g, \, h) = gh.$ Thus, every abelian $R$-group can be viewed as an $R$-module. Throughout the paper, we use multiplicative and exponential notation for the operations in an abelian $R$-group.

Nilpotent $R$-powered groups naturally arise from taking the $R$-completion of a finitely generated torsion-free nilpotent group with respect to a given Mal'cev basis. Notions such as $R$-subgroup, $R$-morphism, direct products, $R$-series, etc. are defined in the obvious way (see \cite{hall-1969}, \cite{majewicz-2006}, and \cite{warfield}).

\vspace{.1in}

{\textbf{In this paper, all direct products will be restricted.}}

\vspace{.1in}

All necessary background material will be provided in the preliminary section. Throughout the paper, $R$ will always be a binomial ring (possibly with other qualifications). Unless otherwise told, we use ``$1$" to denote the unity element in a ring and the identity element in a group.

\vspace{.1in}

The definitions of residually $\C$ and $\C$-separable carry over to the category of \nilR s in a natural way.

\begin{defn}\label{d:Residual and R-Separability}
Let $\C$ be a class of \nilR s, and let $G$ be any \nilR.
\begin{itemize}
\item $G$ is \emph{residually $\C$} if for every $1 \neq g \in G,$ there exists $K \in \C$ and an $R$-homomorphism $\gfi$ from $G$ onto $K$ such that $\gfi(g) \neq 1.$ Equivalently, there exists a normal $R$-subgroup $N_{g}$ of $G$ with $g \notin N_{g}$ such that $G/N_{g} \in \C.$

\vspace{.1in}

\item An $R$-subgroup $H$ of $G$ is called \emph{$\C$-separable} in $G$ if, for every element $g \in G \setminus H,$ there is an $R$-homomorphism $\gfi$ of $G$ onto $K \in \C$ such that $\gfi(g) \notin \gfi(H).$ Equivalently, for every element $g \in G \setminus H,$ there exists a normal $R$-subgroup $N$ of $G$ such that $G/N \in \C$ and $g \notin HN.$
\end{itemize}
\end{defn}

\vspace{.1in}

We introduce some notation. Let $R$ be a UFD. {\textbf{We will assume throughout the paper that all sets of primes in $R$ are non-empty and no two elements in a given set are associates.}} If $\go$ is such a set, then $\go '$ will denote any set of primes satisfying the following:
\begin{itemize}
\item No element in $\go$ has an associate in $\go '.$
\item Every prime in $R$ has exactly one associate in either $\go$ or $\go '.$
\end{itemize}

\vspace{.1in}

For a set of primes $\go,$ we denote the class of \nilR s of finite $\go$-type by $\FTO$ (see Definition~\ref{d:finite0-type} below). The following definitions mimic those in Sokolov's paper:

\vspace{.02in}

\begin{itemize}
\item A \nilR \ $G$ has \emph{property $\GO$} for some family of its $R$-subgroups if every $\go '$-isolated $R$-subgroup in this family is $\FTO$-separable.

\vspace{.1in}

\item An abelian $R$-group $G$ satisfies \emph{Condition ($B_{\go}$)} if for every $R$-quotient $G/N$ of $G$ and every $\gp \in \go,$ the $\gp$-primary component of $G/N$ is bounded.

\vspace{.1in}

\item An abelian $R$-group $G$ is called \emph{$\go$-restricted} if for every $R$-quotient $G/N$ of $G$ and every $\gp \in \go,$ the $\gp$-primary component of $G/N$ is finitely $R$-generated (and, thus, finite $\go$-type).

\vspace{.1in}

\item A \nilR \ is \emph{$\go$-restricted} if it has a finite central $R$-series with $\go$-restricted abelian $R$-factors.
\end{itemize}

\vspace{.1in}

Our focus will be on the case $R = \Qx.$ Let $\go$ be a set of primes in $\Qx.$ The main results in this paper are:

\vspace{.05in}

\begin{itemize}
\item A necessary and sufficient condition for an abelian $\Qx$-group to have property $\GO$ is that is satisfies Condition ($B_{\go}$).

\vspace{.1in}

\item Every $\go$-restricted \nilQx \ has property $\GO$ for its family of normal $\Qx$-subgroups.

\vspace{.1in}

\item If $G$ is a $\Qx$-torsion-free \nilQx \ and has property $\GO,$ then each $\Qx$-factor group $\gz_{i + 1}G/\gz_{i}G$ of its upper central $\Qx$-series has property $\GO.$
\end{itemize}

\section{Preliminaries}

In this section, we provide the terminology and results on \nilR s which will be needed in this paper. Let $G$ be a \nilR. The following appear in \cite{clement_majewicz_zyman-2017}, \cite{hall-1969}, and \cite{warfield}:
\begin{itemize}
\item If $H$ is an $R$-subgroup (normal $R$-subgroup) of $G,$ then we write $H \leq_{R} G$ ($H \unlhd_{R} G$). We say that $H$ is \emph{$R$-generated by $X \subseteq G$} if $H$ is the smallest $R$-subgroup of $G$ containing $X.$ In this situation, we write $H = gp_{R}(X).$ If $X$ is finite, then we say that $H$ is \emph{finitely $R$-generated}.

\vspace{.1in}

\noindent \underline{Notation}: If $\ga \in R,$ then $G^{\ga} = gp_{R}(g^{\ga} \ | \ g \in G).$ Note that $G^{\ga} \unlhd_{R} G.$

\vspace{.1in}

\item If $N \unlhd_{R} G,$ then the $R$-action on $G$ induces an $R$-action on $G/N,$
\[
(gN)^{\ga} = g^{\ga}N \hbox{ for all } gN \in G/N \hbox{ and } \ga \in R,
\]
which turns $G/N$ into a \nilR.

\vspace{.1in}

\item If $H \leq_{R} G$ and $N \unlhd_{R} G,$ then $HN \leq_{R} G$ and $HN = gp_{R}(H, \ N),$ the smallest $R$-subgroup containing $H$ and $N$.
Furthermore, the kernel of an $R$-homomorphism $\gfi : G_{1} \rightarrow G_{2}$ is a normal $R$-subgroup of $G_{1}.$ It follows that the usual isomorphism theorems carry over to nilpotent $R$-powered groups. We will refer to them as the \emph{$R$-isomorphism theorems}. If $G_{1}$ and $G_{2}$ are $R$-isomorphic \nilR s, then we write $G_{1} \cong_{R} G_{2}.$

\vspace{.1in}

\item All upper central subgroups of $G,$ written as $\gz_{i}G,$ are $R$-subgroups of $G.$ The center of $G$ will be denoted by $Z(G).$
\end{itemize}

\vspace{.2in}

The concept of a $P$-number for a set of primes $P \subset \N$ is generalized in the next definition which is a slight improvement of our earlier definition in \cite{majewicz_and_zyman-2012(2)}. We remind the reader that all sets of primes in $R$ are non-empty and no pair of elements in such sets are associates.

\begin{defn}\label{d:o-element}
Let $R$ be a UFD and $\go$ a set of primes in $R.$ A non-zero element $\ga \in R$ is an \emph{$\go$-member} if $\ga$ is a non-unit and each prime divisor of $\ga$ has an associate in $\go.$
\end{defn}

\begin{defn}\cite{majewicz_and_zyman-2012(2)}\label{d:TORSION}
Let $R$ be a UFD and $\go$ a set of primes in $R.$ Suppose that $G$ is a \nilR. An element $g \in G$ is an \emph{$\go$-torsion element} if $g^{\ga} = 1$ for some $\go$-member $\ga.$ The set of $\go$-torsion elements of $G$ is written as $\gt_{\go}(G).$ If every element of $G$ is $\go$-torsion, then $G$ is an \emph{$\go$-torsion group}. We say that $G$ is \emph{$\go$-torsion-free} if the only $\go$-torsion element of $G$ is $1.$
\end{defn}

If $\gp$ is a prime in $R$ and $\go = \{\gp\},$ then we use the terms $\gp$-torsion and $\gp$-torsion-free and write $\gt_{\gp}(G)$ for the set of $\gp$-torsion elements of $G.$ It is clear that if $\go_{1}$ and $\go_{2}$ are sets of primes in $R,$ then $\gt_{\go_{1}}(G) = \gt_{\go_{2}}(G)$ if every element in $\go_{1}$ has an associate in $\go_{2}$ and vice versa. In particular, $\gt_{\gp_{1}}(G) = \gt_{\gp_{2}}(G)$ whenever $\gp_{1}$ and $\gp_{2}$ are prime associates.

\vspace{.1in}

The next definition does not require $R$ to be a UFD.

\begin{defn}\cite{warfield}\label{d:RTorsion}
Let $G$ be a \nilR.
\begin{itemize}
\item An element $g \in G$ is an \emph{$R$-torsion element} if $g^{\ga} = 1$ for some $0 \neq \ga \in R.$ The set of $R$-torsion elements of $G$ is denoted by $\gt(G).$

\item If every element of $G$ is $R$-torsion, then $G$ is called an \emph{$R$-torsion group}. We say that $G$ is \emph{$R$-torsion-free} if the only $R$-torsion element of $G$ is $1.$
\end{itemize}
\end{defn}

\begin{thm}\cite{majewicz_and_zyman-2012(2)}, \cite{warfield}\label{t:TorsionIsNormal}
Let $G$ be a \nilR.
\begin{itemize}
\item If $R$ is a UFD and $\go$ is a set of primes in $R,$ then $\gt_{\go}(G) \unlhd_{R} G$ and $G/\gt_{\go}(G)$ is $\go$-torsion-free.

\item If $R$ is any binomial ring, then $\gt(G) \unlhd_{R} G$ and $G/\gt(G)$ is $R$-torsion-free.
\end{itemize}
\end{thm}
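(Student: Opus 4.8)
The plan is to reduce the whole statement to one generation claim and then prove that claim by induction on nilpotency class. I will carry out the argument for the $\go$-torsion assertion only; the $R$-torsion assertion is proved the same way after replacing ``$\go$-member'' by ``nonzero element of $R$'', using that a binomial ring is an integral domain so that products of nonzero elements remain nonzero.

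First I would dispose of the parts of the statement not involving multiplication. Let $g\in\gt_{\go}(G)$ with $g^{\ga}=1$ for an $\go$-member $\ga$. Axiom (2) gives $(h^{-1}gh)^{\ga}=h^{-1}g^{\ga}h=1$ for all $h\in G$, so $\gt_{\go}(G)$ is closed under conjugation; axiom (1) gives $(g^{-1})^{\ga}=(g^{\ga})^{-1}=1$ and $(g^{\gd})^{\ga}=g^{\gd\ga}=(g^{\ga})^{\gd}=1$ for all $\gd\in R$, so $\gt_{\go}(G)$ is closed under inversion and under the $R$-action. Also, once $\gt_{\go}(G)$ is known to be an $R$-subgroup the quotient $G/\gt_{\go}(G)$ is automatically $\go$-torsion-free: if $g^{\ga}\in\gt_{\go}(G)$ for an $\go$-member $\ga$, then $(g^{\ga})^{\gb}=1$ for some $\go$-member $\gb$, hence $g^{\ga\gb}=1$ and $g\in\gt_{\go}(G)$. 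So the theorem reduces to showing $\gt_{\go}(G)$ is closed under multiplication, and for this it is enough to prove the following claim: \emph{a \nilR\ that is $R$-generated by a set of $\go$-torsion elements is an $\go$-torsion group.} Indeed, given $a,b\in\gt_{\go}(G)$, the claim applied to $H=gp_{R}(a,b)$ shows that every element of $H$, in particular $ab$, is $\go$-torsion.

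To prove the claim I would induct on the nilpotency class $c$ of $H=gp_{R}(X)$, where every element of $X$ is $\go$-torsion. If $c\le 1$ then $H$ is an abelian $R$-group, hence an $R$-module equal to $\sum_{x\in X}Rx$; any element is a finite sum $\sum_{i=1}^{n}\gr_{i}x_{i}$, and if each $\ga_{i}$ is an $\go$-member with $x_{i}^{\ga_{i}}=1$ then $\prod_{i}\ga_{i}$ is an $\go$-member annihilating the sum. If $c>1$, set $N=\gamma_{c}(H)$, a central $R$-subgroup; then $H/N$ has class $c-1$ and is $R$-generated by the images of $X$ (still $\go$-torsion), so $H/N$ is an $\go$-torsion group by induction, and therefore every $h\in H$ satisfies $h^{\gd}\in N$ for some $\go$-member $\gd$. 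It remains to show $N$ is an $\go$-torsion group. Here one uses that $\gamma_{c}(gp_{R}(X))$ is $R$-generated by the weight-$c$ commutators $[x_{i_{1}},\dots,x_{i_{c}}]$ with $x_{i_{j}}\in X$, and that — $N$ being central with $c$ the class of $H$ — such a commutator is multilinear in the $R$-action on each slot, so $[x_{i_{1}},\dots,x_{i_{c}}]^{\ga}=[x_{i_{1}}^{\ga},x_{i_{2}},\dots,x_{i_{c}}]=1$ when $\ga$ is an $\go$-member with $x_{i_{1}}^{\ga}=1$. Thus $N$ is an abelian $R$-group $R$-generated by $\go$-torsion elements, hence an $\go$-torsion group by the case $c\le 1$; combined with $h^{\gd}\in N$ this gives $h^{\gd\epsilon}=1$ for a suitable $\go$-member $\epsilon$, completing the induction.

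The only genuinely delicate points are the two structural facts about nilpotent $R$-powered groups invoked in the inductive step: that the lower central term $\gamma_{c}(gp_{R}(X))$ is $R$-generated by the weight-$c$ commutators in $X$, and that a commutator whose weight equals the nilpotency class is $R$-multilinear in each entry. Both follow from the commutator identities for $R$-powered groups in \cite{hall-1969}, \cite{warfield}, \cite{clement_majewicz_zyman-2017}, the point being that every correction term produced — in particular the ones carrying $\binom{\ga}{i}$-exponents coming from axiom (3) — has weight strictly larger than $c$ and is therefore trivial in $H$. Everything else is routine manipulation with the axioms, so I expect the write-up to be short.
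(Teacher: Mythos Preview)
The paper does not prove this theorem; it is quoted without proof as a preliminary result from \cite{majewicz_and_zyman-2012(2)} and \cite{warfield}, so there is no in-paper argument to compare against.

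Your proposal is correct and is essentially the standard proof one finds in those references. The reductions you make (conjugation, inversion, $R$-action, and $\go$-torsion-freeness of the quotient are immediate from axioms (1)--(2); everything comes down to closure under products) are exactly right, and the inductive claim ``$gp_{R}(X)$ is $\go$-torsion whenever $X$ consists of $\go$-torsion elements'' via the lower central series is the usual route. The abelian base case is fine because $R$ is a UFD (so a product of $\go$-members is again an $\go$-member) or, in the second item, an integral domain. The two structural facts you isolate at the end --- that $\gamma_{c}(gp_{R}(X))$ is $R$-generated by weight-$c$ commutators in $X$, and that such commutators are $R$-multilinear --- are indeed the only substantive inputs, and both hold precisely because every Hall--Petresco correction term has weight exceeding $c$ and hence vanishes; they are established in the sources you cite. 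One cosmetic remark: you could equally well run the induction through the upper central series (using $Z(H)$, which the present paper explicitly records as an $R$-subgroup) rather than $\gamma_{c}(H)$, but your version is cleaner for identifying an explicit $\go$-member annihilator.
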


\begin{defn}\cite{majewicz_and_zyman-2012(2)}\label{d:finite0-type}
Let $R$ be a UFD and $\go$ a set of primes in $R.$ A finitely $R$-generated $\go$-torsion group $G$ is said to be of \emph{finite $\go$-type}. If $\go = \{ \gp \}$ for a prime $\gp \in R,$ then $G$ is of \emph{finite $\gp$-type}.
\end{defn}

The classes of \nilR s of finite $\go$-type and finite $\gp$-type will be denoted by $\FTO$ and $\FTP$ respectively.

\vspace{.2in}

The next definition does not require $R$ to be a UFD.

\begin{defn}\cite{majewicz-2006}\label{d:finitetype}
A \nilR \ is of \emph{finite type} if it is a finitely $R$-generated $R$-torsion group.
\end{defn}

The class of \nilR s of finite type will be denoted by $\FT.$

\vspace{.15in}

\begin{thm}\cite{kargapolov and et al-1969}\label{t:MAX}
If $R$ is a noetherian domain, then every $R$-subgroup of a finitely $R$-generated \nilR \ is finitely $R$-generated.
\end{thm}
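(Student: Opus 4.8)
The plan is to prove the slightly stronger statement that \emph{every} $R$-subgroup $H$ of a finitely $R$-generated \nilR \ $G$ is finitely $R$-generated, by induction on the nilpotency class $c$ of $G$. The base case $c \leq 1$ is immediate: an abelian $R$-group is precisely a (here finitely generated) $R$-module, its $R$-subgroups are exactly its $R$-submodules, and since $R$ is noetherian every submodule of a finitely generated $R$-module is finitely generated.

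For the inductive step suppose $c \geq 2$, fix a finite $R$-generating set $g_{1}, \dots, g_{n}$ of $G$, and put $N = gp_{R}(\gg_{c}G)$, the $R$-subgroup $R$-generated by the last nontrivial term of the lower central series of $G$. Since $[\gg_{c}G, G] = \gg_{c+1}G = 1$ we have $\gg_{c}G \subseteq Z(G)$, and as $Z(G) = \gz_{1}G$ is an $R$-subgroup it follows that $N \leq Z(G)$; hence $N \unlhd_{R} G$ and $N$ is an $R$-submodule of the abelian $R$-group $Z(G)$. The essential point is that $N$ is finitely $R$-generated: this should come from the $R$-powered analogue of the classical commutator-collection process --- built from axioms (2) and (3) of a \nilR, i.e. from the Hall--Petresco identity --- which shows that for each $i$ the abelian $R$-group $gp_{R}(\gg_{i}G)/gp_{R}(\gg_{i+1}G)$ is $R$-spanned by the images of the finitely many basic commutators of weight $i$ in $g_{1}, \dots, g_{n}$; specializing to $i = c$ yields that $N = gp_{R}(\gg_{c}G)$ is finitely $R$-generated. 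One must be a little careful here, since the ordinary lower central subgroups $\gg_{i}G$ need not be closed under the $R$-action, which is exactly why one works throughout with their $R$-closures $gp_{R}(\gg_{i}G)$ and why this step genuinely requires the $R$-powered collection argument rather than a citation of the group-theoretic one.

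Granting this, the remainder is routine. The quotient $G/N$ is finitely $R$-generated and satisfies $\gg_{c}(G/N) = \gg_{c}G \cdot N/N = 1$, so it has nilpotency class at most $c - 1$; by the inductive hypothesis every $R$-subgroup of $G/N$ is finitely $R$-generated. Given $H \leq_{R} G$, its image $HN/N \leq_{R} G/N$ is therefore finitely $R$-generated, say by the images of $h_{1}, \dots, h_{m} \in H$, while $H \cap N$ --- an $R$-submodule of the finitely $R$-generated module $N$ --- is finitely $R$-generated because $R$ is noetherian. Setting $H_{0} = gp_{R}(h_{1}, \dots, h_{m}, H \cap N) \leq H$ one checks that $H_{0}N = HN$, so each $h \in H$ can be written $h = h_{0}n$ with $h_{0} \in H_{0}$ and $n = h_{0}^{-1}h \in H \cap N \subseteq H_{0}$; hence $H = H_{0}$ is finitely $R$-generated, completing the induction.

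The main obstacle is precisely the middle paragraph: setting up the $R$-powered commutator-collection machinery so as to conclude that $gp_{R}(\gg_{c}G)$ is finitely $R$-generated. Everything else is the standard ``an extension of a finitely generated object by a finitely generated object is finitely generated'' bookkeeping, with the noetherian hypothesis on $R$ used twice --- in the abelian base case, and to see that $H \cap N$ is finitely $R$-generated.
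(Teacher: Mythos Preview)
The paper does not prove this theorem at all: it is simply quoted from \cite{kargapolov and et al-1969} and used as a black box, so there is no ``paper's own proof'' to compare against.

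Your outline is the standard one and is structurally sound. The base case and the extension bookkeeping (the last paragraph) are completely correct; in particular, the noetherian hypothesis is invoked exactly where it must be. The honest acknowledgment that the whole weight of the argument rests on showing $gp_{R}(\gg_{c}G)$ is finitely $R$-generated is accurate, and your diagnosis of why this is delicate --- the $g_{i}$ only $R$-generate $G$, so ordinary commutator-collection on the underlying abstract group says nothing --- is exactly right. What is actually needed is the Hall--Petresco / collection machinery in the $R$-powered category to show that every element of $G$ can be written as $u_{1}^{\ga_{1}}\cdots u_{m}^{\ga_{m}}$ with the $u_{j}$ a fixed finite list of iterated commutators in $g_{1},\dots,g_{n}$ and $\ga_{j}\in R$; from this one reads off that the image of each $gp_{R}(\gg_{i}G)$ in the abelian $R$-group $gp_{R}(\gg_{i}G)/gp_{R}(\gg_{i+1}G)$ is spanned over $R$ by the finitely many weight-$i$ commutators. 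This is precisely what Kargapolov--Remeslennikov--Romanovskii--Roman'kov--Churkin establish, and it is not a one-line remark: it is the substantive content of the cited theorem. So your proposal is not wrong, but as written it is a reduction of the theorem to its hard core rather than a proof of it; the phrase ``this should come from'' is doing all of the work. If you want a self-contained argument you will have to carry out that $R$-powered collection in detail, or else cite \cite{kargapolov and et al-1969} at that step --- which is exactly what the paper does for the whole theorem.
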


\begin{cor}\cite{majewicz_and_zyman-2009}\label{c:MAX}
Let $G$ be a \nilR, where $R$ is a noetherian domain, and $\go$ a set of primes in $R.$
\begin{itemize}
\item If $G \in \FT$ and $H \leq_{R} G,$ then $H \in \FT.$ If $H \unlhd_{R}G,$ then $G/H \in \FT.$

\vspace{.05in}

\item Suppose that $R$ is also a UFD. If $G \in \FTO$ and $H \leq_{R} G,$ then $H \in \FTO.$ If $H \unlhd_{R}G,$ then $G/H \in \FTO.$
\end{itemize}
\end{cor}

It is well-known that the torsion subgroup of a nilpotent group is a direct product of its $p$-torsion subgroups, where $p$ ranges over the primes (see \cite{clement_majewicz_zyman-2017}). Similarly, we have:

\begin{thm}\cite{majewicz-2006}\label{t:DecompositionTheorem}
Let $R$ be a PID and $G$ a \nilR. Let $\mathbb{P}$ be a set of primes in $R$ such that each prime in $R$ has exactly one associate in $\mathbb{P},$ and let $\go$ be a set of primes in $R.$
\begin{itemize}
\item If $G$ is an $R$-torsion group, then $G = \prod_{\gp \in \mathbb{P}}\gt_{\gp}(G).$

\vspace{.05in}

\item If $G$ is an $\go$-torsion group, then $G = \prod_{\gp \in \go}\gt_{\gp}(G).$
\end{itemize}
\end{thm}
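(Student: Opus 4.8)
The plan is to derive both decompositions from a single argument resting on B\'{e}zout identities in the PID $R,$ the normality of torsion sub-objects (Theorem~\ref{t:TorsionIsNormal}), and nothing about $G$ beyond the three $R$-powered group axioms. Since $\gt_{\gp}(G)$ depends only on the associate class of $\gp,$ I will freely replace primes by their associates in $\mathbb{P}$ or in $\go;$ and I will use repeatedly that in a PID, coprime elements $\gm, \gl$ (unit gcd) admit $r, s \in R$ with $r\gm + s\gl = 1.$ The key discipline is that a product of non-commuting elements is \emph{never} raised to a power --- only powers of a single element are manipulated, which is legitimate by the $R$-powered group axiom (1).

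First I would prove a generation statement: if $g^{\ga} = 1$ with $0 \neq \ga$ a product of $m$ prime powers, then $g$ is a product of $m$ elements, each lying in a suitable $\gt_{\gp_i}(G);$ this goes by induction on $m.$ Writing $\ga = \gp_1^{e_1}\cdots\gp_m^{e_m}$ (absorbing the unit into $\gp_1^{e_1}$), I would put $\gm = \gp_1^{e_1},$ $\gl = \gp_2^{e_2}\cdots\gp_m^{e_m},$ choose $r\gm + s\gl = 1,$ and set $a = g^{s\gl},$ $b = g^{r\gm}.$ Then axiom (1) gives $ab = g^{s\gl + r\gm} = g,$ while $a^{\gm} = (g^{\ga})^{s} = 1$ and $b^{\gl} = (g^{\ga})^{r} = 1,$ so $a \in \gt_{\gp_1}(G)$ and $b$ is $\{\gp_2, \dots, \gp_m\}$-torsion, and the inductive hypothesis applies to $b.$ When $\ga$ is an $\go$-member the primes $\gp_i$ may be chosen in $\go,$ which handles the $\go$-torsion case. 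I conclude $G = gp_{R}(\gt_{\gp}(G) : \gp \in \mathbb{P})$ for $G$ $R$-torsion and $G = gp_{R}(\gt_{\gp}(G) : \gp \in \go)$ for $G$ $\go$-torsion; more generally, since $\gt_{S}(G) \unlhd_{R} G$ for any set of primes $S$ (Theorem~\ref{t:TorsionIsNormal}), $gp_{R}(\gt_{\gp}(G) : \gp \in S) = \gt_{S}(G).$

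Next I would establish independence: an element killed both by a power of a prime $\gp$ and by an $S$-member, for some set $S$ of primes none associate to $\gp,$ is killed by a unit (again B\'{e}zout, single-element manipulation), hence is $1;$ so $\gt_{\gp}(G) \cap \gt_{S}(G) = 1.$ Taking $S$ to consist of all primes not associate to $\gp$ and invoking the previous step, $\gt_{\gp}(G) \cap gp_{R}\big(\gt_{\gr}(G) : \gr \text{ not associate to } \gp\big) = \gt_{\gp}(G) \cap \gt_{S}(G) = 1.$ At this point I would assemble the conclusion: each $\gt_{\gp}(G)$ is a normal $R$-subgroup (Theorem~\ref{t:TorsionIsNormal}); the family $R$-generates $G$ over $\mathbb{P}$ (resp.\ over $\go$); each torsion element is annihilated by a single $\ga \neq 0$ and so involves only finitely many primes, making the product genuinely restricted; and each factor meets the $R$-subgroup generated by the others trivially --- which is precisely the statement that $G$ is the restricted direct $R$-product of the $\gt_{\gp}(G).$ Elementwise commutativity of distinct factors then needs no separate proof: $[\gt_{\gp}(G), \gt_{\gr}(G)] \subseteq \gt_{\gp}(G) \cap \gt_{\gr}(G) = 1,$ both being normal.

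I do not expect a genuine obstacle, only a discipline to maintain: never using $(xy)^{\ga} = x^{\ga}y^{\ga}$ for non-commuting $x, y,$ and keeping the associate-class bookkeeping for $\mathbb{P}$ and $\go$ straight (including the trivial edge case where $R$ has a single associate class of primes, where the statement reduces to $G = \gt_{\gp}(G)$). The one substantive input is the normality of $\gt_{S}(G)$ in a nilpotent $R$-powered group --- equivalently, that a product of torsion elements of pairwise coprime orders is again torsion --- which is exactly what Theorem~\ref{t:TorsionIsNormal} supplies; were that unavailable, reproving it by induction on the nilpotency class (passing to $G/Z(G)$ and pushing the resulting central commutator through axioms (1)--(2)) is where the real work would lie.
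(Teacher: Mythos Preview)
The paper does not prove this theorem; it is quoted in the Preliminaries as a result from \cite{majewicz-2006} with no argument given, so there is nothing in-paper to compare your proposal against.

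That said, your outline is the standard primary-decomposition argument and is correct. The B\'ezout step, applied only to powers of a single element $g,$ legitimately factors $g$ into its primary pieces using axiom~(1) alone; coprimality forces $\gt_{\gp}(G)\cap\gt_{S}(G)=1$ whenever $\gp$ has no associate in $S;$ and normality of each $\gt_{\gp}(G)$ from Theorem~\ref{t:TorsionIsNormal} yields $[\gt_{\gp}(G),\gt_{\gr}(G)]\subseteq\gt_{\gp}(G)\cap\gt_{\gr}(G)=1,$ giving the internal restricted direct product. Your care not to invoke $(xy)^{\ga}=x^{\ga}y^{\ga}$ for non-commuting $x,y$ is exactly the point that distinguishes this from the module-theoretic version, and you have handled it properly.
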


\vspace{.1in}

Next we define the order ideal of an element belonging to a \nilR \ and the exponent ideal of a \nilR. Some of this appears in \cite{majewicz_and_zyman-2009} and mimics the definitions from module theory.

\begin{defn}\label{d:ORDER}
Let $R$ be a PID and $G$ a \nilR \, with $g \in G.$ The set
\[
Ann(g) = \{ \ga \in R \ | \ g^{\ga} = 1\}
\]
is an ideal of $R$ called the \emph{order ideal} or \emph{annihilator} of $g.$ We say that $g$ has \emph{infinite (or unbounded) order} whenever $Ann(g) = \{0\}.$
\end{defn}

Suppose that $Ann(g) \neq \{0\}.$ Since $R$ is a PID, there exists an element $0 \neq \gb \in R$ such that $Ann(g) = \, <\gb>,$ where $<\gb>$ is the ideal generated by $\gb.$

\begin{defn}\label{d:Order}
The element $\gb$ is called the \emph{non-infinite (or bounded) order} of $g.$
\end{defn}

Clearly, any associate of $\gb$ also generates $Ann(g).$ Hence, the order of $g$ is unique up to a unit factor.

\begin{defn}\label{d:ExponentOfNilR}
Let $R$ be a PID. The \emph{exponent ideal} of a \nilR \ $G$ is
\[
Ann(G) = \{ \ga \in R \ | \ g^{\ga} = 1 \hbox{ for all } g \in G\}.
\]

\vspace{.05in}

If $Ann(G) = \{0\},$ then $G$ has \emph{infinite (or unbounded) exponent}. In this case, $G$ is said to be \emph{unbounded}. Otherwise, $G$ is termed \emph{bounded}.
\end{defn}

As before, there exists an element $0 \neq \gl \in R$ such that $Ann(G) = \, <\gl>$ whenever $Ann(G) \neq \{0\}.$

\begin{defn}\label{d:EXPONENT}
The element $\gl$ is the \emph{non-infinite (or bounded) exponent} of $G.$
\end{defn}

The bounded exponent $\gl$ of $G$ is unique up to a unit factor since any associate of $\gl$ also generates $Ann(G).$

\vspace{.1in}

Every bounded \nilR \ is an $R$-torsion group. However, an $R$-torsion group is not necessarily bounded. Furthermore, if $G$ is a \nilR \ and $\ga \in R,$ then $G/G^{\ga}$ is bounded.

\vspace{.2in}

\noindent \underline{Examples:} Let $R$ be a PID.

\vspace{.05in}

\noindent 1) \ Let $\go$ be a set of primes in $R.$ Each element of a \nilR \ of finite $\go$-type has bounded order and the group is bounded (see \cite{majewicz_and_zyman-2009}). The same is true for nilpotent $R$-powered groups of finite type.

\vspace{.1in}

\noindent 2) \ Let $G$ be a \nilR\ and $\gp$ a prime in $R.$
\begin{itemize}
\item The direct product
\[
G/G^{\gp} \times G/G^{\gp^{2}} \times G/G^{\gp^{3}} \times \cdots
\]
is an unbounded $\gp$-torsion group.

\vspace{.1in}

\item The direct product
\[
G/G^{\gp} \times G/G^{\gp} \times G/G^{\gp} \times \cdots
\]
is a bounded $\gp$-torsion group.
\end{itemize}

\vspace{.1in}

\noindent 3) \ Let $\gp$ be a prime in $R.$ Suppose that $G$ is an abelian $R$-group with $R$-generating set $\{g_{1}, \ g_{2}, \ g_{3}, \ \ldots \}$ with $g_{1} \neq 1$ and subject to the relations
\[
g_{1}^{\gp} = 1, \ g_{2}^{\gp} = g_{1}, \ g_{3}^{\gp} = g_{2}, \ \ldots.
\]
Then $G$ is an unbounded $\gp$-torsion group called the \emph{$\gp$-quasicyclic $R$-group}. It will be denoted by $Q(\gp^{\infty}).$

\vspace{.2in}

The next theorem is well-known for modules (see \cite{bourbaki-1958}).

\begin{thm}\label{t:Bounded Module}
Let $R$ be a PID. Every bounded abelian $R$-group is a direct product of cyclic $R$-groups.
\end{thm}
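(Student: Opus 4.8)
The plan is to reduce the statement to the classical structure theory of bounded modules over a PID, which transfers with no essential change once we recall (as observed in the Introduction) that every abelian $R$-group may be regarded as an $R$-module. First I would reduce to the primary case. Since $G$ is bounded, $Ann(G) = \langle \gl \rangle$ for some $0 \neq \gl \in R$; writing $\gl = \gp_{1}^{e_{1}} \cdots \gp_{k}^{e_{k}}$ with $\gp_{1}, \ldots, \gp_{k}$ pairwise non-associate primes (possible since a PID is a UFD), Theorem~\ref{t:DecompositionTheorem} applied to the $R$-torsion group $G$ gives $G = \prod_{\gp} \gt_{\gp}(G)$, and because $G$ is killed by $\gl$ only the components $\gt_{\gp_{1}}(G), \ldots, \gt_{\gp_{k}}(G)$ are nontrivial, each $\gt_{\gp_{i}}(G)$ being bounded by $\gp_{i}^{e_{i}}$. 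A direct product of direct products of cyclic $R$-groups is again of that form, so it suffices to treat the case in which $G$ is $\gp$-primary with $Ann(G) = \langle \gp^{n} \rangle$ for a single prime $\gp \in R$ and some $n \geq 1$.

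In this case I would induct on $n$. For $n = 1$, every nonzero element of $G$ has annihilator $\langle \gp \rangle$, so $G$ is a vector space over the field $R/\langle \gp \rangle$; choosing a basis $\{g_{i}\}_{i \in I}$ exhibits $G$ as the direct product $\prod_{i \in I} gp_{R}(g_{i})$ of cyclic $R$-groups of order $\gp$. For $n > 1$, consider $G^{\gp} = gp_{R}(g^{\gp} \mid g \in G)$: it is $\gp$-primary and bounded by $\gp^{n-1}$, so by the inductive hypothesis $G^{\gp} = \prod_{i \in I} gp_{R}(h_{i})$ with each $h_{i} \neq 1$ of order $\gp^{m_{i}}$, where $1 \leq m_{i} \leq n-1$. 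Pick $g_{i} \in G$ with $g_{i}^{\gp} = h_{i}$; then $g_{i}$ has order $\gp^{m_{i}+1}$. The socle $G[\gp] = \{ g \in G \mid g^{\gp} = 1 \}$ is an $R$-subgroup annihilated by $\gp$, hence an $R/\langle \gp \rangle$-vector space, and the elements $g_{i}^{\gp^{m_{i}}} = h_{i}^{\gp^{m_{i}-1}}$ are linearly independent over $R/\langle \gp \rangle$ because the $gp_{R}(h_{i})$ are independent. I would extend these to a basis of $G[\gp]$ by adjoining vectors $\{c_{j}\}_{j \in J}$ and then prove that
\[
G = \Big( \prod_{i \in I} gp_{R}(g_{i}) \Big) \times \Big( \prod_{j \in J} gp_{R}(c_{j}) \Big),
\]
which completes the induction since each $gp_{R}(g_{i})$ and each $gp_{R}(c_{j})$ is cyclic.

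The main obstacle is the verification of this last displayed decomposition — the Pr\"ufer--Baer argument carried out for $R$-modules. It has two parts. For directness, if an element $\prod_{i} g_{i}^{\ga_{i}}$ of $\prod_{i} gp_{R}(g_{i})$ (a finite product) has order dividing $\gp$, then $\prod_{i} g_{i}^{\gp \ga_{i}} = 1$ with the $g_{i}^{\gp} = h_{i}$ independent, forcing $\gp \ga_{i} \equiv 0$ modulo $\gp^{m_{i}+1}$, hence $\ga_{i} \equiv 0$ modulo $\gp^{m_{i}}$, so $\prod_{i} g_{i}^{\ga_{i}}$ lies in the span of the $g_{i}^{\gp^{m_{i}}}$ inside $G[\gp]$; combined with the independence of the $g_{i}^{\gp^{m_{i}}}$ and the $c_{j}$ this yields that the whole product is direct. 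For exhaustion, given $g \in G$ one has $g^{\gp} \in G^{\gp} = \prod_{i} gp_{R}(g_{i}^{\gp})$, so $g^{\gp} = \big(\prod_{i} g_{i}^{\ga_{i}}\big)^{\gp}$ for suitable $\ga_{i}$ (finitely many nonzero), whence $g \big(\prod_{i} g_{i}^{\ga_{i}}\big)^{-1} \in G[\gp]$, which lies in $\big(\prod_{i} gp_{R}(g_{i})\big) \cdot \big(\prod_{j} gp_{R}(c_{j})\big)$ by choice of the $c_{j}$.

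I expect this Pr\"ufer--Baer bookkeeping to be the only delicate step: the prime-power order counting and the compatibility of the choices (the lifts $g_{i}$ of $h_{i}$, and the extension of the socle basis) must be handled carefully, whereas everything outside it is either a citation (Theorem~\ref{t:DecompositionTheorem}) or linear algebra over the field $R/\langle \gp \rangle$. One should also note that because all direct products in sight are restricted, every element has only finitely many nontrivial coordinates, so the argument involves only finite sums and no limiting or Zorn-type subtleties beyond the existence of vector space bases used silently in the base case and in choosing the $\{c_{j}\}_{j \in J}$.
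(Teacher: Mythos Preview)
The paper does not give its own proof of this theorem: it is stated as well known for modules with a bare citation to Bourbaki, so there is nothing in the paper to compare against beyond that deferral. Your proposal is correct and is precisely the classical Pr\"ufer--Baer argument one finds in such references---reduce to the $\gp$-primary case via Theorem~\ref{t:DecompositionTheorem}, then induct on the exponent by decomposing $G^{\gp}$, lifting generators, and completing a socle basis---so you are supplying exactly the proof the paper outsources.
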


\vspace{.1in}

An important result due to N. Blackburn states that if $g$ and $h$ are elements in a nilpotent group and $p$ is a prime, then for any $n \in \N$ the product  $g^{p^{n}}h^{p^{n}}$ is a $p^{m}$th power of some element of the group and some $m$ (see \cite{clement_majewicz_zyman-2017} or \cite{warfield} for details). A related result is:

\begin{thm}\cite{majewicz_and_zyman-2010}\label{t:divisible1}
Suppose $R$ contains $\Q,$ and let $G$ be a \nilR. If $\gb \in R$ and $g_{1}, \ \ldots, \ g_{m} \in G,$ then there exists $h \in G$
such that
\[
g_{1}^{\gb} \cdots g_{m}^{\gb} = h^{\gb}.
\]
\end{thm}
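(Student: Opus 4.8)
The engine of the proof is a single divisibility observation that is available precisely because $\Q\subseteq R$: for every integer $j\ge 2$,
\[
\binom{\gb}{j}\;=\;\frac{\gb}{j}\binom{\gb-1}{j-1}\;=\;\gb\,c_j,\qquad c_j:=\frac{1}{j}\binom{\gb-1}{j-1}\in R,
\]
where $\binom{\gb-1}{j-1}\in R$ because $R$ is binomial and $\tfrac1j\in\Q\subseteq R$. Hence, in the Hall--Petresco expansion provided by condition (3) in the definition of a \nilR, every factor $\gt_j(\bar g)^{\binom{\gb}{j}}$ with $j\ge2$ is itself a $\gb$-th power, namely $\bigl(\gt_j(\bar g)^{c_j}\bigr)^{\gb}$. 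So applying condition (3) to $g_1,\dots,g_m$ with exponent $\gb$ turns a product of $m$ $\gb$-th powers into a new product of $\gb$-th powers whose ``tail'' lies deep in the lower central series of $gp_R(g_1,\dots,g_m)$, and iterating drives the tail down to $1$.

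To run the iteration cleanly we may assume $G=gp_R(g_1,\dots,g_m)$, so $G$ is nilpotent of some class $c$; write $\gamma_1(G)=G\ge\gamma_2(G)\ge\cdots\ge\gamma_{c+1}(G)=1$ for its lower central series, each term being an $R$-subgroup of $G$ with $[\gamma_p(G),\gamma_q(G)]\subseteq\gamma_{p+q}(G)$. For $1\le r\le c$ consider the statement $T(r)$: \emph{for every $a\in G$, every $t\ge0$, and all $b_1,\dots,b_t\in\gamma_r(G)$, the product $a^{\gb}b_1^{\gb}\cdots b_t^{\gb}$ equals $h^{\gb}$ for some $h\in G$.} The theorem is exactly $T(1)$ (take $a=g_1$ and $b_i=g_{i+1}$), and I would prove $T(r)$ by downward induction on $r$. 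The base case $T(c)$ is immediate: the $b_i$ are then central, so $b_1^{\gb}\cdots b_t^{\gb}=(b_1\cdots b_t)^{\gb}$ and then $a^{\gb}(b_1\cdots b_t)^{\gb}=(ab_1\cdots b_t)^{\gb}$, each step being the instance of condition (3) for an abelian $R$-subgroup, where the Hall--Petresco word $\gt_1$ is just the ordinary product.

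For the inductive step, assume $T(r+1)$ and take $a\in G$, $b_1,\dots,b_t\in\gamma_r(G)$ with $t\ge1$. Applying condition (3) to $a,b_1,\dots,b_t$ with exponent $\gb$ and replacing each $\binom{\gb}{j}$ by $\gb c_j$ as above (padding with $\gt_j=1$ if necessary) gives
\[
a^{\gb}b_1^{\gb}\cdots b_t^{\gb}\;=\;\gt_1^{\gb}\,\bigl(\gt_2^{\,c_2}\bigr)^{\gb}\cdots\bigl(\gt_c^{\,c_c}\bigr)^{\gb},
\]
with $\gt_1=ab_1\cdots b_t\in G$ and $\gt_j=\gt_j(a,b_1,\dots,b_t)$. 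Each $\gt_j$ with $j\ge2$ lies in $\gamma_j(\langle a,b_1,\dots,b_t\rangle)$, hence is a product of commutators of weight $\ge2$ in $a,b_1,\dots,b_t$; since no commutator of weight $\ge2$ can be built from the single variable $a$ alone, each of these commutators involves at least one $b_i\in\gamma_r(G)$ and therefore lies in $\gamma_{r+1}(G)$, so $\gt_j^{\,c_j}\in\gamma_{r+1}(G)$. Thus the right-hand side displayed above is a product of $\gb$-th powers whose first base, $\gt_1$, is an (arbitrary) element of $G$ and whose remaining bases lie in $\gamma_{r+1}(G)$; so $T(r+1)$ applies and supplies the desired $h\in G$. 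Descending to $r=1$ completes the argument.

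The only non-routine ingredients are the divisibility identity $\binom{\gb}{j}=\gb c_j$ in $R$ --- the single place where $\Q\subseteq R$ enters, and precisely what makes the whole scheme possible --- together with two standard facts about Hall--Petresco words used in the step: that $\gt_j(\bar g)$ lies in the $j$-th lower central term of the (ordinary) group generated by $\bar g$, and that every commutator appearing in $\gt_j$ for $j\ge2$ actually involves some argument other than the first. I would quote these, along with ``$\gamma_i(G)$ is an $R$-subgroup'' and the bracket containment $[\gamma_p(G),\gamma_q(G)]\subseteq\gamma_{p+q}(G)$, from the $R$-powered group theory (see \cite{hall-1969}) rather than reprove them, so I do not anticipate a genuine obstacle; the one point demanding care is that the exponent in condition (3) is the fixed ring element $\gb$, so that the collection formula and the identity $\binom{\gb}{j}=\gb c_j$ are used with $\gb$ at every stage and the tail really does remain a product of honest $\gb$-th powers throughout the descent. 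Conceptually the statement is also immediate from the Mal'cev correspondence, since $G$ is $\Q$-powered and hence matched with a nilpotent Lie algebra $L$ in which $g^{\gb}$ corresponds to $\gb\log g$, while the Baker--Campbell--Hausdorff series evaluated at $\gb\log g_1,\dots,\gb\log g_m$ is $\gb$ times an element of $L$ because its degree-$d$ part scales by $\gb^{d}=\gb\cdot\gb^{d-1}$ with rational coefficients; but the Hall--Petresco argument above stays inside the framework of the paper.
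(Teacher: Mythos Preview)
The paper does not give its own proof of this theorem: it is quoted from \cite{majewicz_and_zyman-2010} in the preliminaries and used as a black box. So there is nothing in the present paper to compare your argument against.

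That said, your proof is correct and is essentially the standard one. The key identity $\binom{\gb}{j}=\gb\cdot\frac{1}{j}\binom{\gb-1}{j-1}$ with $\frac{1}{j}\in\Q\subseteq R$ is exactly the point where the hypothesis enters, and your downward induction along the lower central series via Hall--Petresco is the natural mechanism. The step ``$\gt_j(a,b_1,\dots,b_t)\in\gamma_{r+1}(G)$ for $j\ge 2$ when $b_i\in\gamma_r(G)$'' is justified as you say: any weight-$\ge 2$ commutator in $a,b_1,\dots,b_t$ that is nontrivial must involve some $b_i$, and then $[\gamma_p(G),\gamma_q(G)]\subseteq\gamma_{p+q}(G)$ pushes it into $\gamma_{r+1}(G)$. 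The only place to be explicit is that you need $\gamma_{r+1}(G)$ to be an $R$-subgroup in order to conclude $\gt_j^{\,c_j}\in\gamma_{r+1}(G)$; this is where $\Q\subseteq R$ is used a second time (so that the ordinary lower central subgroups are already $R$-closed), and it is indeed a standard fact you may cite from \cite{hall-1969} or \cite{warfield}. Your remark about the Mal'cev/BCH viewpoint is also correct and gives a one-line conceptual proof, but the Hall--Petresco argument fits the paper's framework better.
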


\vspace{.075in}

Hence, every element in $G^{\gb}$ is a $\gb$th power of an element of $G.$

\vspace{.1in}

\begin{defn}\label{d:Divisible Module}
Let $G$ be an abelian $R$-group.
\begin{itemize}
\item $G$ is called \emph{$R$-divisible} if for every $g \in G$ and $0 \neq \ga \in R,$ there exists $h \in G$ such that $g = h^{\ga}.$ We say that $h$ is an \emph{$\ga$th root} of $g.$

\vspace{.05in}

\item If $0 \neq \gb \in R,$ then $G$ is \emph{$\gb$-divisible} if every element of $G$ has a $\gb$th root.
\end{itemize}
\end{defn}

Any $R$-quotient of an $R$-divisible group is $R$-divisible whenever $R$ is a domain. Furthermore, the direct product of a collection of $R$-divisible ($\gb$-divisible) groups is $R$-divisible ($\gb$-divisible).

\vspace{.1in}

It is easy to show that the $p$-quasicyclic group is divisible when $p$ is an ordinary prime (see Lemma 5.25 in \cite{clement_majewicz_zyman-2017}). The same proof gives:

\begin{lem} \label{l:QuasiDivisible}
Let $R$ be a PID and $\pi$ a prime in $R$. The $\pi$-quasicyclic $R$-group is $R$-divisible.
\end{lem}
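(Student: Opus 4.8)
The plan is to exploit the explicit presentation of $Q(\gp^{\infty})$ to reduce an arbitrary element to a single generator, and then solve the equation $h^{\ga}=g$ by separating $\ga$ into its $\gp$-part and a part coprime to $\gp$.

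First I would record the structure of $Q(\gp^{\infty})$. From the defining relations ($g_{1}^{\gp}=1$ and $g_{i}^{\gp}=g_{i-1}$ for $i\geq 2$), a telescoping computation gives $g_{m}^{\gp^{m}}=1$ for every $m$, and more generally $g_{m+k}^{\gp^{k}}=g_{m}$ for all $m,k\in\N$. Since $Q(\gp^{\infty})$ is an abelian $R$-group $R$-generated by the $g_{i}$, every element is a finite product $g_{n_{1}}^{\gb_{1}}\cdots g_{n_{r}}^{\gb_{r}}$; rewriting each factor with $n:=\max_{j}n_{j}$ via $g_{n_{j}}^{\gb_{j}}=g_{n}^{\gp^{\,n-n_{j}}\gb_{j}}$ and collecting exponents (here one uses $(xy)^{\ga}=x^{\ga}y^{\ga}$, valid in abelian $R$-groups) shows that every element of $Q(\gp^{\infty})$ has the form $g_{n}^{\gb}$ for some $n\in\N$ and $\gb\in R$. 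In particular $Q(\gp^{\infty})=\bigcup_{n}gp_{R}(g_{n})$ is an ascending union of cyclic $R$-groups.

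Now fix $g=g_{n}^{\gb}\in Q(\gp^{\infty})$ and $0\neq\ga\in R$. Using that the PID $R$ is a UFD, write $\ga=\gp^{k}\gm$ with $k\geq 0$ and $\gp\nmid\gm$. Set $h_{1}:=g_{n+k}^{\gb}$; then $h_{1}^{\gp^{k}}=(g_{n+k}^{\gp^{k}})^{\gb}=g_{n}^{\gb}=g$, so it suffices to extract a $\gm$th root of $h_{1}$. Because $\gp\nmid\gm$, the elements $\gm$ and $\gp^{n+k}$ are coprime in $R$, so there are $u,v\in R$ with $u\gm+v\gp^{n+k}=1$. Put $h:=g_{n+k}^{u\gb}$. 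Using $g_{n+k}^{\gp^{n+k}}=1$,
\[
h^{\gm}=g_{n+k}^{u\gm\gb}=g_{n+k}^{(1-v\gp^{n+k})\gb}=g_{n+k}^{\gb}\big(g_{n+k}^{\gp^{n+k}}\big)^{-v\gb}=g_{n+k}^{\gb}=h_{1},
\]
whence $h^{\ga}=h^{\gp^{k}\gm}=(h^{\gm})^{\gp^{k}}=h_{1}^{\gp^{k}}=g$. Since $g$ and $\ga$ were arbitrary, $Q(\gp^{\infty})$ is $R$-divisible.

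There is no real obstacle in this argument: the only points requiring care are the reduction of an arbitrary element to the form $g_{n}^{\gb}$ and the step from "$\gp\nmid\gm$" to a B\'ezout identity $u\gm+v\gp^{n+k}=1$, which is precisely where the PID hypothesis enters; the rest is bookkeeping with the exponent laws of a \nilR.
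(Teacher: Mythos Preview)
Your argument is correct. The paper does not spell out a proof but simply notes that the classical argument for the ordinary Pr\"ufer $p$-group (Lemma~5.25 in \cite{clement_majewicz_zyman-2017}) carries over verbatim; what you have written is precisely that standard argument---reduce to a single generator $g_{n}^{\gb}$, factor $\ga=\gp^{k}\gm$ with $\gp\nmid\gm$, lift along the $\gp$-tower for the $\gp^{k}$-part, and invert $\gm$ modulo $\gp^{n+k}$ via B\'ezout for the coprime part---so your approach coincides with the one the paper invokes by reference.
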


\vspace{.13in}

The following theorem is fundamental in the theory of modules.

\begin{thm}\cite{bourbaki-1958}\label{t:Divisible Module2}
Every abelian $R$-group can be embedded in an $R$-divisible group.
\end{thm}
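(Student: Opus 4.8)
The plan is to recast the statement in module-theoretic terms. As noted in the introduction, every abelian $R$-group can be viewed as an $R$-module, so it suffices to prove that every $R$-module embeds in an $R$-divisible one; for this we only need that $R$ is an integral domain, which every binomial ring is. This is the module-theoretic analogue of the classical fact that every abelian group embeds in a divisible group (e.g.\ $\Z \hookrightarrow \Q$).

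First I would record two elementary facts. Let $K$ be the field of fractions of $R$. Any $K$-vector space $V$, regarded as an $R$-module by restriction of scalars, is $R$-divisible: for $v \in V$ and $0 \neq \ga \in R$, the vector obtained by scalar multiplication by $\ga^{-1} \in K$ is an $\ga$th root of $v$. Secondly, an $R$-quotient of an $R$-divisible group is again $R$-divisible --- the paper already records this for abelian $R$-groups, and the same holds verbatim for modules.

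Next, given an abelian $R$-group $G$, I would write $G \cong_{R} F/N$ with $F$ a free $R$-module and $N \leq_{R} F$ (every module is a quotient of a free one, e.g.\ the free module on a generating set). Since $F$ is free and $R$ is a domain, $F$ is $R$-torsion-free, so the canonical map $F \to F \otimes_{R} K$ is injective; identifying $F$ with its image in the $K$-vector space $D := F \otimes_{R} K$, we obtain $N \leq_{R} F \leq_{R} D$. The induced map
\[
G \;\cong_{R}\; F/N \;\longrightarrow\; D/N
\]
is then an injective $R$-homomorphism, and $D/N$ is $R$-divisible as an $R$-quotient of the $R$-divisible group $D$. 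This realizes $G$ as an $R$-subgroup of an $R$-divisible group, as required.

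There is no genuine obstacle here; the two points that deserve a moment's care are (i) the injectivity of $F \to F \otimes_{R} K$, which is precisely where the domain hypothesis enters, and (ii) the injectivity of $F/N \to D/N$, which holds because $N$ is already contained in $F$, so its preimage under $F \hookrightarrow D$ is again $N$. Finally, since the paper works with abelian $R$-groups in multiplicative/exponential notation, I would phrase the conclusion accordingly: an injective $R$-homomorphism $G \to D/N$ whose image lies in a group satisfying Definition~\ref{d:Divisible Module}.
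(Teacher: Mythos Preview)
Your argument is correct and is the standard proof that every module over an integral domain embeds in a divisible one: realize the module as a quotient $F/N$ of a free module, embed $F$ in the $K$-vector space $F\otimes_{R}K$, and pass to the quotient $D/N$. The two checkpoints you flag --- injectivity of $F\to F\otimes_{R}K$ (using that $R$ is a domain, so a free module is $R$-torsion-free) and injectivity of $F/N\to D/N$ (since $F\cap N=N$ inside $D$) --- are exactly the places where care is needed, and both hold as you state.

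There is nothing to compare against: the paper does not supply its own proof of this theorem but simply records it with a reference to Bourbaki \cite{bourbaki-1958}, treating it as a known module-theoretic fact. Your write-up is precisely the classical argument one finds there (or in any text on modules over a PID/domain), so in effect you have filled in what the paper leaves to the citation.
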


\vspace{.13in}

The next few results involve abelian $R$-groups that are residually $\FT.$

\begin{lem}\label{l:Residually Finite P-Type 1}
Let $R$ be a PID. Every cyclic $R$-group is residually $\FT.$
\end{lem}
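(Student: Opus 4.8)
The plan is to combine the classification of cyclic $R$-modules over a PID with the elementary observation that a quotient $R/\langle\gamma\rangle$ by a nonzero $\gamma$ is automatically of finite type. Let $G = gp_R(g)$ be a cyclic $R$-group. Since $G$ is abelian it is an $R$-module, and as a module $G \cong_R R/Ann(g)$. Because $R$ is a PID, $Ann(g) = \langle\gb\rangle$ for some $\gb \in R$, so $G \cong_R R/\langle\gb\rangle$, and I would split the argument according to whether $\gb$ is zero.

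First, suppose $\gb \neq 0$. Then $\gb$ annihilates every element of $G$, so $G$ is bounded and hence $R$-torsion; being cyclic, it is finitely $R$-generated. Therefore $G \in \FT$ already, and the identity $R$-homomorphism $G \to G$ witnesses that $G$ is residually $\FT$. (When $\gb$ is a unit this degenerates to $G = 1$, where there is nothing to prove.)

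The substantive case is $\gb = 0$, i.e. $G \cong_R R$ as an $R$-module. Here I would proceed directly: given $1 \neq g \in G$, identify $g$ with a nonzero $\ga \in R$. It suffices to produce a nonzero $\gamma \in R$ with $\gamma \nmid \ga$, for then the canonical $R$-epimorphism $R \to R/\langle\gamma\rangle$ sends $\ga$ to a nonzero element, while $R/\langle\gamma\rangle \in \FT$ because it is cyclic and annihilated by $\gamma \neq 0$ (note that $\gamma \nmid \ga$ forces $\gamma$ to be a non-unit, so this quotient is nontrivial). To manufacture $\gamma$, factor $\ga$ into primes (a PID is a UFD): if $\pi^{e}$ is the exact power of a prime $\pi$ occurring in $\ga$, take $\gamma = \pi^{e+1}$; if instead $\ga$ is a unit, take $\gamma$ to be any prime of $R$.

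The only real point requiring care is the final sub-case: when $\ga$ is a unit one needs $R$ to contain at least one prime, i.e. $R$ should not be a field. This is a genuine hypothesis rather than a technicality — if $R$ were a field, then $R$ as an $R$-module would be $R$-torsion-free and not residually $\FT$ — but it is automatic for $R = \Qx$ and, more generally, for any PID that is not a field, so the argument goes through in the setting of this paper. Everything else amounts to routine bookkeeping with annihilators and the universal property of quotient $R$-groups.
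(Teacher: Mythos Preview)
Your proof is correct and follows essentially the same two-case split as the paper's: when $g$ has bounded order $G$ is already in $\FT$, and when $g$ has infinite order one quotients by $gp_R(g^{\gamma})$ for a $\gamma$ not dividing the exponent of the given element. Your version is in fact a bit more careful --- the paper simply writes ``choose $\alpha,\beta\in R$ such that $g^{\alpha}\notin gp_R(g^{\beta})$'' without the explicit prime-power construction of $\gamma$ or the caveat that $R$ must not be a field, a hypothesis that is indeed needed for the lemma as stated.
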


\begin{proof}
Suppose that $G = gp_{R}(g)$ is a cyclic $R$-group. There are 2 cases to consider.

\vspace{.1in}

\noindent \underline{Case 1}: Assume $g$ has infinite order. Choose $\ga, \ \gb \in R$ such that $g^{\ga} \notin gp_{R}\left(g^{\gb}\right),$ and set $N = gp_{R}\left(g^{\gb}\right).$ If $g^{\gl} \in G$ for some $\gl \in R,$ then $g^{\gl}N \in G/N$ is an $R$-torsion element since
\[
\left(g^{\gl}N\right)^{\gb} =  \left(g^{\gb}N\right)^{\gl} = N.
\]
And so, $G/N \in \FT.$  Furthermore, $g^{\ga}$ is not in the kernel of the natural map $G \rightarrow G/N.$ Therefore, $G$ is residually $\FT.$

\vspace{.1in}

\noindent \underline{Case 2}: Suppose that $g$ has bounded order and choose $0 \neq \gm \in R$ such that $Ann(g) = \, <\gm>.$ Since $G \cong_{R} R/Ann(g),$ we have that
\[
G \cong_{R} \frac{gp_{R}(g)}{gp_{R}\left(g^{\gm}\right)}.
\]
Clearly, $G \in \FT.$
\end{proof}

\vspace{.1in}

If $\go$ is a set of primes in a PID $R,$ then $\go$-torsion cyclic $R$-groups are clearly residually $\FTO.$ Furthermore, if $\mathcal{Q}$ is a property of \nilR s for any $R$ and $G_{1}, G_{2}, \ \ldots$ are residually $\mathcal{Q},$ then the direct product of the $G_{i}$ is also residually $\mathcal{Q}.$  This, together with Lemma~\ref{l:Residually Finite P-Type 1}, gives:

\begin{cor}\label{c:Residually Finite P-Type 1}
Let $R$ be a PID.
\begin{enumerate}
\item A direct product of cyclic $R$-groups is residually $\FT.$

\item If $\go$ is a set of primes in $R,$ then a direct product of $\go$-torsion cyclic $R$-groups is residually $\FTO.$
\end{enumerate}
\end{cor}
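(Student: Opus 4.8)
The plan is to deduce both parts from Lemma~\ref{l:Residually Finite P-Type 1} together with the elementary closure property recorded in the paragraph preceding the statement: for any property $\mathcal{Q}$ of \nilR s, a (restricted) direct product of residually $\mathcal{Q}$ groups is again residually $\mathcal{Q}$. First I would verify this closure property. Let $G = \prod_{i} G_{i}$ with each $G_{i}$ residually $\mathcal{Q}$, and let $1 \neq g \in G$. Since the direct product is restricted, $g$ has only finitely many non-trivial coordinates, so we may pick an index $i_{0}$ with $g_{i_{0}} \neq 1$. The canonical projection $\gp_{i_{0}} \colon G \rightarrow G_{i_{0}}$ is an $R$-epimorphism, and by hypothesis there is an $R$-epimorphism $\gy \colon G_{i_{0}} \rightarrow K$ with $K \in \mathcal{Q}$ and $\gy(g_{i_{0}}) \neq 1$. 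Then $\gy \circ \gp_{i_{0}}$ is an $R$-epimorphism of $G$ onto $K \in \mathcal{Q}$ that does not annihilate $g$; hence $G$ is residually $\mathcal{Q}$.

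For part (1), each cyclic $R$-group is residually $\FT$ by Lemma~\ref{l:Residually Finite P-Type 1}, so the closure property applied with $\mathcal{Q} = \FT$ yields the claim. For part (2), I would first observe that an $\go$-torsion cyclic $R$-group $G = gp_{R}(g)$ already lies in $\FTO$, and is therefore trivially residually $\FTO$. Indeed, if $g = 1$ this is clear; otherwise $g$ is an $\go$-torsion element, so $g^{\ga} = 1$ for some $\go$-member $\ga$, whence $Ann(g)$ is a non-zero ideal of the PID $R$, say $Ann(g) = <\gb>$ with $\gb \mid \ga$. By unique factorization in $R$ and the convention that no two elements of $\go$ are associates, every prime divisor of $\gb$ is a divisor of $\ga$ and hence has an associate in $\go$; thus $\gb$ is (up to a unit) an $\go$-member, and $G \cong_{R} R/<\gb>$ is a finitely $R$-generated $\go$-torsion group, i.e.\ of finite $\go$-type. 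The closure property with $\mathcal{Q} = \FTO$ then completes the proof.

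There is essentially no obstacle here: the only point requiring a little care is confirming that a divisor of an $\go$-member is again an $\go$-member up to associates, which is immediate from unique factorization, and checking that the restricted direct product interacts correctly with the coordinate projections, which it does since these projections are $R$-homomorphisms.
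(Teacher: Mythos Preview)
Your proposal is correct and follows essentially the same route as the paper: the paper's argument (recorded in the paragraph immediately preceding the corollary) is precisely that $\go$-torsion cyclic $R$-groups are residually $\FTO$, that restricted direct products preserve the property of being residually $\mathcal{Q}$, and that these facts together with Lemma~\ref{l:Residually Finite P-Type 1} yield both parts. You have simply written out the details the paper leaves implicit.
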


\begin{thm}\label{t:Malcev}
Let $R$ be a PID. An abelian $R$-group $G$ is residually $\FT$ if and only if it does not contain non-trivial elements with $\gb$th roots for all $\gb \in R.$
\end{thm}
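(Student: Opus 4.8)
The plan is to prove both implications by working with the notion that an abelian $R$-group $G$ is residually $\FT$ precisely when we can ``detect'' every nontrivial element in a finite $\go$-type (indeed finite type) quotient, and to connect this to the existence of elements that are infinitely divisible. Let me call an element $g \in G$ \emph{universally divisible} if for every $0 \neq \gb \in R$ there is $h \in G$ with $h^{\gb} = g$; the statement asserts $G$ is residually $\FT$ iff the only universally divisible element is $1$.

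For the forward direction, suppose $1 \neq g \in G$ is universally divisible, and let $\gfi : G \to K$ be any $R$-homomorphism onto $K \in \FT$. By Corollary~\ref{c:MAX} (with $R$ a PID, hence a noetherian domain), $K$ is finitely $R$-generated and $R$-torsion, so by the Examples following Definition~\ref{d:EXPONENT}, $K$ is bounded; let $0 \neq \gl \in R$ with $Ann(K) = \langle \gl \rangle$. Then $\gfi(g) = \gfi(h^{\gl}) = \gfi(h)^{\gl} = 1$ for a suitable $\gl$th root $h$ of $g$. Since $\gfi$ was arbitrary, $g$ lies in every such kernel, so $G$ is not residually $\FT$. (Contrapositively: residually $\FT$ forces no nontrivial universally divisible element.)

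For the converse, assume $G$ has no nontrivial universally divisible element; I want to show $G$ is residually $\FT$. Fix $1 \neq g \in G$. Since $g$ is not universally divisible, there is some $0 \neq \gb \in R$ such that $g$ has no $\gb$th root in $G$. Consider the quotient $G/G^{\gb}$, which is bounded (it is killed by $\gb$), hence by Theorem~\ref{t:Bounded Module} a direct product of cyclic $R$-groups, and hence residually $\FT$ by Corollary~\ref{c:Residually Finite P-Type 1}. The image $gG^{\gb}$ is nontrivial in $G/G^{\gb}$ exactly because $g \notin G^{\gb}$ — and here I use Theorem~\ref{t:divisible1} (valid since $R = \Qx \supseteq \Q$, or whatever ambient ring the paper is using): every element of $G^{\gb}$ is a $\gb$th power of a \emph{single} element of $G$, so $g \in G^{\gb}$ would give $g$ a $\gb$th root, contradicting the choice of $\gb$. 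Therefore there is an $R$-homomorphism from $G/G^{\gb}$ onto some $K \in \FT$ not killing $gG^{\gb}$; composing with the natural projection $G \to G/G^{\gb}$ gives an $R$-homomorphism $G \to K$ not killing $g$. Hence $G$ is residually $\FT$.

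The main obstacle — and the step that needs the hypothesis $\Q \subseteq R$ — is the passage $g \notin G^{\gb} \iff g$ has no $\gb$th root: in a general nilpotent $R$-powered group $G^{\gb}$ is the $R$-subgroup \emph{generated} by the $\gb$th powers, so membership in it need not be witnessed by a single root, and the equivalence can fail. Theorem~\ref{t:divisible1} is exactly what rescues this, because $G$ here is abelian — the Hall–Petresco machinery trivializes. One should also be slightly careful in the forward direction that ``$K \in \FT$ is bounded'' genuinely requires finite $R$-generation plus noetherianness of $R$ (both available), since an arbitrary $R$-torsion group need not be bounded, as the quasicyclic example $Q(\gp^\infty)$ shows. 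Modulo these points the argument is routine.
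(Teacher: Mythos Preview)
Your proof is correct and follows the same route as the paper's: for one direction pass to the bounded quotient $G/G^{\gb}$, decompose it as a direct product of cyclic $R$-groups (Theorem~\ref{t:Bounded Module}) and invoke Corollary~\ref{c:Residually Finite P-Type 1}; for the other, use that every $K \in \FT$ is bounded to kill the image of a universally divisible element. One unnecessary worry: since $G$ is \emph{abelian}, the set $\{h^{\gb} : h \in G\}$ is already an $R$-subgroup (closed under products, inverses, and $R$-powers because $(h_1h_2)^{\gb} = h_1^{\gb}h_2^{\gb}$ and $(h^{\gb})^{\ga} = (h^{\ga})^{\gb}$), so $G^{\gb}$ equals this set for \emph{any} binomial ring $R$ --- you do not need Theorem~\ref{t:divisible1} or the hypothesis $\Q \subseteq R$, and the paper's argument goes through for an arbitrary PID exactly as stated.
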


\begin{proof}
Suppose that $g \in G$ ($g \neq 1$) has no $\ga$th root for some $\ga \in R,$ so that $g \notin G^{\ga}.$ Clearly, $G/G^{\ga}$ is bounded. By Theorem~\ref{t:Bounded Module}, $G/G^{\ga}$ is a direct product of cyclic $R$-subgroups. Hence, $G/G^{\ga}$ is residually $\FT$ by Corollary~\ref{c:Residually Finite P-Type 1}. Therefore, there exists a normal $R$-subgroup $N/G^{\ga}$ of $G/G^{\ga}$ such that $(G/G^{\ga})/(N/G^{\ga})$ is finite type and $g \notin N.$ And so, $G$ is residually $\FT.$ The converse follows immediately from the next lemma.
\end{proof}

\begin{lem}
If $G$ is a non-trivial $R$-divisible group and $H \in \FT,$ then the only $R$-homomorphism from $G$ to $H$ is the trivial one.
\end{lem}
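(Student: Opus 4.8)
The plan is to combine the boundedness of finite-type groups with the defining property of divisibility; once these are lined up the argument is essentially a one-liner.

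First I would record that, since $H \in \FT$ and $R$ is a PID (as throughout the present subsection), $H$ is bounded: as noted in the examples following Definition~\ref{d:EXPONENT} (cf. \cite{majewicz_and_zyman-2009}), a \nilR\ of finite type has every element of bounded order and is itself bounded, so $Ann(H) \neq \{0\}$. Fix a nonzero $\gl \in Ann(H)$, i.e.\ $h^{\gl} = 1$ for all $h \in H$.

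Next, let $\gfi : G \to H$ be an arbitrary $R$-homomorphism and let $g \in G$. Because $G$ is $R$-divisible it is in particular $\gl$-divisible, so we may write $g = h^{\gl}$ for some $h \in G$. Then
\[
\gfi(g) = \gfi\!\left(h^{\gl}\right) = \gfi(h)^{\gl} = 1,
\]
where the middle equality uses that $\gfi$ respects the $R$-action and the last uses $\gl \in Ann(H)$ together with $\gfi(h) \in H$. Since $g \in G$ was arbitrary, $\gfi$ is the trivial homomorphism.

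I do not expect any genuine obstacle here: the only step that is not entirely formal is the implication ``$H \in \FT \Rightarrow Ann(H) \neq \{0\}$,'' and this is exactly the boundedness statement quoted above. It is worth observing that the hypothesis ``$G$ non-trivial'' plays no role in the argument (it merely flags the non-vacuous case), and that $H$ is allowed to be non-abelian of arbitrary nilpotency class, since the single element $\gl$ annihilates all of $H$; in particular no induction on nilpotency class or on a generating set of $H$ is needed.
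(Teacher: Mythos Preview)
Your proof is correct and follows essentially the same approach as the paper's: fix a nonzero $\gl$ annihilating $H$ (using that finite-type groups are bounded), write an arbitrary $g\in G$ as $h^{\gl}$ by divisibility, and conclude $\gfi(g)=\gfi(h)^{\gl}=1$. Your additional observations that the ``non-trivial'' hypothesis on $G$ is inessential and that no structural induction on $H$ is needed are accurate but incidental.
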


\begin{proof}
Since $H$ is finite type, it has bounded exponent. Let $\gl$ be the exponent of $H,$ and let $g \in G.$ Since $G$ is $R$-divisible, there exists $h \in G$ such that $g = h^{\gl}.$ If $\gfi : G \rightarrow H$ is an $R$-homomorphism, then
\[
\gfi(g) = \gfi\left(h^{\gl}\right) = (\gfi(h))^{\gl} = 1.
\]
And so, $\gfi$ must be the trivial $R$-homomorphism.
\end{proof}

\begin{defn}\cite{majewicz_and_zyman-2012}\label{d:go-isolated}
Let $R$ be a UFD and $\go$ a set of primes in $R.$ An $R$-subgroup $H$ of a \nilR \, $G$ is \emph{$\go$-isolated} in $G$ if $g \in G$ and $g^{\gl} \in H$ for some $\go$-member $\gl$ imply that $g \in H.$
\end{defn}

``$R$-isolated" is defined in the obvious way and does not require a UFD. The next result is obvious.

\begin{lem}\label{l:torsionfree and isolated}
Let $R$ be a UFD and $G$ a \nilR. If $H \unlhd_{R} G,$ then $H$ is $\go$-isolated in $G$ if and only if $G/H$ is $\go$-torsion-free.
\end{lem}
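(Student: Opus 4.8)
The statement to prove is Lemma~\ref{l:torsionfree and isolated}: for a UFD $R$ and a \nilR\ $G$ with $H \unlhd_R G$, the $R$-subgroup $H$ is $\go$-isolated in $G$ if and only if $G/H$ is $\go$-torsion-free.

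=== BEGIN PROOF PROPOSAL ===

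\begin{proof}[Proof proposal]
The plan is to unwind both definitions and observe that the two conditions are literal translations of each other under the natural quotient map $\gp \colon G \to G/H$. First I would prove the forward direction: assume $H$ is $\go$-isolated in $G$, and let $gH \in G/H$ be an $\go$-torsion element, say $(gH)^{\gl} = H$ for some $\go$-member $\gl$. Since the $R$-action on $G/H$ is induced coordinatewise, $(gH)^{\gl} = g^{\gl}H$, so $g^{\gl}H = H$, i.e. $g^{\gl} \in H$. By $\go$-isolatedness of $H$ this forces $g \in H$, hence $gH = H$ is trivial in $G/H$. Thus the only $\go$-torsion element of $G/H$ is the identity, which is exactly the assertion that $G/H$ is $\go$-torsion-free (Definition~\ref{d:TORSION}).

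For the converse, assume $G/H$ is $\go$-torsion-free. Suppose $g \in G$ and $g^{\gl} \in H$ for some $\go$-member $\gl$. Passing to the quotient, $(gH)^{\gl} = g^{\gl}H = H$, so $gH$ is an $\go$-torsion element of $G/H$; by hypothesis $gH = H$, i.e. $g \in H$. Hence $H$ is $\go$-isolated in $G$, completing the proof.

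I do not anticipate any genuine obstacle here: the only points requiring care are that $H \unlhd_R G$ is needed so that $G/H$ is a \nilR\ with the induced $R$-action (so that $(gH)^{\gl} = g^{\gl}H$ makes sense and Definition~\ref{d:TORSION} applies to $G/H$), and that an $\go$-member $\gl$ is by definition a nonzero non-unit whose prime divisors have associates in $\go$, so the same $\gl$ witnesses $\go$-torsion on both sides of the map. Everything else is a direct application of the definitions of $\go$-isolated (Definition~\ref{d:go-isolated}) and $\go$-torsion-free, as the statement in the excerpt already flags (``The next result is obvious'').
\end{proof}

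=== END PROOF PROPOSAL ===
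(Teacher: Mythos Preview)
Your proof is correct and is exactly the definition-unwinding the paper has in mind; the paper in fact gives no proof at all, merely remarking ``The next result is obvious.'' There is nothing to add or compare.
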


\vspace{.1in}

Yet another concept that we will need is the height of a group element.

\begin{defn}\label{d:Height}
Let $R$ be a PID and $\gp$ a prime in $R.$ Let $G$ be a $\gp$-torsion abelian $R$-group. An element $g \in G$ has \emph{height} $n$ if $g$ has a $\gp^{n}$th root in $G$ but no $\gp^{n + 1}$th root in $G.$ If $g$ has a $\gp^{k}$th root for all $k \in \N,$ then $g$ is said to have \emph{infinite height}.
\end{defn}

We need to know when an abelian $R$-group can be expressed as a direct product of cyclic $R$-groups. The next theorem provides two instances of when this can be done. It is a generalization of well-known results due to Pr\"{u}fer and Kulikov.

\begin{thm}\cite{bourbaki-1958}\label{t:Bounded1}
Let $R$ be a PID and $\gp$ a prime in $R.$
\begin{enumerate}
\item A $\gp$-torsion abelian $R$-group $G$ is a direct product of cyclic $R$-subgroups if and only if $G$ is the union of an ascending sequence
\[
G_{1} \subseteq G_{2} \subseteq \cdots \subseteq G_{n} \subseteq \cdots
\]
of $R$-subgroups, such that the heights in $G$ of the non-identity elements in $G_{n}$ are bounded.
\item A countably $R$-generated $\gp$-torsion abelian $R$-group without elements of infinite height is a direct product of cyclic $R$-subgroups.
\end{enumerate}
\end{thm}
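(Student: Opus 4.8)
The statement is the Pr\"{u}fer--Kulikov structure theorem for $\gp$-primary $R$-modules, and the plan is to adapt its classical proof; the only substantive module-theoretic input beyond bookkeeping is Theorem~\ref{t:Bounded Module}, the bounded case. I would establish the two implications of (1) separately and then deduce (2) from the ``if'' direction of (1). \emph{For the easy implication of (1):} suppose $G = \prod_{i \in I} C_i$ with each $C_i$ a non-trivial cyclic $R$-group. Since $G$ is $\gp$-torsion, $C_i \cong_R R/(\gp^{k_i})$ with $k_i \geq 1$. Set $G_m = \prod_{i\,:\, k_i \leq m} C_i$. Then $G_1 \subseteq G_2 \subseteq \cdots$, and $\bigcup_m G_m = G$ because every element of a restricted product has finite support. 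For $1 \neq x = (x_i) \in G_m$, taking $\gp^{j}$-th roots coordinatewise shows that the height of $x$ in $G$ equals $\min\{\,\operatorname{ht}_{C_i}(x_i) : x_i \neq 1\,\}$, and in $R/(\gp^{k_i})$ the largest finite height of a non-zero element is $k_i - 1 \leq m - 1$. So $(G_m)$ witnesses the stated condition.

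\emph{For the ``if'' direction of (1):} let $G = \bigcup_n G_n$ with the heights in $G$ of the non-identity elements of $G_n$ bounded by $f(n)$. First I would normalize. Replacing $G_n$ by $G_n \cap \{x \in G : x^{\gp^n} = 1\}$ preserves the hypotheses and makes every $G_n$ bounded; replacing $f$ by $n \mapsto \max_{m \le n} f(m)$ makes $f$ non-decreasing; if $f$ is then bounded, $G$ itself is bounded and we are done by Theorem~\ref{t:Bounded Module}; otherwise, passing to the coarser chain $H_m = G_{n(m)}$ with $n(m) = \max\{n : f(n) \le m\}$ (and $H_m = 1$ for small $m$) still exhausts $G$ and has its heights on $H_m$ bounded by $m$. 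So we may assume each $G_n$ is bounded and the heights on $G_n$ are $\le n$. Now I would run Kulikov's inductive construction: build an ascending chain $1 = B_0 \subseteq B_1 \subseteq B_2 \subseteq \cdots$ of bounded $R$-subgroups with $G_n \le B_n$ for all $n$, together with fixed decompositions of the $B_n$ into restricted direct products of cyclic $R$-groups that are compatible, in the sense that the decomposition of $B_n$ refines into that of $B_{n-1}$ together with that of an internal complement $D_n$, so $B_n = B_{n-1} \times D_n$. The inductive step rests on the fact that a pure $R$-subgroup of a bounded abelian $R$-group is a direct factor whose complement is again a restricted direct product of cyclic $R$-groups --- this is where Theorem~\ref{t:Bounded Module} is invoked --- while the height bound on $G_n$ is what keeps the relevant purified subgroup containing $G_n$ bounded. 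Granting this, $G = \bigcup_n B_n = \prod_{n \ge 1} D_n$ is a restricted direct product of restricted direct products of cyclic $R$-groups, hence is itself one.

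\emph{To deduce (2):} let $\{g_1, g_2, \ldots\}$ be a countable $R$-generating set of $G$ and put $G_n = gp_R(g_1, \ldots, g_n)$; this is an ascending chain of finitely $R$-generated $R$-subgroups with union $G$, and by the structure theorem over the PID $R$ each $G_n \cong_R \bigoplus_{j=1}^{m_n} R/(\gp^{k_j})$, so $G_n[\gp] := \{x \in G_n : x^{\gp} = 1\}$ is a finite-dimensional vector space over the field $R/(\gp)$. The descending chain $G_n[\gp] \supseteq G_n[\gp] \cap G^{\gp} \supseteq G_n[\gp] \cap G^{\gp^{2}} \supseteq \cdots$ therefore stabilizes, and its stable value is trivial: a non-identity element of it would lie in $G^{\gp^{k}}$ for every $k$ and so have infinite height. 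Hence $G_n[\gp] \cap G^{\gp^{N_n}} = \{1\}$ for some $N_n$; and if $1 \neq x \in G_n$ has order $\gp^{t}$ and height $h$ in $G$, then $x^{\gp^{t-1}}$ is a non-identity element of $G_n[\gp] \cap G^{\gp^{h+t-1}}$, which forces $h < N_n$. So the heights on $G_n$ are bounded and the ``if'' direction of (1) applies.

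\emph{The main obstacle} is the inductive step of the ``if'' direction: ensuring that the $R$-subgroup containing $G_n$ that gets split off remains bounded, and that its decomposition into cyclics can be chosen to extend the one already fixed on $B_{n-1}$. This compatibility is the technical heart of Kulikov's theorem and is exactly where boundedness of the heights on each $G_n$ is used essentially; everything else is bookkeeping together with Theorem~\ref{t:Bounded Module}.
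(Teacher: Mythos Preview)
The paper does not prove Theorem~\ref{t:Bounded1}; it is quoted from \cite{bourbaki-1958} without argument, so there is no in-paper proof to compare against. Your outline follows the classical Pr\"ufer--Kulikov route, and both the easy implication of (1) and the deduction of (2) from (1) are correct as written (for the latter, note that your bound $h < N_n$ comes out because $x^{\gp^{t-1}} \in G_n[\gp] \cap G^{\gp^{h+t-1}}$ forces $h+t-1 < N_n$, and $t \ge 1$).

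For the hard implication of (1), you correctly flag the inductive step as the crux, but the particular scheme you describe does not obviously go through. To write $B_n = B_{n-1} \times D_n$ you need $B_{n-1}$ to be pure in $B_n$, and nothing in your construction ensures this: purifying a subgroup containing $G_n$ controls the boundedness of that subgroup via the height hypothesis, but it says nothing about purity of the previously built $B_{n-1}$ inside it. The standard proofs avoid this by taking a different line: either one applies Zorn's lemma to ascending families $(A_n)$ with $A_n \le G_n$ and $A_n \cap G^{\gp^{n}} = \{1\}$, showing a maximal such family satisfies $\bigcup_n A_n = G$ and yields a basis; or one decomposes the socle $G[\gp]$ according to exact height in $G$ and lifts each summand to a cyclic factor. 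Either approach uses the height bound on $G_n$ at the decisive moment and would complete your sketch; the direct-summand induction as you phrased it would need additional work to secure purity at each stage.
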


\vspace{.1in}

We end this section with a structure theorem for certain $R$-divisible groups.

\begin{thm}\cite{bourbaki-1958}\label{t:Divisible Module1}
Let $R$ be a PID. Every $R$-divisible group is $R$-isomorphic to a direct product of copies of $Q(\gp^{\infty})$ for various primes $\gp \in R$ and copies of the field of fractions of $R.$
\end{thm}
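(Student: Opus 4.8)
The plan is to recast the statement in module language — an abelian $R$-group is an $R$-module, and ``$R$-divisible'' is precisely ``divisible'' in the module-theoretic sense — and then adapt the classical classification of divisible modules over a PID. Write $G$ for the given $R$-divisible abelian $R$-group and $K$ for the field of fractions of $R$. The overall strategy is: split $G$ as $\gt(G) \oplus G'$ with $G'$ torsion-free, identify $G'$ with a direct sum of copies of $K$, and decompose the torsion part $\gt(G)$ into quasicyclic pieces.

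For the splitting, first note $\gt(G)$ is itself $R$-divisible: if $g^{\gb} = 1$ with $\gb \neq 0$ and $0 \neq \ga \in R$, then any $h \in G$ with $h^{\ga} = g$ satisfies $h^{\ga\gb} = 1$, so $h \in \gt(G)$. Over a PID, divisibility implies injectivity by Baer's criterion: every ideal is principal, say $(\ga)$; an $R$-homomorphism $(\ga) \to M$ is multiplication by some $m \in M$, and divisibility lets us extend it by multiplication by any $m'$ with $\ga m' = m$. Hence $\gt(G)$ is a direct $R$-summand and $G \cong_{R} \gt(G) \oplus G'$ with $G' \cong_{R} G/\gt(G)$ torsion-free and $R$-divisible. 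A torsion-free $R$-divisible module is a $K$-vector space — divisibility provides roots, torsion-freeness makes them unique, so scalar multiplication by $\gb/\ga$ is well defined (using $(gh)^{\ga}=g^{\ga}h^{\ga}$ in the abelian case) — hence $G'$ is $R$-isomorphic to a direct sum of copies of $K$.

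The heart of the argument is the torsion part. By the primary decomposition Theorem~\ref{t:DecompositionTheorem}, $\gt(G) = \prod_{\gp}\gt_{\gp}(G)$, and each $\gt_{\gp}(G)$, being a direct summand of the divisible group $\gt(G)$, is again $R$-divisible; so it suffices to show that a $\gp$-primary $R$-divisible module $M$ is a direct sum of copies of $Q(\gp^{\infty})$. Let $S = \{x \in M : x^{\gp} = 1\}$; since $(\gp)$ is maximal, $S$ is a vector space over the field $R/(\gp)$, and we fix a basis $\{u_{i}\}_{i \in I}$. Using $R$-divisibility, choose inductively $u_{i} = u_{i}^{(1)}, u_{i}^{(2)},\dots$ with $\big(u_{i}^{(n+1)}\big)^{\gp} = u_{i}^{(n)}$; then $u_{i}^{(n)}$ has annihilator $(\gp^{n})$, and $M_{i} := gp_{R}\big(u_{i}^{(n)} \mid n \geq 1\big) \cong_{R} Q(\gp^{\infty})$. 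I claim $M = \bigoplus_{i} M_{i}$. For independence, suppose $\prod_{i \in J} m_{i} = 1$ with $J$ finite and $1 \neq m_{i} \in M_{i}$, the largest annihilator among them being $(\gp^{k})$; raising to the power $\gp^{k-1}$ kills the factors of smaller order and carries each top-order $m_{i}$ to a non-trivial element of $M_{i} \cap S$, i.e.\ a non-zero scalar multiple of $u_{i}$, contradicting the independence of the $u_{i}$. For spanning, set $N = \sum_{i} M_{i}$; it is $R$-divisible (a sum of $R$-divisible subgroups, using Lemma~\ref{l:QuasiDivisible}) and contains $S$. Given $x \in M$ with annihilator $(\gp^{k})$, induct on $k$: $x^{\gp}$ has smaller annihilator, hence lies in $N$, so $x^{\gp} = y^{\gp}$ for some $y \in N$ by divisibility of $N$, and then $xy^{-1} \in S \subseteq N$, whence $x \in N$. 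Thus $M = \bigoplus_{i} M_{i}$, and assembling everything gives $G \cong_{R} \big(\bigoplus_{\gp}\bigoplus Q(\gp^{\infty})\big) \oplus \bigoplus K$.

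The main obstacle is this third step: checking that the quasicyclic $R$-subgroups $M_{i}$ built from a socle basis really assemble into an internal direct sum filling out all of $M$ — the independence computation (pushing a finite relation into the socle by a $\gp$-power) and the downward induction on annihilators for spanning are where the real work lies. The only point at which the argument leaves the paper's group-theoretic framework is the classical fact ``divisible $\Rightarrow$ injective over a PID'' used to split off $\gt(G)$; this could instead be done by a Zorn's lemma argument inside the category of abelian $R$-groups, but invoking Baer's criterion through the module structure is cleaner. The remaining ingredients — torsion-free divisible equals $K$-vector space, the primary decomposition, and divisibility passing to summands — are routine given the results already in place.
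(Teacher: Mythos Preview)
The paper does not actually prove this theorem: it is stated with a citation to Bourbaki and no proof is given, so there is nothing to compare your argument against line by line. That said, your proposal is the standard classification argument for divisible modules over a PID and is correct in outline and in its details --- splitting off $\gt(G)$ via injectivity, recognising the torsion-free divisible complement as a $K$-vector space, passing to primary components, and then lifting a basis of the socle $S = \{x : x^{\gp} = 1\}$ to towers generating copies of $Q(\gp^{\infty})$, with independence checked by pushing a relation into the socle and spanning handled by induction on the exponent of the annihilator. This is essentially the Bourbaki proof the paper is pointing to, so there is no meaningful divergence to discuss.
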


\section{MAIN SECTION}

Let $\C$ be a class of \nilR s. Two relationships between $\C$-separability and $\C$-residual properties for \nilR s are captured in the next lemma. These are mentioned in \cite{sokolov-2014} for ordinary nilpotent groups. We include the proof for completeness.

\begin{lem}\label{l:Separability and Residual}
Let $G$ be a \nilR.
\begin{enumerate}
\item $G$ is residually $\C$ if and only if $\{1\}$ is $\C$-separable in $G.$
\item Let $H \unlhd_{R} G.$ If $R$-homomorphic images of groups in $\C$ are in $\C,$ then $G/H$ is residually $\C$ if and only if $H$ is $\C$-separable in $G.$
\end{enumerate}
\end{lem}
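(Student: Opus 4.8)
The plan is to unwind Definition~\ref{d:Residual and R-Separability} directly, using only the $R$-isomorphism theorems recalled in the Preliminaries. Part (1) is essentially immediate: since $\gfi(\{1\}) = \{1\}$ for every $R$-homomorphism $\gfi$, the assertion ``for each $g \in G \setminus \{1\}$ there is an $R$-homomorphism $\gfi$ of $G$ onto some $K \in \C$ with $\gfi(g) \notin \gfi(\{1\})$'' is word-for-word the statement that $G$ is residually $\C$, so I would simply point this out.

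For part (2), in the direction ``$G/H$ residually $\C$ $\Rightarrow$ $H$ is $\C$-separable in $G$'', I would take $g \in G \setminus H$, note that $gH \neq 1$ in $G/H$, choose an $R$-epimorphism $\gy \colon G/H \to K \in \C$ with $\gy(gH) \neq 1$, and compose with the canonical projection $\nu \colon G \to G/H$. The composite $\gfi = \gy \circ \nu$ is an $R$-epimorphism onto $K$ with $\gfi(H) = \{1\}$ and $\gfi(g) = \gy(gH) \neq 1$, hence $\gfi(g) \notin \gfi(H)$. This direction uses neither hypothesis on $\C$.

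For the converse, I would take $gH \neq 1$ in $G/H$ (equivalently $g \in G \setminus H$) and an $R$-epimorphism $\gfi \colon G \to K \in \C$ with $\gfi(g) \notin \gfi(H)$. The key point --- and the only place the hypothesis that $R$-homomorphic images of groups in $\C$ lie in $\C$ is used --- is that $\gfi(H)$ is a normal $R$-subgroup of $K$ (the image of a normal $R$-subgroup under a surjective $R$-homomorphism), so $K/\gfi(H)$ is defined, is an $R$-homomorphic image of $K$, and therefore belongs to $\C$. Composing $\gfi$ with the projection $K \to K/\gfi(H)$ annihilates $H$, so by the $R$-isomorphism theorems it factors through an $R$-epimorphism $\overline{\gfi} \colon G/H \to K/\gfi(H)$ with $\overline{\gfi}(gH) = \gfi(g)\gfi(H)$; since $\gfi(g) \notin \gfi(H)$, we have $\overline{\gfi}(gH) \neq 1$, and thus $G/H$ is residually $\C$.

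I do not expect a genuine obstacle here. The only items requiring (routine) verification are that $\gfi(H) \unlhd_{R} K$ and that the map induced on the $R$-quotient $G/H$ is a well-defined $R$-epimorphism onto $K/\gfi(H)$, both of which are immediate from the $R$-isomorphism theorems; one may equivalently phrase the whole argument in terms of normal $R$-subgroups rather than $R$-homomorphisms, which makes the factorization step even more transparent.
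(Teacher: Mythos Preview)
Your proposal is correct and follows essentially the same route as the paper: compose with the canonical projection $G \to G/H$ in one direction, and pass to the quotient $K/\gfi(H)$ in the other, invoking closure of $\C$ under $R$-homomorphic images exactly once. The only cosmetic difference is that you make the normality of $\gfi(H)$ in $K$ explicit, whereas the paper leaves it implicit.
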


\begin{proof}
We only prove (2) since (1) is clear. Assume first that $G/H$ is residually $\C.$ Let $g \in G \setminus H$, so that $gH \neq H$ in $G/H$. Then there exists $K \in \C$ and an onto $R$-homomorphism
$ \widehat{\varphi}: G/H \rightarrow K$ such that $\widehat{\varphi} (gH) \neq 1$ in $K$. Let
$p: G \rightarrow G/H$ be the canonical $R$-homomorphism. Then $\varphi= \widehat{\varphi} \circ p$ is an $R$-homomorphism from $G$ onto $K$. By construction,
\[
\varphi(g) = \widehat{\varphi}(gH) \neq 1
\] in $K$. If $\varphi(g)=\varphi(h)$ for some $h \in H$ we would have $\varphi (g)=1$, which is impossible. This shows that $\varphi (g) \notin \varphi(H)$, and hence, $H$ is $\C$-separable in $G$.

For the converse, assume that $H$ is $\C$-separable in $G$ and let $xH \neq H$ in $G/H$. Then $x \notin H$ and there exists $\widehat{K} \in \C$ and an onto $R$-homomorphism
\[
\widehat{\varphi}: G \rightarrow \widehat{K}
\]
such that $\widehat{\varphi}(x) \notin \widehat{\varphi}(H)$. Consider the \nilR \, $K = \widehat{K} / \widehat{\varphi}(H)$. Let
\[
\varphi: G/H \rightarrow K
\]
be given by $\varphi(gH) = \widehat{\varphi}(g) \widehat{\varphi}(H) \in K$. A routine check confirms that $\varphi$ is a well-defined onto $R$-homomorphism. Since $\C$ is closed under $R$-homomorphic images, $K \in \C$. Finally, since $\widehat{\varphi}(x) \notin \widehat{\varphi}(H)$, we conclude that
 $\varphi(xH) = \widehat{\varphi}(x) \widehat{\varphi}(H) \neq 1$. This proves that $G/H$ is residually $\C$, as promised.
\end{proof}

\vspace{.1in}

{\textbf{For the rest of the paper, we restrict ourselves to the binomial ring $\Qx.$}

\vspace{.1in}

Every prime in $\Qx$ has an associate that is a monic prime. Therefore, without loss of generality, we work with monic prime polynomials. Throughout the remainder of the paper, $\mathbb{M}$ will denote the set of all monic primes in $\Qx.$ Our assumptions for subsets of primes given in the introduction can be stated thus:
\begin{itemize}
\item Every set of primes is a non-empty subset of $\mathbb{M}.$
\item If $\go$ is a set of primes, then every $\go$-member is monic and $\go '$ is the set of monic primes such that $\go \cap \go ' = \emptyset$ and $\go \cup \go ' = \mathbb{M}.$
\end{itemize}

\vspace{.15in}

\begin{lem}\label{l:Seperable Implies Isolated}
Let $G$ be a \nilQx. If $H \leq_{\Qx} G$ and $H$ is $\FTO$-separable in $G$ for some set of primes $\go$ in $\Qx,$ then $H$ is $\go '$-isolated in $G.$
\end{lem}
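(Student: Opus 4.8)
The plan is to argue by contraposition against the definition of $\go'$-isolated. Suppose $H$ is not $\go'$-isolated in $G$. Then there exists $g \in G \setminus H$ and an $\go'$-member $\gl \in \Qx$ such that $g^{\gl} \in H$. The goal is to show $H$ fails to be $\FTO$-separable in $G$, i.e., to show that for \emph{every} $R$-homomorphism $\gfi$ from $G$ onto a group $K \in \FTO$, we have $\gfi(g) \in \gfi(H)$. Equivalently, using Lemma~\ref{l:Separability and Residual}(2) (since $\FTO$ is closed under $R$-homomorphic images by Corollary~\ref{c:MAX}), it suffices to show that whenever $N \unlhd_{\Qx} G$ with $G/N \in \FTO$, the element $g$ lies in $HN$.

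So fix such an $N$ with $G/N \in \FTO$. Then $G/N$ is an $\go$-torsion group, so the coset $gN$ satisfies $(gN)^{\gm} = N$ for some $\go$-member $\gm$. Now I would combine this with the relation $g^{\gl} \in H$. Passing to $G/N$, we have $(gN)^{\gl} \in HN/N$ and $(gN)^{\gm} = N$. Because $\gl$ is an $\go'$-member and $\gm$ is an $\go$-member, $\gl$ and $\gm$ share no common prime factor in $\mathbb{M}$, hence $\gcd(\gl,\gm) = 1$ in the PID $\Qx$. Therefore there exist $u, v \in \Qx$ with $u\gl + v\gm = 1$. Working inside the abelian context afforded by the relevant subquotient, or more carefully by a direct computation in $G/N$, write
\[
gN = (gN)^{u\gl + v\gm} = \left((gN)^{\gl}\right)^{u}\left((gN)^{\gm}\right)^{v} = \left((gN)^{\gl}\right)^{u}.
\]
Since $(gN)^{\gl} = g^{\gl}N \in HN/N$ and $HN/N$ is an $R$-subgroup (hence closed under $\Qx$-powers), we get $gN \in HN/N$, i.e., $g \in HN$. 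As $N$ was arbitrary, $H$ is $\FTO$-separable fails to be violated at $g$ — contradicting $g \notin H$ together with separability. This establishes that $H$ must be $\go'$-isolated.

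The main obstacle is the middle computation: the manipulation $gN = (gN)^{u\gl+v\gm} = ((gN)^{\gl})^u((gN)^{\gm})^v$ uses the identity $(xy)$-type splitting of powers, which is only automatic in an abelian $R$-group, not in a general nilpotent $R$-powered group. To handle this honestly I would not work in all of $G/N$ but rather restrict attention to the cyclic $\Qx$-subgroup $gp_{\Qx}(gN) \leq_{\Qx} G/N$, which is abelian; there the module identities from axiom (3) (with $\gt_1(g,h) = gh$) give $g^{\ga}g^{\gb} = g^{\ga+\gb}$ and $(g^\ga)^\gb = g^{\ga\gb}$ freely, so the Bézout argument goes through cleanly and concludes $gN \in gp_{\Qx}(g^{\gl}N) \subseteq HN/N$. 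I would double-check that $(gN)^{\gm}=N$ really holds: $G/N$ is finitely $\Qx$-generated and $\go$-torsion, so by the first Example after Definition~\ref{d:EXPONENT} it is bounded with bounded exponent an $\go$-member $\gm$, which serves for every element simultaneously.
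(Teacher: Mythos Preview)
Your proposal is correct and follows essentially the same route as the paper: argue by contraposition and use a B\'ezout identity in the PID $\Qx$ to combine the $\go'$-member exponent coming from the failure of isolation with the $\go$-member exponent coming from the $\go$-torsion target. One remark: the ``main obstacle'' you flag is not actually an obstacle, since the identities $g^{\ga}g^{\gb}=g^{\ga+\gb}$ and $(g^{\ga})^{\gb}=g^{\ga\gb}$ are precisely axiom~(1) of a nilpotent $R$-powered group and hold for any single element without any abelian hypothesis---the paper applies them directly to $\gfi(g)$ with no further justification.
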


\begin{proof}
Suppose that $H$ is not $\go '$-isolated. Then there exists $g \in G \setminus H$  and $\ga$ an $\go '$-member such that $g^{\ga} \in H.$ By hypothesis, there is a \nilQx \, of finite $\go$-type $K$ and an $\Qx$-homomorphism $\gfi$ from $G$ onto $K$ such that $\gfi(g) \notin \gfi(H).$ Since $K$ is $\go$-torsion, there exists an $\go$-member $\gm$ such that $(\gfi(g))^{\gm} = \gfi(g^{\gm}) = 1.$ Now, $\ga$ and $\gm$ are relatively prime because $\ga$ is not an $\go$-member. Since $\Qx$ is a PID, there exist $r, \, s \in \Qx$ such that $r \ga + s \gm = 1.$ Noting that $g^{\ga} \in H$ implies that $g^{r \ga} \in H,$ we get
\[
\gfi(g) = (\gfi(g))^{r \ga + s \gm} = \gfi(g^{r \ga}) (\gfi(g^{\gm}))^{s} = \gfi(g^{r \ga}) \in \gfi(H),
\]
a contradiction. We conclude that $H$ is $\go '$-isolated.
\end{proof}

We would like to find conditions for \nilQx s to satisfy the converse of Lemma~\ref{l:Seperable Implies Isolated}. This motivates the next definition.

\begin{defn}\label{d:Property GO}
Let $\go$ be a set of primes in $\Qx.$ A \nilQx \ $G$ has \emph{property $\GO$} for some family of its $\Qx$-subgroups if every $\go '$-isolated $\Qx$-subgroup in this family is $\FTO$-separable.
\end{defn}

For simplicity, we will just write ``$G$ has property $\GO$" when the family of $\Qx$-subgroups of $G$ being referred to is the family of \textit{all of its $\Qx$-subgroups}.

\vspace{.2in}

We will be interested in those abelian $\Qx$-groups that satisfy the following:

\vspace{.1in}

\noindent {\textbf{\underline{Condition ($B_{\go}$)}}}: If $\go$ is a set of primes in $\Qx$ and $G$ is an abelian $\Qx$-group, then for every $\Qx$-quotient $G/N$ of $G$ and every $\gp \in \go,$ the $\gp$-primary component $\gt_{\gp}(G/N)$ of $G/N$ is bounded.

\begin{prop}\label{p:Condition Equivalence}
Let $\go$ be a set of primes in $\Qx.$ An abelian $\Qx$-group $G$ satisfies Condition ($B_{\go}$) if and only if none of the $\Qx$-quotients of $G$ contain $\gp$-quasicyclic $\Qx$-subgroups for every $\gp \in \go.$
\end{prop}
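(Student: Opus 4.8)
The plan is to prove both implications by contrapositive, using the structural facts about divisible groups and bounded groups collected in the preliminaries. Throughout, recall that passing to a quotient of $G$ and then to a further quotient is again a quotient of $G$, so Condition ($B_{\go}$) is inherited by all $\Qx$-quotients; likewise the property ``no $\Qx$-quotient contains a $\gp$-quasicyclic $\Qx$-subgroup for any $\gp \in \go$'' is inherited by quotients. This closure under quotients is what makes the equivalence work cleanly.

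For the easy direction, suppose some $\Qx$-quotient $G/N$ contains a $\gp$-quasicyclic $\Qx$-subgroup $Q \cong_{\Qx} Q(\gp^\infty)$ for some $\gp \in \go$. Then $Q \le \gt_{\gp}(G/N)$, and since $Q$ is unbounded (it is an unbounded $\gp$-torsion group by the Example in Section 2), the $\gp$-primary component $\gt_{\gp}(G/N)$ is unbounded as well. Hence $G$ fails Condition ($B_{\go}$). First I would write this out; it is essentially immediate.

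For the converse, suppose $G$ fails Condition ($B_{\go}$): there is a $\Qx$-quotient $G/N$ and a prime $\gp \in \go$ with $T := \gt_{\gp}(G/N)$ unbounded. Replacing $G$ by $G/N$, I may assume $G$ itself is an abelian $\Qx$-group whose $\gp$-primary component $T$ is an unbounded $\gp$-torsion abelian $\Qx$-group, and I want to produce a $\Qx$-quotient of $G$ containing a copy of $Q(\gp^\infty)$. The standard module-theoretic fact to invoke is that an unbounded $\gp$-torsion abelian group over a PID either contains a copy of $Q(\gp^\infty)$ outright, or contains cyclic $\Qx$-subgroups $gp_{\Qx}(g_n)$ with $\mathrm{Ann}(g_n) = \langle \gp^{k_n}\rangle$ and $k_n \to \infty$. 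In the first case we are already done (a subgroup, hence $G$ itself, contains $Q(\gp^\infty)$, which is a $\Qx$-quotient of itself via the identity). In the second case, I would embed $G$ into an $\Qx$-divisible group $D$ (Theorem~\ref{t:Divisible Module2}), and use the structure theorem for divisible groups (Theorem~\ref{t:Divisible Module1}): $D$ is a direct product of copies of various $Q(\gr^\infty)$ and copies of the field of fractions of $\Qx$. The unbounded $\gp$-torsion in $G$ forces the divisible hull $D$ to have a copy of $Q(\gp^\infty)$ as a direct factor; projecting $D$ onto that factor and arguing (via the $k_n \to \infty$ elements, which have arbitrarily large $\gp$-height in $D$ and therefore nonzero image under a suitable projection) that the restriction to $G$ of this projection is onto, yields a surjective $\Qx$-homomorphism from $G$ onto $Q(\gp^\infty)$. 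That quotient is the desired $\gp$-quasicyclic $\Qx$-subgroup (indeed equal to) a $\Qx$-quotient of $G$.

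The main obstacle is the converse, and specifically the bookkeeping needed to guarantee that the projection of the divisible hull onto a $Q(\gp^\infty)$-factor restricts to a \emph{surjection} on $G$ rather than merely a nonzero map — one must choose the divisible-hull decomposition so that some $Q(\gp^\infty)$-factor ``sees'' elements of unbounded $\gp$-height coming from $G$, and then check that the image of $G$ in that factor, being an unbounded $\gp$-subgroup of $Q(\gp^\infty)$, is all of $Q(\gp^\infty)$ (every proper $\Qx$-subgroup of $Q(\gp^\infty)$ is finite cyclic, hence bounded). An alternative that sidesteps the divisible hull: work directly inside $T$, build an ascending chain of cyclic subgroups with the $k_n \to \infty$ generators chosen compatibly (as in the classical proof that an unbounded reduced $\gp$-group has a quotient $\cong Q(\gp^\infty)$ only after care), but the divisible-hull route is cleaner given that Theorems~\ref{t:Divisible Module2} and \ref{t:Divisible Module1} are already available. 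I would also record the small lemma that every proper $\Qx$-subgroup of $Q(\gp^\infty)$ is bounded, as it is used in the surjectivity step.
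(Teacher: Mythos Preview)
Your easy direction is correct and matches the paper's argument. The gap is exactly where you flag it: the surjectivity of the projection from $G$ onto a $Q(\gp^{\infty})$ factor of the divisible hull $D$. Your justification --- ``the $k_{n}\to\infty$ elements have arbitrarily large $\gp$-height in $D$ and therefore nonzero image under a suitable projection'' --- does not work. Every element of $D$ has infinite $\gp$-height (that is what divisibility means), so this gives no information. What you actually need is elements of $G$ whose images in a \emph{fixed} $Q(\gp^{\infty})$ factor have unbounded order, and elements $g_{n}$ of large order in $T$ do not supply this: for instance, if $T=\prod_{n\geq 1}\Qx/(\gp^{n})$ then the divisible hull is $\prod_{n\geq 1}Q(\gp^{\infty})$, and the generator of the $n$th cyclic factor lives entirely in the $n$th quasicyclic factor, so projection to any single factor has bounded image. (Of course $T$ still surjects onto $Q(\gp^{\infty})$ here, but not via the divisible hull.) Your dichotomy ``$T$ contains $Q(\gp^{\infty})$ versus it does not'' is therefore not the right one: in your second case $T$ may or may not have elements of infinite height \emph{in $T$}, and the two subcases need different arguments.

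The paper fixes this with a different two-case split. First it passes to a countably $\Qx$-generated unbounded $\gp$-torsion subgroup $K/N$ (built from elements of orders $\gp, \gp^{2}, \ldots$). Then: if $K/N$ has no nontrivial element of infinite height \emph{in $K/N$}, Theorem~\ref{t:Bounded1}(2) (the Kulikov--Pr\"{u}fer criterion) applies and $K/N$ is a direct product of cyclic $\Qx$-groups with unbounded exponents, from which an explicit surjection onto $Q(\gp^{\infty})$ is immediate. If instead some $1\neq aN\in K/N$ has infinite height in $K/N$, then one embeds $G/N$ in a divisible $D$, chooses a $Q(\gp^{\infty})$ factor on which $aN$ projects nontrivially, and uses the $\gp^{i}$th roots $x_{i}N$ of $aN$ \emph{lying in $K/N$} (not in $D$) to force $(\psi(x_{i}N))^{\gp^{i}}=\psi(aN)\neq 1$; so the image is unbounded and hence all of $Q(\gp^{\infty})$ by your ``proper subgroups of $Q(\gp^{\infty})$ are bounded'' observation. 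The countable reduction is essential for invoking Theorem~\ref{t:Bounded1}(2), and the height-based split is what makes each case tractable.
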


\begin{proof}
Assume that there is a factor $\Qx$-group $G/N$ and a prime $\gp \in \go$ such that $\gt_{\gp}(G/N)$ is unbounded. We assert that some $\Qx$-quotient of $G$ has a $\gp$-quasicyclic $\Qx$-subgroup.

Let $k_{i}N$ be a non-trivial element of $\gt_{\gp}(G/N)$ of order $\gp^{i}$ for some $i \geq 1$ (such an element exists since $\gt_{\gp}(G/N)$ is unbounded.)
Set $k_{\ell}N = (k_{i}N)^{\gp^{i - \ell}}$ and note that $k_{\ell}N$ has order $\gp^{\ell}$ for $1 \leq \ell \leq i - 1.$ The unboundedness of $\gt_{\gp}(G/N)$ allows us to find an element $k_{j}N$ of order $\gp^{j},$ where $j > i.$ Once again, if we put $k_{s}N = (k_{j}N)^{\gp^{j - s}},$ then $k_{s}N$ has order $\gp^{s}$ for $i + 1 \leq s \leq j - 1.$ We continue this process and obtain a countable set of (distinct) elements $k_{1}N, \ k_{2}N, \ \ldots$ of $\gt_{\gp}(G/N)$ such that the order of each $k_{i} N$ is $\gp^{i}$.

Put $K/N = gp_{\Qx}(k_{1}N, \ k_{2}N, \ \ldots),$ and observe that $K/N$ is a $\gp$-torsion group with a countably infinite set of $\Qx$-generators. There are two cases to consider, depending on whether or not $K/N$ contains an element of infinite height.

\vspace{.15in}

\noindent \underline{Case (i)}: Suppose that $K/N$ has no non-trivial elements of infinite height. By Theorem~\ref{t:Bounded1}, $K/N$ is a direct product of countably many cyclic $\Qx$-subgroups whose exponents need not be bounded. In this case, there is a surjective $\Qx$-homomorphism  $\gfi : K/N \rightarrow L$, where $L$ is a $\gp$-quasicyclic $\Qx$-group. By the $\Qx$-isomorphism theorems,
\[
L \cong_{\Qx} \frac{K/N}{P/N} \leq_{\Qx}\frac{G/N}{P/N} \cong_{\Qx} G/P,
\]
where $P/N$ is the kernel of $\gfi.$ Therefore, some $\Qx$-quotient of $G$ contains a $\gp$-quasicyclic $\Qx$-subgroup as asserted.

\vspace{.15in}

\noindent \underline{Case (ii)}: Assume that $aN$ is an element of $K/N$ of infinite height and $aN \neq N.$ By Theorems~\ref{t:Divisible Module2} and \ref{t:Divisible Module1}, there is a $\Qx$-monomorphism of $G/N$ into a divisible $\Qx$-group $D$ and $D$ is $\Qx$-isomorphic to a direct product of copies of $Q(\gs^{\infty})$ for various primes $\gs \in \Qx$ and copies of the field of fractions of $\Qx.$ This direct product must have a $\gp$-quasicyclic factor since
$aN$ has bounded order $\gp^{m}$ for some $m \in \N.$ Thus, there exists a $\Qx$-homomorphism $\gr$ of $D$ onto a $\gp$-quasicyclic factor (say $L$) whose kernel does not contain $aN.$

We show that there is a $\Qx$-homomorphism of $G$ onto $L.$ Regard $G/N$ as a $\Qx$-subgroup of $D$ and set $\psi = \gr | _{G/N}.$ We claim that $\psi$ has unbounded image. To begin with, note that there exist elements $x_{i} \in K$ ($i \in \N$) such that
\[
aN = (x_{1}N)^{\gp} = (x_{2}N)^{\gp^{2}} = \cdots = (x_{i}N)^{\gp^{i}} = \cdots =
\]
because $aN$ has infinite height. Since $aN \notin \ker \gr$,
\[
\gr(aN) = \psi(aN) = (\psi(x_{i}N))^{\gp^{i}} \neq 1
\]
for all $i \in \N$. Thus, the image of $\psi$ must be unbounded as claimed. Now, since the only non-trivial proper $\Qx$-subgroups of $L$ are bounded and $\psi(aN) \neq 1,$ the image of $\psi$ equals $L.$ And so, if $\gfi : G \rightarrow G/N$ is the natural $\Qx$-homomorphism, then $\psi \circ \gfi : G \rightarrow L$ is a surjective $\Qx$-homomorphism. It follows from the $\Qx$-isomorphism theorems that some $\Qx$-quotient of $G$ is $\Qx$-isomorphic to a $\gp$-quasicyclic $\Qx$-subgroup. This proves the assertion for Case (ii).

Next we prove by contradiction that if $G$ satisfies Condition ($B_{\go}$), then none of the $\Qx$-quotients of $G$ contain $\gp$-quasicyclic $\Qx$-subgroups for all $\gp \in \go.$ Suppose that this is not the case, and there is a $\Qx$-quotient $G/N$ containing a $\gp$-quasicyclic $\Qx$-subgroup, say $H/N$, for some $\pi \in \go$. Then $H/N \leq_{\Qx} \gt_{\gp}(G/N)$ since all $\gp$-quasicyclic $\Qx$-groups are $\gp$-torsion. Since $\gt_{\gp}(G/N)$ is bounded by hypothesis, so is $H/N.$ This contradicts the fact that $H/N$ is an unbounded $\gp$-torsion $\Qx$-group.
\end{proof}

\begin{thm}\label{t:Abelian Is Gomega}
Let $\go$ be a set of primes in $\Qx.$ An abelian $\Qx$-group $G$ has property $\GO$ if and only if it satisfies Condition ($B_{\go}$).
\end{thm}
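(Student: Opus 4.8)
The plan is to prove the two directions separately, using Proposition~\ref{p:Condition Equivalence} to convert Condition ($B_{\go}$) into the statement that no $\Qx$-quotient of $G$ contains a $\gp$-quasicyclic $\Qx$-subgroup for any $\gp\in\go$.

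\emph{($\Leftarrow$) Condition ($B_{\go}$) implies property $\GO$.} Let $H\leq_{\Qx}G$ be $\go'$-isolated and let $g\in G\setminus H$; we must produce a surjection onto a finite $\go$-type group separating $g$ from $H$. By Lemma~\ref{l:torsionfree and isolated}, $G/H$ is $\go$-torsion-free. The natural strategy is to pass to $\bar G=G/H$ and find a $\Qx$-quotient of $\bar G$ lying in $\FTO$ in which the image $\bar g$ of $g$ is nontrivial; pulling back along $G\to G/H$ and composing gives the required map, and by Lemma~\ref{l:Separability and Residual}(2) it suffices to show $G/H$ is residually $\FTO$. So the real content reduces to: \emph{an abelian $\Qx$-group $\bar G$ that is $\go$-torsion-free and inherits Condition ($B_{\go}$) is residually $\FTO$.} Here is where I would work: pick $1\neq\bar g\in\bar G$. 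First kill $\go'$-torsion by passing to $\bar G/\gt_{\go'}(\bar G)$ — note $\bar g$ survives since $\bar G$ is $\go$-torsion-free and $\gt_{\go'}$ meets $\gt_{\go}$ trivially — so we may assume $\bar G$ is $\go'$-torsion-free as well, i.e. for each $\gp\in\go$ its primary structure is the only torsion present. Now choose $\gp\in\go$ and a large power $\gp^{n}$ with $\bar g\notin\bar G^{\gp^{n}}$ (possible precisely because $\bar G$ has no $\gp$-quasicyclic subquotient: if $\bar g$ lay in $\bar G^{\gp^{n}}$ for all $n$ one manufactures such a subgroup, mirroring the argument in Proposition~\ref{p:Condition Equivalence}). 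Then $\bar G/\bar G^{\gp^{n}}$ is bounded, hence by Theorem~\ref{t:Bounded Module} a direct product of cyclic $\Qx$-groups, hence residually $\FT$ by Corollary~\ref{c:Residually Finite P-Type 1}; moreover it is a $\gp$-torsion group, so it is residually $\FTP\subseteq\FTO$, and $\bar g$ has nontrivial image there. Assembling: $G\to G/H\to (G/H)/(\text{stuff})$ gives the separating homomorphism.

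\emph{($\Rightarrow$) Property $\GO$ implies Condition ($B_{\go}$).} Argue contrapositively. Suppose Condition ($B_{\go}$) fails; by Proposition~\ref{p:Condition Equivalence} there is a $\Qx$-quotient $G/N$ with a $\gp$-quasicyclic $\Qx$-subgroup $L/N\cong_{\Qx}Q(\gp^{\infty})$ for some $\gp\in\go$. The goal is to exhibit a $\go'$-isolated $\Qx$-subgroup of $G$ that is not $\FTO$-separable; the candidate is $N$ itself, or rather a suitable isolator of $N$. Take $H$ to be the $\go'$-isolator of $N$ in $G$ (the set of $h$ with $h^{\gl}\in N$ for some $\go'$-member $\gl$) — this is a $\Qx$-subgroup, is $\go'$-isolated by construction, and $G/H$ is the $\go'$-torsion-free quotient of $G/N$, which still contains an image of $Q(\gp^{\infty})$ (a divisible $\gp$-group maps onto a divisible $\gp$-group under the quotient, since $\gp\in\go$ kills no $\go'$-torsion). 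Pick $a\in G$ with $aH\neq H$ but $aH$ of infinite $\gp$-height in $G/H$ — an image of a quasicyclic generator. If $H$ were $\FTO$-separable there would be a surjection $\gfi:G\to K\in\FTO$ with $\gfi(a)\notin\gfi(H)$; but $K$ is bounded (Example 1 after Definition~\ref{d:EXPONENT}), say of exponent $\gl$, while $aH$ has a $\gp^{m}$th root modulo $H$ for every $m$, forcing $\gfi(a)\in\gfi(G)^{\gp^{m}}$ for all $m$, hence $\gfi(a)=1$ once $\gp^{m}$ clears $\gl$ — using that the $\gp$-part of $K$ is bounded and $\gp\in\go$. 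That contradicts $\gfi(a)\notin\gfi(H)$ (as $1\in\gfi(H)$). Hence property $\GO$ fails.

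\emph{Main obstacle.} The delicate point is the interaction between the $\go'$-torsion and the $\gp$-quasicyclic part when forming $H$ in the ($\Rightarrow$) direction: one must be sure that after dividing by the $\go'$-isolator the quasicyclic behaviour (infinite $\gp$-height of some nontrivial element) genuinely persists, which rests on the fact that for $\gp\in\go$ the $\gp$-divisible structure is untouched by collapsing $\go'$-torsion. A secondary care point in the ($\Leftarrow$) direction is verifying that the reductions (kill $\gt_{\go'}$, then bound by $\bar G^{\gp^{n}}$) are legitimate — i.e. that Condition ($B_{\go}$) is inherited by the relevant subquotients, which is immediate since a quotient of a quotient of $G$ is a quotient of $G$ — and that a single $\gp\in\go$ suffices to separate one fixed nontrivial element, which it does because $\bar g$ being nontrivial and $\go$-torsion already pins down some $\gp\in\go$ with $\bar g\notin\bar G^{\gp^{n}}$ for large $n$.
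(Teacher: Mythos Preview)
Your overall strategy is right, but both directions contain genuine gaps as written.

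In the $(\Leftarrow)$ direction, Lemma~\ref{l:torsionfree and isolated} gives that $G/H$ is $\go'$-torsion-free, not $\go$-torsion-free; so your step ``kill $\gt_{\go'}$'' is vacuous and the subsequent reasoning is tangled. More seriously, you never handle the case where $\bar g = gH$ has \emph{infinite} order: your closing remark pins down $\gp$ by saying ``$\bar g$ being nontrivial and $\go$-torsion already pins down some $\gp\in\go$,'' but $\bar g$ need not be torsion at all. The paper splits into three cases (prime-power order, general finite order, infinite order), and the infinite-order case is the one requiring real work: one passes to $\bar G/gp_{\Qx}(\bar g)$, checks that this quotient still satisfies Condition~($B_{\go}$), and then shows that $\bar g\in\bar G^{\ga^{n}}$ for all $n$ would force $\gt_{\ga}(\bar G/gp_{\Qx}(\bar g))$ to be unbounded. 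Your hand-wave to Proposition~\ref{p:Condition Equivalence} does not supply this.

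In the $(\Rightarrow)$ direction your computation is incorrect. From $aH=(bH)^{\gp^{m}}$ you only get $\gfi(a)\in \gfi(b)^{\gp^{m}}\gfi(H)$, \emph{not} $\gfi(a)\in K^{\gp^{m}}$; and even if you had $\gfi(a)\in K^{\gp^{m}}$ for all $m$, this would not force $\gfi(a)=1$, since $K\in\FTO$ may carry $\gr$-torsion for primes $\gr\in\go\setminus\{\gp\}$, and such elements lie in $K^{\gp^{m}}$ for every $m$. Your argument is salvageable---one should work in $K/\gfi(H)$ and use that $aH$ is $\gp$-torsion to force $\gfi(a)\gfi(H)$ into $\gt_{\gp}(K/\gfi(H))\cap (K/\gfi(H))^{\gp^{m}}=1$---but this is not what you wrote. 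The paper avoids this delicacy entirely by taking, instead of the $\go'$-isolator of $N$, the larger $T$ with $T/N=\prod_{\gb\neq\gp}\gt_{\gb}(G/N)$; then $\gt(G/T)$ is purely $\gp$-torsion, the quasicyclic subgroup survives, and Theorem~\ref{t:Malcev} immediately gives that $G/T$ is not residually $\FT$, hence $T$ is $\go'$-isolated but not $\FTO$-separable.
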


\begin{proof}
Assume that $G$ does not satisfy Condition ($B_{\go}$).
By Proposition~\ref{p:Condition Equivalence}, there exists $N \leq_{\Qx} G$ such that $G/N$ contains a $\gp$-quasicyclic $\Qx$-subgroup for some $\gp \in \go.$ Set $\mathbb{P} = \mathbb{M} \setminus \{\gp\}.$ By Theorem~\ref{t:DecompositionTheorem}, the family $\left \{ \gt_{\gb}(G/N)  \right \}_{\gb \in \mathbb{P}}$ of $\Qx$-subgroups of the $\Qx$-torsion subgroup of $G/N$ is a direct product of its members. Put
\[
T/N = \prod_{\gb \in \mathbb{P}}\gt_{\gb}(G/N).
\]
If $\gfi : G/N \longrightarrow (G/N)/(T/N)$ is the natural $\Qx$-homomorphism and $L/N$ is a $\gp$-quasicyclic $\Qx$-subgroup of $G/N,$ then $\gfi(L/N)$ is also $\gp$-quasicyclic because $L/N$ has trivial intersection with $T/N$ =  $\ker \gfi.$ Thus, by the $\Qx$-isomorphism theorems, $G/T$ contains a $\gp$-quasicyclic $\Qx$-subgroup. Hence, $G/T$ contains a non-trivial $\Qx$-divisible $\Qx$-subgroup by Lemma~\ref{l:QuasiDivisible}. By Theorem~\ref{t:Malcev}, $G/T$ is not residually finite type and, consequently, not residually finite $\go$-type. By Lemma~\ref{l:Separability and Residual}, $T$ cannot be $\FTO$-separable in $G.$ On the other hand, $T$ is $\go '$-isolated in $G$ because $\gt(G/T)$ is a $\gp$-torsion group. To see why this is true, suppose that $g^{\ga} \in T$ for some $\ga \in \Qx$ that is not an $\go$-member. We claim that $g \in T.$ Well, $g^{\ga} \in T$ implies that $(gT)^{\ga} = T$ in $G/T.$ Since $\gt(G/T)$ is a $\gp$-torsion group, there exists $t \in \N$ such that $(gT)^{\gp^{t}} = T.$ Since $\ga$ is not an $\go$-member and $\gp \in \go,$ there exist elements $a, \ b \in \Qx$ such that $a \ga + b \gp^{t} = 1.$ Hence,
\[
gT = (gT)^{a \ga + b \gp^{t}} = (gT)^{a \ga} (gT)^{b \gp^{t}} = T.
\]
And so, $g \in T$ as claimed. Therefore, $G$ does not have property $\GO.$

Next suppose that $G$ satisfies Condition ($B_{\go}$), and let $H$ be an $\go '$-isolated $\Qx$-subgroup in $G.$ We claim that $H$ is $\FTO$-separable or, equivalently (by Lemma~\ref{l:Separability and Residual}), that $G/H$ is residually $\FTO.$ To simplify notation, we let $\overline{K}$ denote the $\Qx$-quotient $K/H$ for $H \leq_{\Qx} K$ and any $K \leq_{\Qx}G.$ Choose an element $aH \in \overline{G}$ such that $aH \neq H.$ We divide the proof into 3 cases which depend on the order of $aH.$

\vspace{.05in}

\noindent \underline{Case 1}: Suppose that $aH$ has monic prime power order, say $\gp^{r}$ for some monic prime $\gp \in \Qx$ and $r \in \N.$ Since $H$ is $\go '$-isolated in $G$ and $aH \neq H$ it must be that $\gp \in \go.$ (If, instead, $\gp \in \go ',$ then $a^{\gp^{r}} \in H$ would imply that $a \in H,$ a contradiction).

Let $\gp^{t}$ be the exponent of $\gt_{\gp}\left(\overline{G}\right)$ for some $t \in \N$ (the exponent is bounded because $G$ satisfies Condition ($B_{\go}$)). It is clear that $\overline{G}^{\gp^{t}} \cap \gt_{\gp}\left(\overline{G}\right) = \{ H \}.$ Hence, $aH \overline{G}^{\gp^{t}} \neq \overline{G}^{\gp^{t}}.$ Furthermore, the $\gp$-torsion group $\overline{G}/\overline{G}^{\gp^{t}}$ is bounded and, thus, $\Qx$-isomorphic to a direct product of $\gp$-torsion cyclic $\Qx$-subgroups by Theorem~\ref{t:Bounded Module}. And so, $\overline{G}/\overline{G}^{\gp^{t}}$ is residually $\FTP$ by Corollary~\ref{c:Residually Finite P-Type 1}. It follows that $\overline{G}$ is also residually $\FTP.$ Since $\pi \in \go,$ we conclude that $\overline{G}$ is residually $\FTO.$

\vspace{.1in}

\noindent \underline{Case 2}: Suppose that $aH$ has non-infinite order that is monic and not a power of a prime, say $\gp^{r} \gm \in \Qx,$ where $\gp \in \Qx$ is a monic prime, $r \in \N,$ and $\gm \in \Qx$ is monic and not divisible by $\gp.$ As in Case 1, we have that $\gp \in \go$; for otherwise, $a^{\gp^{r} \gm} \in H$ would imply that $a^{\gm} \in H$ since $H$ is $\go '$-isolated, and this would be impossible.

Set $\mathbb{P} = \mathbb{M} \setminus \{\gp\}.$ By Theorem~\ref{t:DecompositionTheorem} the $\Qx$-subgroups $\gt_{\gb}\left(\overline{G}\right)$ form a direct product for the various primes $\gb \in \mathbb{P}.$ Set
\[
N = \prod_{\gb \in \mathbb{P}}\gt_{\gb}\left(\overline{G}\right).
\]
We claim that $N$ is $\go '$-isolated in $\overline G$. To see this assume that $(gH)^{\ga} \in N$ for some $gH \in \overline G$ and $\go '$-member $\ga.$ Then $\gp$ is relatively prime to both $\ga$ and to the order of $(gH)^{\ga}$. Hence, $\gp$ is relatively prime to the order of $gH$ itself. This shows that $gH \in N$, and our claim holds. Note also that $(aH)^{\pi^r} \in N$ since the order of $(aH)^{\pi^r}$ is $\gm$, which is relatively prime to $\pi$.
Hence $(aH)N$ has prime power order in $\overline{G}/N$.

By Case 1, there exist $K \in \FTO$ and an onto $\Qx$-homomorphism
\[ \psi: \overline G / N \rightarrow K
\] such that $\psi [ (aH)N ] \neq 1$. If $\Phi$ is the natural $\Qx$-homomorphism from $\overline G$ onto $\overline G /N$, then $\Phi \circ \psi$ is a $\Qx$-homomorphism from $\overline G$ onto $K$ that maps $aH$ to a non-trivial element in $K$. This completes Case 2.

\vspace{.1in}

\noindent \underline{Case 3}: Let $aH$ have infinite order and set $C = gp_{\Qx}(aH).$ We assert that $\overline{G}/C$ satisfies Condition ($B_{\go}$). Well, if $\overline{K}/C \leq_{\Qx} \overline{G}/C,$ then
\[
\frac{\overline{G}/C}{\overline{K}/C} \cong_{\Qx} \overline{G}/\overline{K} \cong_{\Qx} G/K.
\]
Since $G$ satisfies Condition ($B_{\go}$), $\gt_{\gp}(G/K)$ is bounded for any $\gp \in \go.$ Hence $\gt_{\gp}\left(\left(\overline{G}/C\right)/\left(\overline{K}/C\right)\right)$ is also bounded and, consequently, $\overline{G}/C$ satisfies Condition ($B_{\go}$) as asserted. Thus, $\gt_{\gp}\left(\overline{G}/{C}\right)$ is bounded for each $\gp \in \go.$

Choose a specific prime $\ga \in \go.$ We prove that there exists a natural number $r$ such that $aH \notin \overline{G}^{\ga^{r}}.$ Suppose, on the contrary, that $aH \in \overline{G}^{\ga^{t}}$ for all $t \in \N.$ Then there exist
$g_{1}, \, g_{2}, \, \ldots \in G$ such that
\[
aH = \left( g_{1}H \right)^{\ga} = \left( g_{2}H \right)^{\ga^{2}} = \cdots
\]
in $\overline G$. So, in $\overline{G} / C$, we have
\[
C = (aH)C = \left( g_{1}H \right)^{\ga}C = \left( g_{2}H \right)^{\ga^{2}}C = \cdots .
\]
Consequently, $\left(g_{i} H\right) C \in \gt_{\ga}\left(\overline{G} / C \right)$ for all $i \in \N.$ We claim that the order of $(g_{i}H) C$ is precisely $\ga^{i}.$ This is obviously the case for $i = 1$. Let $i > 1$. Assume that the order of $(g_{i}H)C$ is $\ga^k$ for some $k < i$. Since $(g_{i}H)^{\ga^k} \in C = gp_{\Qx}(aH)$, there is a $\gm \in \Qx$ such that $(g_iH)^{\ga^k} = (aH)^{\gm}$. Then
\[
aH = (g_{i}H)^{\ga^i} = \left((g_{i}H)^{\ga^k} \right)^{\ga^{i - k}} = (aH)^{\gm \ga^{i - k}}.
\]
Hence, $ (aH)^{\gm \ga^{i - k} - 1} = H$. Since $aH$ is of infinite order, $\gm \ga^{i - k} - 1 = 0$, which is false. Thus, there exists $r \in \N$ such that
$aH \notin \overline G^{\ga^r}$, as claimed.

It is clear that $\overline G^{\ga^r}$ is $\go '$-isolated in $\overline G$, and that $(aH) \overline{G}^{\ga^{r}}$ has order a power of $\ga$ in $\overline G / \overline{G}^{\ga^r}$. Proceeding as in Case 2, we can find a group $L$ in $\FTO$ and a $\Qx$-homomorphism from $\overline G$ onto $L$ that maps $aH$ non-trivially. This completes the proof of the theorem.
\end{proof}

\begin{defn}\label{d:Abelian wrestricted}
Let $\go$ be a set of primes in $\Qx.$ An abelian $\Qx$-group $G$ is \emph{$\go$-restricted} if for every $\Qx$-quotient $G/N$ of $G$ and every $\gp \in \go$ the $\gp$-primary component $\gt_{\gp}(G/N)$ of $G/N$ is finitely $\Qx$-generated (hence, finite $\gp$-type).
\end{defn}

The class of $\go$-restricted abelian $\Qx$-groups will be denoted by $\AO.$ Note that every $\go$-restricted abelian $\Qx$-group satisfies Condition ($B_{\go}$) since \nilQx s of finite $\gp$-type are bounded.

\begin{lem}\label{l:FGAbel}
Let $\go$ be a set of primes in $\Qx.$ Every finitely $\Qx$-generated abelian $\Qx$-group is $\go$-restricted.
\end{lem}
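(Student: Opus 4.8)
The plan is to deduce the statement from the noetherianity of $\Qx$ via Theorem~\ref{t:MAX}, so essentially no new construction is needed. First I would fix a finitely $\Qx$-generated abelian $\Qx$-group $G$ and an arbitrary $\Qx$-quotient $G/N$; since $G$ is abelian, $N$ is automatically a normal $\Qx$-subgroup, and the images of a finite $\Qx$-generating set of $G$ form a finite $\Qx$-generating set of $G/N$, so $G/N$ is again finitely $\Qx$-generated.

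Next, fix a prime $\gp \in \go$ (which we may take monic, as $\go \subseteq \mathbb{M}$). By Theorem~\ref{t:TorsionIsNormal}, applied to the one-element set of primes $\{\gp\}$ and to the group $G/N$, the $\gp$-primary component $\gt_{\gp}(G/N)$ is a normal $\Qx$-subgroup of $G/N$; and by Definition~\ref{d:TORSION} it is a $\gp$-torsion group. Since $\Qx$ is a PID, hence a noetherian domain, Theorem~\ref{t:MAX} applies to the finitely $\Qx$-generated (nilpotent) $\Qx$-group $G/N$ and shows that its $\Qx$-subgroup $\gt_{\gp}(G/N)$ is finitely $\Qx$-generated.

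A finitely $\Qx$-generated $\gp$-torsion group is of finite $\gp$-type by Definition~\ref{d:finite0-type}; in particular $\gt_{\gp}(G/N)$ is finitely $\Qx$-generated for every $\gp \in \go$. As the $\Qx$-quotient $G/N$ was arbitrary, Definition~\ref{d:Abelian wrestricted} then gives that $G$ is $\go$-restricted.

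The argument is short and there is no serious obstacle; the one point deserving care is the verification that $\gt_{\gp}(G/N)$ genuinely is a $\Qx$-subgroup of $G/N$, so that Theorem~\ref{t:MAX} may legitimately be invoked, which is precisely the content of Theorem~\ref{t:TorsionIsNormal}. One could alternatively bypass Theorem~\ref{t:MAX} by passing to the $\Qx$-torsion submodule of $G/N$, noting it is finitely $\Qx$-generated and hence bounded, and then applying Theorem~\ref{t:Bounded Module} together with Theorem~\ref{t:DecompositionTheorem} to split it into finitely many cyclic primary pieces; but invoking Theorem~\ref{t:MAX} directly is cleaner.
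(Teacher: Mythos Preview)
Your proposal is correct and follows essentially the same route as the paper: pass to an arbitrary quotient $G/N$, observe it is again finitely $\Qx$-generated, and then invoke Theorem~\ref{t:MAX} to conclude that the $\Qx$-subgroup $\gt_{\gp}(G/N)$ is finitely $\Qx$-generated. The only difference is that you make explicit the justification (via Theorem~\ref{t:TorsionIsNormal}) that $\gt_{\gp}(G/N)$ is indeed a $\Qx$-subgroup, which the paper leaves implicit.
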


\begin{proof}
If $G$ is a finitely $\Qx$-generated abelian $\Qx$-group and $N \leq_{\Qx}G,$ then $G/N$ is also finitely $\Qx$-generated abelian. By Theorem~\ref{t:MAX}, any $\Qx$-subgroup of $G/N$ is finitely $\Qx$-generated. In particular, $\gt_{\gp}(G/N)$ is finitely $\Qx$-generated.
\end{proof}

\begin{defn}\label{d:Nilpotent wrestricted}
Let $\go$ be a set of primes in $\Qx.$ A \nilQx \ is \emph{$\go$-restricted} if it has a finite central $\Qx$-series with $\go$-restricted abelian $\Qx$-factors.
\end{defn}

The class of $\go$-restricted \nilQx s will be denoted by $\NO.$ Clearly, $\AO \subset \NO.$

\begin{lem}\label{l:FGNilp}
Let $\go$ be a set of primes in $\Qx.$ Every finitely $\Qx$-generated \nilQx \ is $\go$-restricted.
\end{lem}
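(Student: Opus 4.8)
The plan is to reduce the nilpotent case to the abelian case handled by Lemma~\ref{l:FGAbel}, using the fact that over a noetherian ring a finitely $\Qx$-generated nilpotent $R$-powered group has a good central $\Qx$-series. First I would invoke Theorem~\ref{t:MAX}: since $\Qx$ is a noetherian domain, every $\Qx$-subgroup of a finitely $\Qx$-generated \nilQx{} $G$ is again finitely $\Qx$-generated. In particular the upper central $\Qx$-series $1 = \gz_{0}G \unlhd_{\Qx} \gz_{1}G \unlhd_{\Qx} \cdots \unlhd_{\Qx} \gz_{c}G = G$ (which exists and terminates since $G$ is nilpotent, and whose terms are $\Qx$-subgroups by the preliminaries) has each term finitely $\Qx$-generated. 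This series is central by construction, so it is a finite central $\Qx$-series of $G$.

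Next I would check that each factor $\gz_{i+1}G/\gz_{i}G$ is an $\go$-restricted abelian $\Qx$-group. Each such factor is abelian (by definition of the upper central series) and, being a $\Qx$-quotient of the finitely $\Qx$-generated group $\gz_{i+1}G$, is itself finitely $\Qx$-generated abelian. Lemma~\ref{l:FGAbel} then says precisely that a finitely $\Qx$-generated abelian $\Qx$-group is $\go$-restricted. Hence every factor of the series lies in $\AO$, and $G$ has a finite central $\Qx$-series with $\go$-restricted abelian $\Qx$-factors, which is the definition (Definition~\ref{d:Nilpotent wrestricted}) of $G$ being $\go$-restricted.

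There is no real obstacle here; the only point requiring a little care is making sure the terms of the upper central $\Qx$-series really are $\Qx$-subgroups and really are finitely $\Qx$-generated — the first is recorded in the preliminaries (all upper central subgroups of a \nilR{} are $R$-subgroups) and the second is exactly Theorem~\ref{t:MAX} applied inside the finitely $\Qx$-generated group $G$. One could equally well argue by induction on the nilpotency class, but the direct appeal to the upper central series is cleanest. The statement holds for every set of primes $\go$ simultaneously, since Lemma~\ref{l:FGAbel} does.
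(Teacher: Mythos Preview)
Your proof is correct and follows essentially the same approach as the paper: take a finite central $\Qx$-series of $G$, use Theorem~\ref{t:MAX} (noetherianity of $\Qx$) to see that each term is finitely $\Qx$-generated, hence each abelian factor is finitely $\Qx$-generated, and conclude with Lemma~\ref{l:FGAbel}. The only cosmetic difference is that you fix the upper central series while the paper works with an arbitrary central $\Qx$-series.
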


\begin{proof}
Let $G$ be a finitely $\Qx$-generated \nilQx. Suppose that
\[
\{1\} = G_{0} \leq_{\Qx} G_{1} \leq_{\Qx} \cdots \leq_{\Qx} G_{n} = G
\]
is any central $\Qx$-series. By Theorem~\ref{t:MAX}, each $G_{i}$ is finitely $\Qx$-generated. Thus, each $G_{i + 1}/G_{i}$ is finitely $\Qx$-generated abelian. The result follows from Lemma~\ref{l:FGAbel}.
\end{proof}

We record a useful lemma. Its proof is omitted.

\begin{lem}\label{l:Identities}
Let $G$ be a \nilR, and let $A, \ B,$ and $C$ be $R$-subgroups of $G.$ If $B \unlhd_{R} A,$ then $(B \cap C) \unlhd_{R} (A \cap C)$ and
\[
\frac{A \cap C}{B \cap C} \cong_{R} \frac{B(A \cap C)}{B}.
\]
If $C \unlhd_{R} G$ as well, then $BC \unlhd_{R} AC$ and
\[
\frac{AC}{BC} \cong_{R} \frac{A}{B(A \cap C)}.
\]
\end{lem}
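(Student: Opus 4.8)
The plan is to obtain both isomorphisms from the $R$-isomorphism theorems recorded in the Preliminaries, imitating the standard proofs of the second and third (``diamond'') isomorphism theorems for ordinary groups; the only extra care needed is to confirm that every subgroup produced along the way is an $R$-subgroup and that each claimed normality is stable under the $R$-action, which in each case reduces to axiom (2) for \nilR s and the closure facts for $R$-subgroups (notably: $HN \leq_R G$ when $H \leq_R G$ and $N \unlhd_R G$).

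For the first statement, I would first note that $B(A\cap C)$ is an $R$-subgroup of $A$: indeed $A\cap C \leq_R A$ and $B \unlhd_R A$, so $B(A\cap C) \leq_R A$ by the product fact applied inside $A$, and clearly $B \unlhd_R B(A\cap C)$. Next, $B\cap C \unlhd_R A\cap C$: for $x \in B\cap C$ and $a \in A\cap C$ we get $a^{-1}xa \in a^{-1}Ba = B$ since $B \unlhd_R A$ and $a \in A$, while $a^{-1}xa \in C$ since $x,a \in C$; closure under $R$-powers is immediate from $(a^{-1}xa)^{\ga} = a^{-1}x^{\ga}a$. Now consider the composite $R$-homomorphism $A\cap C \hookrightarrow B(A\cap C) \twoheadrightarrow B(A\cap C)/B$. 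It is onto because every element of $B(A\cap C)$ is congruent modulo $B$ to an element of $A\cap C$, and its kernel is $(A\cap C)\cap B = B\cap C$ (using $B \subseteq A$). The first $R$-isomorphism theorem then yields $\frac{A\cap C}{B\cap C} \cong_R \frac{B(A\cap C)}{B}$.

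For the second statement, assume in addition $C \unlhd_R G$. Then $AC$ and $BC$ are $R$-subgroups of $G$, and $A\cap C \unlhd_R A$ (since $C \unlhd_R G$), whence $B(A\cap C) \unlhd_R A$ as a product of $R$-normal subgroups of $A$. To see $BC \unlhd_R AC$, take $a \in A$ and $c \in C$; then $(ac)^{-1}(BC)(ac) = c^{-1}(a^{-1}Ba)(a^{-1}Ca)c = c^{-1}BCc$, and for $b \in B$ we have $c^{-1}bc = b\,(b^{-1}c^{-1}bc) \in BC$ because $b^{-1}c^{-1}bc \in C$ by normality of $C$, so $c^{-1}BCc \subseteq BC$, with $R$-power closure again from axiom (2). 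Define $\vartheta\colon A \to AC/BC$ by $\vartheta(a) = aBC$: this is an $R$-homomorphism, it is onto since $acBC = aBC$ for $c \in C$, and $\ker\vartheta = A\cap BC$. I would then check $A\cap BC = B(A\cap C)$: the inclusion $\supseteq$ is clear, and if $a = bc$ with $b \in B$, $c \in C$ and $a \in A$, then $c = b^{-1}a \in A$, so $c \in A\cap C$ and $a \in B(A\cap C)$. The first $R$-isomorphism theorem gives $\frac{AC}{BC} \cong_R \frac{A}{B(A\cap C)}$.

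There is no substantive obstacle: the argument is entirely formal. The only points needing genuine (if minor) attention are the two kernel identities $(A\cap C)\cap B = B\cap C$ and $A\cap BC = B(A\cap C)$, and the systematic — but routine — verification that each product that occurs is an $R$-subgroup and each normality claim is preserved by the $R$-action, all of which follow from axiom (2) and the $R$-subgroup closure properties stated earlier.
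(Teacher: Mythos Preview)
Your argument is correct and is exactly the standard verification of the two diamond isomorphism identities, carried over verbatim to the $R$-powered setting via axiom~(2) and the product fact $HN\leq_R G$ from the Preliminaries. The paper itself omits the proof of this lemma entirely (``Its proof is omitted''), so there is no approach to compare against; your write-up would serve perfectly well as the missing justification.
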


\begin{prop}\label{p:Class Closures}
Let $\go$ be a set of primes in $\Qx.$ Every $\Qx$-subgroup and $\Qx$-quotient of an $\AO$-group ($\NO$-group) is an $\AO$-group ($\NO$-group).
\end{prop}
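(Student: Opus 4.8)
The plan is to reduce the nilpotent statement to the abelian one via an induced central $\Qx$-series, so I would handle abelian $\Qx$-groups first.

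\emph{Abelian case.} Let $G \in \AO$. For a $\Qx$-quotient $G/M$ (with $M \unlhd_{\Qx} G$), I would note that every $\Qx$-quotient of $G/M$ is, by the $\Qx$-isomorphism theorems, $\Qx$-isomorphic to $G/N$ for some $M \leq_{\Qx} N \leq_{\Qx} G$; hence its $\gp$-primary component is $\Qx$-isomorphic to $\gt_{\gp}(G/N)$, which is finitely $\Qx$-generated since $G \in \AO$, giving $G/M \in \AO$. For a $\Qx$-subgroup $H \leq_{\Qx} G$ and a $\Qx$-quotient $H/N$: since $G$ is abelian, $N \unlhd_{\Qx} G$, so $H/N \leq_{\Qx} G/N$, and because being $\gp$-torsion is intrinsic, $\gt_{\gp}(H/N) \leq_{\Qx} \gt_{\gp}(G/N)$. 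The latter is finitely $\Qx$-generated ($G \in \AO$), so Theorem~\ref{t:MAX} (recall $\Qx$ is a noetherian domain) forces $\gt_{\gp}(H/N)$ to be finitely $\Qx$-generated as well; thus $H \in \AO$.

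\emph{Nilpotent case.} Fix a finite central $\Qx$-series $\{1\} = G_0 \leq_{\Qx} \cdots \leq_{\Qx} G_n = G$ with each $G_{i+1}/G_i \in \AO$ (so each $G_i \unlhd_{\Qx} G$). For $H \leq_{\Qx} G$ I would take $H_i = H \cap G_i$; then $H_i \unlhd_{\Qx} H$ and $[H_{i+1}, H] \subseteq [G_{i+1}, G] \cap H \subseteq G_i \cap H = H_i$, so the $H_i$ form a central $\Qx$-series of $H$, and Lemma~\ref{l:Identities} (with $A = G_{i+1}$, $B = G_i$, $C = H$) gives
\[
\frac{H_{i+1}}{H_i} \;=\; \frac{G_{i+1} \cap H}{G_i \cap H} \;\cong_{\Qx}\; \frac{G_i(G_{i+1} \cap H)}{G_i} \;\leq_{\Qx}\; \frac{G_{i+1}}{G_i},
\]
a $\Qx$-subgroup of an $\AO$-group, hence in $\AO$ by the abelian case; so $H \in \NO$. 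For $M \unlhd_{\Qx} G$ I would instead take $\bar G_i = G_i M/M$; since $G_i M \unlhd_{\Qx} G$ and $[G_{i+1}M, G] \subseteq G_i M$, the $\bar G_i$ form a central $\Qx$-series of $G/M$, and Lemma~\ref{l:Identities} (with $A = G_{i+1}$, $B = G_i$, $C = M$) gives
\[
\frac{\bar G_{i+1}}{\bar G_i} \;\cong_{\Qx}\; \frac{G_{i+1}M}{G_i M} \;\cong_{\Qx}\; \frac{G_{i+1}}{G_i(G_{i+1} \cap M)},
\]
a $\Qx$-quotient of $G_{i+1}/G_i \in \AO$, hence in $\AO$ by the abelian case; so $G/M \in \NO$.

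I do not expect a genuine obstacle here; this is the usual ``closed under subgroups and quotients'' argument. The only places needing attention are invoking Theorem~\ref{t:MAX} at exactly the point of the abelian subgroup case where the full strength of ``$\go$-restricted'' (as opposed to Condition~($B_{\go}$)) is used, and checking that the intersected and projected series are genuine \emph{central} $\Qx$-series whose successive factors are identified, through Lemma~\ref{l:Identities}, with a $\Qx$-subgroup (resp.\ $\Qx$-quotient) of each $\AO$-factor of the original series.
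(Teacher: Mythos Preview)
Your proof is correct and follows essentially the same route as the paper: the abelian case is handled via $\gt_{\gp}(H/N)\leq_{\Qx}\gt_{\gp}(G/N)$ together with Theorem~\ref{t:MAX} for subgroups and the third $\Qx$-isomorphism theorem for quotients, and the nilpotent case is reduced to the abelian one using the intersected and projected central $\Qx$-series with Lemma~\ref{l:Identities}. The only difference is cosmetic: you spell out the commutator checks that the induced series are central, whereas the paper simply asserts this.
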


\begin{proof}
(i) \ Let $G \in \AO$ and $H \leq_{\Qx}G.$ Suppose $K \leq_{\Qx}H$ and choose $\gp \in \go.$ Clearly, $\gt_{\gp}(H/K) \leq_{\Qx} \gt_{\gp}(G/K).$ Since $\gt_{\gp}(G/K)$ is finitely $\Qx$-generated, so is $\gt_{\gp}(H/K)$ by Theorem~\ref{t:MAX}. Hence, $H \in \AO.$

Next let $N \leq_{\Qx} G$ and $M/N \leq_{\Qx} G/N.$ For any $\gp \in \go,$ $\gt_{\gp}(G/M)$ is finitely $\Qx$-generated because $G \in \AO.$ Since $G/M \cong_{\Qx} (G/N)/(M/N),$ we have $G/N \in \AO.$

\vspace{.1in}

\noindent (ii) \ Now suppose that $G \in \NO$ with a central $\Qx$-series
\[
\{1\} = G_{0} \leq_{\Qx} G_{1} \leq_{\Qx} \cdots \leq_{\Qx} G_{n} = G
\]
such that each $G_{i + 1}/G_{i}$ is $\go$-restricted abelian. If $H \leq_{\Qx} G,$ then
\[
\{1\} = G_{0} \cap H \leq_{\Qx} G_{1} \cap H \leq_{\Qx} \cdots \leq_{\Qx} G_{n} \cap H = H
\]
is a central $\Qx$-series for $H.$ By Lemma~\ref{l:Identities}, $(G_{i + 1} \cap H)/(G_{i} \cap H)$ is $\Qx$-isomorphic to a $\Qx$-subgroup of $G_{i + 1}/G_{i}.$ Since $G_{i + 1}/G_{i} \in \AO,$ so is $(G_{i + 1} \cap H)/(G_{i} \cap H)$ by (i) above. Hence, $H \in \NO.$

Next let $N \unlhd_{\Qx}G.$ The quotient $\Qx$-group $G/N$ has a central $\Qx$-series
\[
\{1\} = G_{0}N/N \leq_{\Qx} G_{1}N/N \leq_{\Qx} \cdots \leq_{\Qx} G_{n}N/N = G/N.
\]
By Lemma~\ref{l:Identities} and a $\Qx$-isomorphism theorem, $(G_{i + 1}N/N)/(G_{i}N/N)$ is $\Qx$-isomorphic to $(G_{i + 1}/G_{i})/(G_{i}(G_{i + 1} \cap N)/G_{i}).$ Hence, $(G_{i + 1}N/N)/(G_{i}N/N)$ is $\Qx$-isomorphic to a $\Qx$-quotient of $G_{i + 1}/G_{i}$ and, consequently, $G/N \in \NO$ by (i).
\end{proof}

\vspace{.2in}

\noindent \underline{Notation}: Let $\go$ be a set of primes in $\Qx.$ The class of \nilQx s having a finite subnormal $\Qx$-series with $\go$-restricted abelian $\Qx$-factors will be denoted by $\SO.$

\vspace{.1in}

Note that for a set of primes $\go$ of $\Qx,$ we have that $\NO \subset \SO$ since every central $\Qx$-series is a subnormal $\Qx$-series.

\begin{prop}\label{p:Class Closure 2}
Let $\go$ be a set of primes in $\Qx.$ Every $\Qx$-subgroup and $\Qx$-quotient of an $\SO$-group is an $\SO$-group.
\end{prop}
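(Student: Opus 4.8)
The plan is to mimic the proof of Proposition~\ref{p:Class Closures} almost verbatim, replacing ``central $\Qx$-series'' by ``subnormal $\Qx$-series'' throughout, and invoking Lemma~\ref{l:Identities} at the two decisive points. So I would start with $G \in \SO$ equipped with a finite subnormal $\Qx$-series
\[
\{1\} = G_{0} \unlhd_{\Qx} G_{1} \unlhd_{\Qx} \cdots \unlhd_{\Qx} G_{n} = G
\]
in which every factor $G_{i+1}/G_{i}$ is $\go$-restricted abelian, i.e. lies in $\AO$.

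For the $\Qx$-subgroup case, given $H \leq_{\Qx} G$ I would consider the chain $G_{i} \cap H$. Applying the first assertion of Lemma~\ref{l:Identities} with $A = G_{i+1}$, $B = G_{i}$, $C = H$ (legitimate since $G_{i} \unlhd_{\Qx} G_{i+1}$) shows that $G_{i} \cap H \unlhd_{\Qx} G_{i+1} \cap H$ and that
\[
\frac{G_{i+1} \cap H}{G_{i} \cap H} \cong_{\Qx} \frac{G_{i}(G_{i+1} \cap H)}{G_{i}} \leq_{\Qx} \frac{G_{i+1}}{G_{i}}.
\]
Thus each factor of the chain $\{G_{i} \cap H\}$ is $\Qx$-isomorphic to a $\Qx$-subgroup of $G_{i+1}/G_{i} \in \AO$, hence is itself abelian and in $\AO$ by Proposition~\ref{p:Class Closures}(i). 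So $\{G_{i} \cap H\}$ is a subnormal $\Qx$-series for $H$ with $\go$-restricted abelian $\Qx$-factors, giving $H \in \SO$. For the $\Qx$-quotient case, given $N \unlhd_{\Qx} G$ I would take the chain $G_{i}N/N$ in $G/N$. The second assertion of Lemma~\ref{l:Identities} with $A = G_{i+1}$, $B = G_{i}$, $C = N$ yields $G_{i}N \unlhd_{\Qx} G_{i+1}N$ (so $G_{i}N/N \unlhd_{\Qx} G_{i+1}N/N$ by the correspondence theorem) together with
\[
\frac{G_{i+1}N}{G_{i}N} \cong_{\Qx} \frac{G_{i+1}}{G_{i}(G_{i+1} \cap N)},
\]
which is a $\Qx$-quotient of $G_{i+1}/G_{i}$. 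Combining with a $\Qx$-isomorphism theorem, $(G_{i+1}N/N)/(G_{i}N/N) \cong_{\Qx} (G_{i+1}N)/(G_{i}N)$ is $\Qx$-isomorphic to a $\Qx$-quotient of $G_{i+1}/G_{i} \in \AO$, hence lies in $\AO$ by Proposition~\ref{p:Class Closures}(i) again. Therefore $\{G_{i}N/N\}$ witnesses $G/N \in \SO$.

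I do not expect a genuine obstacle here: once Lemma~\ref{l:Identities} and Proposition~\ref{p:Class Closures} are available, the argument is pure bookkeeping. The one point meriting a moment's care — and precisely the place where relaxing ``central'' to ``subnormal'' is felt — is checking that the pushed/pulled chains $\{G_{i} \cap H\}$ and $\{G_{i}N/N\}$ remain subnormal; this is exactly what the normality clauses of Lemma~\ref{l:Identities} (``$(B \cap C) \unlhd_{R} (A \cap C)$'' and ``$BC \unlhd_{R} AC$'') supply, using only $G_{i} \unlhd_{\Qx} G_{i+1}$ and, in the quotient case, normality of $N$ in $G$. No appeal to centrality is needed, so the proof of Proposition~\ref{p:Class Closures} goes through for $\SO$ unchanged.
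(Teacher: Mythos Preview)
Your proposal is correct and is exactly the approach the paper takes: the paper's own proof consists of the single sentence ``Replace `central' with `subnormal' in the proof of Proposition~\ref{p:Class Closures},'' and you have faithfully unpacked that instruction, including the use of Lemma~\ref{l:Identities} to verify that the intersected and pushed-forward chains remain subnormal.
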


\begin{proof}
Replace ``central" with ``subnormal" in the proof of Proposition~\ref{p:Class Closures}.
\end{proof}

\begin{prop}\label{p:Abelian SOGroup is AOGroup}
Let $\go$ be a set of primes in $\Qx.$ An abelian $\Qx$-group is $\go$-restricted if and only if it is in $\SO.$
\end{prop}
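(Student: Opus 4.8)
The plan is to prove the two implications separately. One of them is immediate: if $G$ is an $\go$-restricted abelian $\Qx$-group, then $\{1\} \leq_{\Qx} G$ is a finite subnormal $\Qx$-series whose only $\Qx$-factor $G \cong_{\Qx} G/\{1\}$ is $\go$-restricted abelian, so $G \in \SO$.

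For the converse, suppose $G$ is abelian and $G \in \SO$. Since $\SO$ is closed under $\Qx$-quotients (Proposition~\ref{p:Class Closure 2}), every $\Qx$-quotient $G/N$ of $G$ is again an abelian group in $\SO$; hence it suffices to prove: \textit{if $A$ is an abelian $\Qx$-group with a subnormal $\Qx$-series $\{1\} = A_{0} \leq_{\Qx} A_{1} \leq_{\Qx} \cdots \leq_{\Qx} A_{n} = A$ whose factors are $\go$-restricted abelian, then $\gt_{\gp}(A)$ is finitely $\Qx$-generated for every $\gp \in \go$} --- applying this to each $G/N$ then yields $G \in \AO$. I would prove this by induction on $n$, the base case $n = 1$ being the definition of $\AO$ applied to $A \cong_{\Qx} A/\{1\}$.

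For the inductive step, fix $\gp \in \go$ and let $\gfi\colon A \to A/A_{n-1}$ be the canonical $\Qx$-homomorphism. The group $A_{n-1}$ is abelian and inherits a subnormal $\Qx$-series of length $n-1$ with $\go$-restricted abelian factors, so by induction $\gt_{\gp}(A_{n-1})$ is finitely $\Qx$-generated; and $A/A_{n-1}$ is $\go$-restricted abelian, so $\gt_{\gp}(A/A_{n-1})$ is finitely $\Qx$-generated. Because being $\gp$-torsion is an intrinsic property, $\ker\bigl(\gfi|_{\gt_{\gp}(A)}\bigr) = \gt_{\gp}(A) \cap A_{n-1} = \gt_{\gp}(A_{n-1})$, and clearly $\gfi\bigl(\gt_{\gp}(A)\bigr) \subseteq \gt_{\gp}(A/A_{n-1})$. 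The point requiring care is that $\gfi$ need not map $\gt_{\gp}(A)$ \emph{onto} $\gt_{\gp}(A/A_{n-1})$; however $\gfi\bigl(\gt_{\gp}(A)\bigr)$ is a $\Qx$-subgroup of the finitely $\Qx$-generated abelian $\Qx$-group $\gt_{\gp}(A/A_{n-1})$, hence is itself finitely $\Qx$-generated by Theorem~\ref{t:MAX} (using that $\Qx$ is a noetherian domain). Then $\gt_{\gp}(A)$ is $\Qx$-generated by $\gt_{\gp}(A_{n-1})$ together with finitely many preimages under $\gfi$ of a finite $\Qx$-generating set of $\gfi\bigl(\gt_{\gp}(A)\bigr)$, so $\gt_{\gp}(A)$ is finitely $\Qx$-generated. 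This closes the induction and finishes the proof.

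The only genuine obstacle is the non-surjectivity of $\gfi$ on $\gp$-torsion subgroups just noted; it is disposed of by passing to the image and invoking Theorem~\ref{t:MAX}. Everything else --- the intrinsic nature of $\gp$-torsion, the fact that a $\Qx$-module extension of two finitely $\Qx$-generated modules is finitely $\Qx$-generated, and the isomorphism theorems --- is routine.
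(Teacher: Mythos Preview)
Your proof is correct. The easy direction is handled exactly as in the paper (``the converse is clear''), and your inductive argument for the hard direction is sound; in particular, you correctly identify the only delicate point---that $\gfi$ need not carry $\gt_{\gp}(A)$ onto $\gt_{\gp}(A/A_{n-1})$---and dispose of it cleanly via Theorem~\ref{t:MAX}.

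The paper's argument is organized differently and sidesteps that delicate point altogether. Rather than inducting on the length of a series for $A = G/H$, the paper first passes to $T = \gt_{\gp}(G/H)$ and invokes closure of $\SO$ under $\Qx$-\emph{subgroups} (Proposition~\ref{p:Class Closure 2}) to conclude $T \in \SO$. Now $T$ itself carries a subnormal $\Qx$-series $\{1\} = T_{0} \leq_{\Qx} \cdots \leq_{\Qx} T_{k} = T$ with $\go$-restricted abelian factors, and since $T$ is $\gp$-torsion, so is every $T_{i}$ and hence every factor $T_{i+1}/T_{i}$. Thus $T_{i+1}/T_{i} = \gt_{\gp}(T_{i+1}/T_{i})$ is finitely $\Qx$-generated outright, and $T$ is finitely $\Qx$-generated as a finite iterated extension---no surjectivity issue arises. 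What the paper's maneuver buys is the avoidance of the appeal to Theorem~\ref{t:MAX} on the image side; what your approach buys is a self-contained induction that does not need closure of $\SO$ under subgroups, only under quotients. Both are short and valid.
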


\begin{proof}
Let $G$ be an abelian $\Qx$-group in $\SO,$ and let $H \leq_{\Qx} G.$ By Proposition~\ref{p:Class Closure 2}, $G/H \in \SO$ and, thus, $\gt_{\gb}(G/H) \in \SO$ for any $\gb \in \go.$ Fix a prime $\gp \in \go,$ and put $T = \gt_{\gp}(G/H).$ We claim that $T$ is finitely $\Qx$-generated, thus proving that $G$ is an $\AO$-group.

Since $T \in \SO,$ it has a $\Qx$-series
\begin{equation}\label{e:SUBNORMAL}
\{1\} = T_{0} \leq_{\Qx} T_{1} \leq_{\Qx} \cdots \leq_{\Qx} T_{k} = T
\end{equation}
with $\go$-restricted abelian $\Qx$-factors. Thus, each $\gt_{\gp}(T_{i + 1}/T_{i})$ is finitely $\Qx$-generated. Since each term in (\ref{e:SUBNORMAL}) is $\gp$-torsion, each $T_{i + 1}/T_{i}$ is $\gp$-torsion as well. Hence, $\gt_{\gp}(T_{i + 1}/T_{i}) = T_{i + 1}/T_{i}$ and thus, $T_{i + 1}/T_{i}$ is finitely $\Qx$-generated. And so, $T$ is finitely $\Qx$-generated as claimed. The converse is clear.
\end{proof}

\begin{cor}\label{c:Abelian SOGroup is AOGroup}
Let $\go$ be a set of primes in $\Qx.$ Every abelian $\Qx$-group in $\NO$ is $\go$-restricted.
\end{cor}

\begin{proof}
This follows from Proposition~\ref{p:Abelian SOGroup is AOGroup} and the fact that $\NO \subset \SO.$
\end{proof}

Our next goal is to prove that for any set of primes $\go$ in $\Qx,$ every $\go$-restricted \nilQx \ has property $\GO$ for its family of normal $\Qx$-subgroups. First we prove some preliminary results.

\begin{prop}\label{p:PREPARATORY PROPOSITION 1}
Let $\go$ be a set of primes in $\Qx$ and $G \in \NO.$ If $\gm$ is any $\go$-member, then all $\Qx$-factors of any central $\Qx$-series of $G/G^{\gm}$ are finite $\go$-type. Consequently, $G/G^{\gm}$ is finite $\go$-type.
\end{prop}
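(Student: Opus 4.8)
The plan is to reduce everything to the abelian case by means of the class-closure results established above, and then to exploit the fact that the exponent $\gm$ of $G/G^{\gm}$ is an $\go$-member with only finitely many monic prime divisors. To begin with, Proposition~\ref{p:Class Closures} shows that $G/G^{\gm} \in \NO$, and hence so does every $\Qx$-subgroup and every $\Qx$-quotient of $G/G^{\gm}$. Moreover $G/G^{\gm}$ is bounded with $\gm \in Ann(G/G^{\gm})$; since $\gm$ is an $\go$-member, every element of $G/G^{\gm}$ is $\go$-torsion, and therefore every $\Qx$-subgroup and $\Qx$-quotient of $G/G^{\gm}$ is an $\go$-torsion group lying in $\NO$.

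Next I would analyze a single $\Qx$-factor. Fix an arbitrary central $\Qx$-series of $G/G^{\gm}$ and let $F$ be one of its factors. Then $F$ is an abelian $\Qx$-group which is a $\Qx$-quotient of a $\Qx$-subgroup of $G/G^{\gm}$, so by the previous paragraph $F$ lies in $\NO$ and is $\go$-torsion; since $F$ is abelian, $F$ is $\go$-restricted by Corollary~\ref{c:Abelian SOGroup is AOGroup}. By Theorem~\ref{t:DecompositionTheorem}, $F = \prod_{\gp \in \go}\gt_{\gp}(F)$. Since $\gm \in Ann(F)$, a routine coprimality argument shows that $\gt_{\gp}(F) = \{1\}$ whenever the monic prime $\gp$ does not divide $\gm$; as $\gm$ has only finitely many monic prime divisors, $F$ is in fact the direct product of the finitely many groups $\gt_{\gp}(F)$ with $\gp \in \go$ and $\gp \mid \gm$. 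Each such $\gt_{\gp}(F)$ is finitely $\Qx$-generated because $F$ is $\go$-restricted, whence $F$ is finitely $\Qx$-generated; being also $\go$-torsion, $F \in \FTO$. This proves that every $\Qx$-factor of every central $\Qx$-series of $G/G^{\gm}$ is of finite $\go$-type.

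For the final statement, recall that $G/G^{\gm} \in \NO$ admits a finite central $\Qx$-series, all of whose factors are of finite $\go$-type by the step just completed. An induction on the length of this series then gives $G/G^{\gm} \in \FTO$, using that finite $\Qx$-generation passes through extensions and that, by Theorem~\ref{t:TorsionIsNormal}, so does the property of being an $\go$-torsion group. (Alternatively, once $G/G^{\gm} \in \FTO$ is in hand, Corollary~\ref{c:MAX} recovers the statement that every $\Qx$-subgroup of $G/G^{\gm}$, and hence every factor of every central $\Qx$-series, is of finite $\go$-type.)

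The only step with genuine content is the treatment of the factor $F$: the decisive observation is that boundedness of $G/G^{\gm}$ by the $\go$-member $\gm$ at once forces the $\go$-torsion condition on every subquotient and confines the primary decomposition of each such factor to the finitely many primes dividing $\gm$, each of which — by $\go$-restrictedness — contributes only a finitely $\Qx$-generated summand. The remaining verifications are routine applications of the $R$-isomorphism theorems together with Proposition~\ref{p:Class Closures} and Corollary~\ref{c:Abelian SOGroup is AOGroup}.
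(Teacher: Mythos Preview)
Your argument is correct and follows essentially the same route as the paper: reduce to an abelian factor via Proposition~\ref{p:Class Closures} and Corollary~\ref{c:Abelian SOGroup is AOGroup}, observe that $\gm$ annihilates the factor so only the finitely many primes dividing $\gm$ contribute to its primary decomposition (Theorem~\ref{t:DecompositionTheorem}), and conclude finite $\Qx$-generation from $\go$-restrictedness. The paper phrases this by introducing the finite set $\gs$ of monic prime divisors of $\gm$ and writing $\gt_{\go}(A) \cong_{\Qx} \prod_{\gp \in \gs}\gt_{\gp}(A)$, but the content is the same.
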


\begin{proof}
As usual, we may assume that all primes in $\go$ and all $\go$-members are monic. Let $\gm$ be an $\go$-member and denote the set of monic prime divisors of $\gm$ by $\gs.$ Clearly, $\gs$ is a finite set and $\gm$ is a $\gs$-member. Define the set $\gs '$ by $\gs ' = \go \setminus \gs.$ Suppose that
\[
\{1\} = G^{\gm}/G^{\gm} \leq_{\Qx} G_{1}/G^{\gm} \leq_{\Qx} \cdots \leq_{\Qx} G_{n}/G^{\gm} = G/G^{\gm}
\]
is an arbitrary central $\Qx$-series for $G/G^{\gm},$ and let $A = (G_{i + 1}/G^{\gm})/(G_{i}/G^{\gm})$ be any one of its $\Qx$-factors. We claim that $A$ is finite $\go$-type.

By Proposition~\ref{p:Class Closures}, $G/G^{\gm} \in \NO$ because $G \in \NO.$ It follows from Proposition~\ref{p:Class Closures} and Corollary~\ref{c:Abelian SOGroup is AOGroup} that $A \in \AO.$ Hence, $\gt_{\gp}(A)$ is finitely $\Qx$-generated for every $\gp \in \go.$ Now, $A$ is a $\gs$-torsion group because $G_{k}/G^{\gm}$ is $\gs$-torsion for every $k = 1, \ 2, \ \ldots, \ n.$

Thus, by Theorem~\ref{t:DecompositionTheorem},
\[
\gt_{\go}(A) = \prod_{\gp \in \gs}\gt_{\gp}(A) \times \prod_{\gp ' \in \gs '}\gt_{\gp '}(A) \cong_{\Qx} \prod_{\gp \in \gs}\gt_{\gp}(A).
\]
Since $\gs$ is finite, $\gt_{\go}(A)$ is finitely $\Qx$-generated. Therefore, $A$ is finite $\go$-type as claimed. Since there are finitely many terms in the central $\Qx$-series, $G/G^{\gm}$ is finite $\go$-type.
\end{proof}

\begin{prop}\label{p:PREPARATORY PROPOSITION 2}
Suppose that $\go$ is a set of primes in $\Qx$ and $G \in \NO.$ If $1 \neq g \in G$ has bounded order an $\go$-member, then $g \notin G^{\gb}$ for some $\go$-member $\gb.$
\end{prop}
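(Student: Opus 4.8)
The plan is to reduce to the case where $g$ has prime-power order and then exhibit an explicit $\go$-member $\gb$ of the form $\gp^{e}$.

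\textbf{Reduction to prime-power order.} Write the order of $g$ as $\gd$, an $\go$-member, pick a monic prime $\gp \mid \gd$ (so $\gp \in \go$), and write $\gd = \gp^{a}\gd'$ with $a \geq 1$ and $\gp \nmid \gd'$. Put $h := g^{\gd'}$. A short computation in the PID $\Qx$ shows $Ann(h) = \langle \gp^{a}\rangle$, so $h \neq 1$ has order the prime power $\gp^{a}$; moreover, since each $G^{\gb}$ is a $\Qx$-subgroup of $G$, $g \in G^{\gb}$ forces $h = g^{\gd'} \in G^{\gb}$. Hence it suffices to find an $\go$-member $\gb$ with $h \notin G^{\gb}$, and so we may assume from the outset that $g$ itself has order $\gp^{a}$ for some $\gp \in \go$ and $a \geq 1$.

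\textbf{The key claim: $\gt_{\gp}(G)$ is bounded.} Since $\gt_{\gp}(G) \leq_{\Qx} G \in \NO$, Proposition~\ref{p:Class Closures} gives $\gt_{\gp}(G) \in \NO$, and it is $\gp$-torsion by construction. Take a finite central $\Qx$-series of $\gt_{\gp}(G)$ with $\go$-restricted abelian $\Qx$-factors. Each factor is $\gp$-torsion and $\go$-restricted abelian, hence equal to its own $\gp$-primary component, which is finitely $\Qx$-generated by the very definition of $\go$-restricted (apply it to the trivial quotient, using $\gp \in \go$). Running up the series, $\gt_{\gp}(G)$ is itself finitely $\Qx$-generated; being also $\gp$-torsion, it is of finite $\gp$-type, and therefore bounded (see the Examples in Section~2). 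Fix $e \in \N$ with $t^{\gp^{e}} = 1$ for every $t \in \gt_{\gp}(G)$.

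\textbf{Conclusion.} Suppose, for contradiction, that $g \in G^{\gp^{e}}$. Since $\Qx$ contains $\Q$, Theorem~\ref{t:divisible1} and the remark following it give $g = h^{\gp^{e}}$ for some $h \in G$. Then $h^{\gp^{e+a}} = (h^{\gp^{e}})^{\gp^{a}} = g^{\gp^{a}} = 1$, so $h \in \gt_{\gp}(G)$, whence $h^{\gp^{e}} = 1$ by the choice of $e$; that is, $g = 1$, contradicting $g \neq 1$. Therefore $g \notin G^{\gp^{e}}$, and $\gb := \gp^{e}$ is the desired $\go$-member. The step I expect to be the main obstacle is the boundedness of $\gt_{\gp}(G)$: it hinges on the fact that, for a \emph{single} prime $\gp$, a $\gp$-torsion $\go$-restricted abelian $\Qx$-group literally equals its finitely $\Qx$-generated $\gp$-primary component, so a central series with such factors forces finite generation. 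This is precisely why the reduction to prime-power order is essential — for an infinite $\go$, $\gt_{\go}(G)$ need be neither finitely $\Qx$-generated nor bounded, even when $G \in \NO$.
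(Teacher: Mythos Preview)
Your proof is correct and takes a somewhat different route from the paper's. The paper does \emph{not} reduce to a single prime: it works directly with the finite set $\gs$ of monic prime divisors of the order $\gm$ of $g$ and with a central $\Qx$-series $\{1\}=G_0\le_{\Qx}\cdots\le_{\Qx}G_n=G$ of $G$ itself. Since each factor is $\go$-restricted abelian and $\gs\subseteq\go$, each $\gt_{\gs}(G_{i+1}/G_i)=\prod_{\gp\in\gs}\gt_{\gp}(G_{i+1}/G_i)$ is finitely $\Qx$-generated and hence bounded; the paper then sets $\gb=\prod_{i}\exp\bigl(\gt_{\gs}(G_{i+1}/G_i)\bigr)$ and finishes with the same root/contradiction argument via Theorem~\ref{t:divisible1} that you use, noting that any $h\in\gt_{\gs}(G)$ satisfies $h^{\gb}=1$ by descending the series. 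Your approach instead passes to the subgroup $\gt_{\gp}(G)$ and extracts the clean auxiliary statement that $\gt_{\gp}(G)$ is of finite $\gp$-type for every $\gp\in\go$ when $G\in\NO$, which is a nice fact in its own right and yields $\gb$ as an explicit prime power; the price is that you must invoke closure of $\NO$ under $\Qx$-subgroups (Proposition~\ref{p:Class Closures}), which the paper's argument does not need at this point. (Small stylistic note: you reuse the symbol $h$ in the final paragraph after already assigning it in the reduction step; since you explicitly reset to ``assume $g$ has order $\gp^a$'' this causes no mathematical problem, but a fresh letter would read more smoothly.)
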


\begin{proof}
Suppose that $g$ has bounded order an $\go$-member $\gm,$ and let $\gs$ be the set of monic prime divisors of $\gm.$ Note that $\gs$ is a finite set and $\gm$ is a $\gs$-member. Since $G \in \NO$ and $\gs \subseteq \go,$ there exists a central $\Qx$-series
\[
\{1\} = G_{0} \leq_{\Qx} G_{1} \leq_{\Qx} \cdots \leq_{\Qx} G_{n} = G
\]
of $G$ such that each $\Qx$-factor $\gt_{\gp}(G_{i + 1}/G_{i})$ is finitely $\Qx$-generated for every $\gp \in \gs.$ By Theorem~\ref{t:DecompositionTheorem} and the fact that $\gs$ is a finite set, we see that
\[
\prod_{\gp \in \gs}\gt_{\gp}(G_{i + 1}/G_{i}) = \gt_{\gs}(G_{i + 1}/G_{i})
\]
is finitely $\Qx$-generated. Abbreviate the exponent of a \nilQx \ by ``exp" and define
\[
\gb = \prod_{i = 0}^{n - 1} \hbox{exp}\left(\gt_{\gs}(G_{i + 1}/G_{i}))\right).
\]
We claim that $g \notin G^{\gb}.$ According to Theorem~\ref{t:divisible1}, every element of $G^{\gb}$ can be written as a power of $\gb.$ If it were true that $g \in G^{\gb},$ then there would exist $h \in G$ such that $h^{\gb} = g.$ Hence, $h^{\gb \gm} = g^{\gm} = 1.$ Now, $\gb \gm$ is a $\gs$-member since $\gb$ and $\gm$ are both $\gs$-members. Thus, $h^{\gb \gm} = 1$ implies that $h \in \gt_{\gs}(G).$ Consequently, $h^{\gb} = 1.$ Therefore, $g = 1,$ a contradiction.
\end{proof}

\begin{thm}\label{t:ORestricted Nilpotent Is Residually FO}
Let $\go$ be a set of primes in $\Qx$ and $G \in \NO.$ Then, every $\Qx$-torsion element of $G$ is $\go$-torsion if and only if $G$ is residually $\FTO.$
\end{thm}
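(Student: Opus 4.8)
The plan is to prove the two implications separately, with the forward direction being the substantive one. For the easy direction, suppose $G$ is residually $\FTO$. If $g \in \gt(G)$ with $g \neq 1$, pick an $R$-homomorphism $\gfi$ onto some $K \in \FTO$ with $\gfi(g) \neq 1$. Since $g$ is $\Qx$-torsion, $g^{\ga} = 1$ for some nonzero $\ga \in \Qx$, hence $\gfi(g)^{\ga} = 1$; but $\gfi(g) \neq 1$ lies in the $\go$-torsion group $K$, so some $\go$-member kills $\gfi(g)$. I still need $g$ itself to be $\go$-torsion — the cleanest route is: the set of $g \in \gt(G)$ that are \emph{not} $\go$-torsion is exactly $\gt(G) \setminus \gt_{\go}(G)$, and for such $g$ one shows directly (using that no nontrivial power obtained from an $\go$-member relation can be separated in an $\FTO$-quotient, by the coprimality argument already used in Lemma~\ref{l:Seperable Implies Isolated}) that $\gfi(g) = 1$ for every $R$-homomorphism onto an $\FTO$-group, contradicting residual $\FTO$-ness applied to that $g$. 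So residually $\FTO$ forces $\gt(G) = \gt_{\go}(G)$, i.e. every $\Qx$-torsion element is $\go$-torsion.

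For the forward direction, assume every $\Qx$-torsion element of $G$ is $\go$-torsion, and let $1 \neq g \in G$; I must produce an $R$-homomorphism onto an $\FTO$-group not killing $g$. I would split into two cases according to whether $g$ is $\Qx$-torsion. If $g \in \gt(G) = \gt_{\go}(G)$, then $g$ has bounded order an $\go$-member (torsion elements in a nilpotent $R$-powered group over a PID have bounded order, and since it is an $\go$-torsion element that order is an $\go$-member), so Proposition~\ref{p:PREPARATORY PROPOSITION 2} gives an $\go$-member $\gb$ with $g \notin G^{\gb}$. Then $g$ has nontrivial image in $G/G^{\gb}$, and by Proposition~\ref{p:PREPARATORY PROPOSITION 1} the group $G/G^{\gb}$ is of finite $\go$-type; composing $G \to G/G^{\gb}$ with the identity already does the job. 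If $g \notin \gt(G)$, then $g$ is not $\Qx$-torsion; here I would quotient by $\gt(G)$ first: in $G/\gt(G)$ the image $\bar g$ is nontrivial and this quotient is $\Qx$-torsion-free, and for any fixed $\go$-member $\gm$ the image of $\bar g$ in $(G/\gt(G))/(G/\gt(G))^{\gm}$ is nontrivial because in a $\Qx$-torsion-free nilpotent $R$-powered group no nontrivial element lies in $G^{\gm}$ (if $\bar g = \bar h^{\gm}$ then... actually one must be careful, so instead use Proposition~\ref{p:PREPARATORY PROPOSITION 2}-style reasoning or the fact that $G/\gt(G) \in \NO$ by Proposition~\ref{p:Class Closures} and its torsion is trivial, so any central $\Qx$-series has torsion-free factors, forcing $\bar g \notin (G/\gt(G))^{\gm}$ for suitable $\gm$). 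Then $(G/\gt(G))/(G/\gt(G))^{\gm}$ is finite $\go$-type by Proposition~\ref{p:PREPARATORY PROPOSITION 1}, giving the required map.

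The main obstacle I anticipate is the case $g \notin \gt(G)$: unlike the torsion case, there is no single $\go$-member $\gb$ with $g \notin G^{\gb}$ guaranteed by an existing proposition, so I need to argue that in the $\Qx$-torsion-free group $G/\gt(G)$ — which is in $\NO$ and has trivial torsion — the intersection $\bigcap_{\gb} (G/\gt(G))^{\gb}$ over $\go$-members $\gb$ contains no nontrivial element. The hypothesis that every $\Qx$-torsion element is $\go$-torsion is exactly what rules out the obstruction: it prevents $G/\gt(G)$ from acquiring, in a quotient, an unbounded $\gp$-primary part for $\gp \in \go$ that would be $\Qx$-divisible and hence invisible to $\FTO$-quotients. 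I would make this precise by induction along a central $\Qx$-series of $G/\gt(G)$ with $\go$-restricted factors, lifting the separating homomorphism factor by factor — essentially the same bootstrapping used in Cases 2 and 3 of Theorem~\ref{t:Abelian Is Gomega}. The other routine points (closure of $\NO$ under subgroups/quotients, boundedness of torsion elements' orders) are all available from earlier results.
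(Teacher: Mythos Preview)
Your torsion case via Propositions~\ref{p:PREPARATORY PROPOSITION 2} and~\ref{p:PREPARATORY PROPOSITION 1} is correct and in fact tidier than the paper's treatment. The genuine gap is the non-torsion case. You need that in the $\Qx$-torsion-free group $G' = G/\gt(G) \in \NO$ every nontrivial $\bar g$ avoids $(G')^{\gm}$ for some $\go$-member $\gm$, and your sketch does not supply this. An arbitrary central $\Qx$-series of $G'$ with $\go$-restricted factors need not have isolated terms, so the successive quotients $G'/G_i$ need not remain torsion-free and the argument cannot recurse; and Cases~2 and~3 of Theorem~\ref{t:Abelian Is Gomega} are purely abelian moves that do not tell you how to lift a separating homomorphism through a nilpotent extension. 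What is actually needed is induction on the class of $G'$ via the \emph{upper} central series: if $\bar g \notin Z(G')$ pass to $G'/Z(G')$ (still torsion-free since $Z(G')$ is $\Qx$-isolated), and if $\bar g \in Z(G')$ one must use the abelian case on $Z(G')$ to produce $N \leq_{\Qx} Z(G')$ with $\bar g \notin N$ and $Z(G')/N \in \FTO$, so that $\bar g N$ becomes $\go$-torsion in $G'/N$ and Proposition~\ref{p:PREPARATORY PROPOSITION 2} applies. This is exactly the paper's proof, which runs the class induction from the start rather than first passing to $G/\gt(G)$: the split is $g \in Z(G)$ versus $g \notin Z(G)$, and the central case is handled by precisely the device just described. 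So your torsion/non-torsion split does not bypass the class induction; it only defers it, and the part you flagged as ``the main obstacle'' is where the missing idea (reduce a central element to a torsion element in a quotient via the abelian theorem on $Z(G)$) has to be supplied.

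A separate minor issue: in the easy direction, the claim that $\gfi(g) = 1$ for \emph{every} $\Qx$-homomorphism onto an $\FTO$-group whenever $g \in \gt(G) \setminus \gt_{\go}(G)$ is false (take $g$ of order $\gp\gs$ with $\gp \in \go$, $\gs \in \go'$, and project onto the $\gp$-component). The coprimality argument from Lemma~\ref{l:Seperable Implies Isolated} should instead be applied to the $\go'$-component of $g$ in the primary decomposition of $\gt(G)$ furnished by Theorem~\ref{t:DecompositionTheorem}.
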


\begin{proof}
Suppose $G$ is residually $\FTO,$ and let $g \neq 1$ be a $\Qx$-torsion element. There exists $H \in \FTO$ and a $\Qx$-homomorphism $\gfi$ of $G$ onto $H$ such that $\gfi(g) \neq 1.$ Since $g \in \gt(G),$ there exists $0 \neq \ga \in \Qx$ such that $g^{\ga} = 1.$ Hence, $1 = \gfi(g^{\ga}) = [\gfi(g)]^{\ga}.$ Since $\gfi(g) \neq 1$ and $H$ is $\go$-torsion, $\ga$ must be an $\go$-member. Therefore, $\gt(G)$ is $\go$-torsion.

Conversely, assume that $\gt(G)$ is an $\go$-torsion group. We prove that $G$ is residually $\FTO$ by induction on the class $c$ of $G.$
\begin{itemize}
\item If $G$ has class $1,$ then it is abelian. By assumption, $G$ is $\go '$-torsion free. Since $\gt(G) \cong_{\Qx} \gt(G)/\{1\},$ the trivial $\Qx$-subgroup of $G$ is $\go '$-isolated in $G$ by Lemma~\ref{l:torsionfree and isolated}. Now, $G$ is $\go$-restricted by Corollary~\ref{c:Abelian SOGroup is AOGroup}. Therefore, $G$ also satisfies Condition ($B_{\go}),$ and thus, has property $\GO$ by Theorem~\ref{t:Abelian Is Gomega}. Hence, $\{1\}$ is $\FTO$-separable in $G.$ By Lemma~\ref{l:Separability and Residual}, $G$ is residually $\FTO.$
\item Assume that $G$ has class $c > 1$ and that the result is true for all $\go$-restricted \nilR s of smaller class. Let $1 \neq g \in G.$ We claim that there exists a \nilQx \ of finite $\go$-type and a $\Qx$-homomorphism $\gfi$ of $G$ onto this group such that $\gfi(g) \neq 1.$
\begin{enumerate}
\item If $g \notin Z(G),$ then $gZ(G) \neq Z(G)$ in $G/Z(G).$ Now, $G/Z(G)$ is nilpotent of class less than $c.$ Furthermore, $\gt(G/Z(G))$ is an $\go$-torsion group because $\gt(G)$ is. By induction, there exists $L \in \FTO$ and a $\Qx$-homomorphism $\gfi$ from $G/Z(G)$ onto $L$ such that $\gfi(gZ(G)) \neq 1.$ Hence, if $\psi : G \rightarrow G/Z(G)$ is the natural $\Qx$-homomorphism, then $\gfi \circ \psi$ satisfies the claim.
\item Suppose $1 \neq g \in Z(G).$ Clearly, $\gt(Z(G))$ is $\go$-torsion since $\gt(G)$ is $\go$-torsion. By Proposition~\ref{p:Class Closures}, $Z(G) \in \NO.$ Hence, $Z(G) \in \AO$ by Corollary~\ref{c:Abelian SOGroup is AOGroup}. By the induction hypothesis, there is a $\Qx$-subgroup $N$ of $Z(G)$ such that $Z(G)/N \in \FTO$ and $g \notin N.$ Consequently, $gN \neq N$ and $(gN)^{\ga} = N$ in $G/N$ for some $\go$-member $\ga.$ Since $G/N \in \NO,$ there exists an $\go$-member $\gb$ such that $gN \notin (G/N)^{\gb}$ by Proposition~\ref{p:PREPARATORY PROPOSITION 2}. And so, $(G/N)/(G/N)^{\gb} \in \FTO$ by Proposition~\ref{p:PREPARATORY PROPOSITION 1}. If
\[
\gfi_{1} : G \rightarrow G/N \hbox{ \ and \ } \gfi_{2} : G/N \rightarrow \frac{G/N}{(G/N)^{\gb}}
\]
are the natural $\Qx$-homomorphisms, then $\gfi = \gfi_{2} \circ \gfi_{1}$ satisfies the claim.
\end{enumerate}
\end{itemize}
\end{proof}

We are now prepared to prove the following:

\begin{thm}\label{t:ORestricted Nilpotent HAS PROPERTY GO}
Let $\go$ be a set of primes in $\Qx.$ Every $\go$-restricted \nilQx \ has property $\GO$ for its family of normal $\Qx$-subgroups.
\end{thm}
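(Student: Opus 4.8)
The plan is to reduce the statement to Theorem~\ref{t:ORestricted Nilpotent Is Residually FO} via the separability/residuality dictionary of Lemma~\ref{l:Separability and Residual}. Fix $G \in \NO$ and let $H \unlhd_{\Qx} G$ be an $\go'$-isolated normal $\Qx$-subgroup; I must show that $H$ is $\FTO$-separable in $G$. Since $\FTO$ is closed under $\Qx$-homomorphic images (Corollary~\ref{c:MAX}), Lemma~\ref{l:Separability and Residual}(2) says this is equivalent to $G/H$ being residually $\FTO$, so the whole problem becomes: show that $G/H$ is residually $\FTO$.

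To invoke Theorem~\ref{t:ORestricted Nilpotent Is Residually FO}, I need two facts about $G/H$: that it lies in $\NO$, and that every one of its $\Qx$-torsion elements is $\go$-torsion. The first is immediate from Proposition~\ref{p:Class Closures}, since $\NO$ is closed under $\Qx$-quotients. For the second, observe that $H$ being $\go'$-isolated and normal makes $G/H$ be $\go'$-torsion-free by Lemma~\ref{l:torsionfree and isolated}; I then pass from ``$\go'$-torsion-free'' to ``every $\Qx$-torsion element is $\go$-torsion'' by a coprimality argument in the PID $\Qx$: given $\bar g \in G/H$ with $\bar g^{\alpha} = 1$ for some $0 \neq \alpha \in \Qx$, factor $\alpha$ (up to a unit) as $\alpha = \beta\gamma$ with $\beta$ an $\go$-member or a unit and $\gamma$ an $\go'$-member or a unit; then $(\bar g^{\beta})^{\gamma} = 1$ forces $\bar g^{\beta}$ to be $\go'$-torsion, hence trivial, so $\bar g^{\beta} = 1$ and $\bar g$ is $\go$-torsion (the cases where $\beta$ or $\gamma$ is a unit are handled directly by applying the inverse unit).

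With these two facts, Theorem~\ref{t:ORestricted Nilpotent Is Residually FO} gives that $G/H$ is residually $\FTO$, which by the reduction above means $H$ is $\FTO$-separable in $G$; since $H$ was an arbitrary $\go'$-isolated normal $\Qx$-subgroup, $G$ has property $\GO$ for its family of normal $\Qx$-subgroups. There is essentially no obstacle left at this stage: all of the substantive work — the induction on nilpotency class, Propositions~\ref{p:PREPARATORY PROPOSITION 1} and~\ref{p:PREPARATORY PROPOSITION 2}, and the abelian base case through Theorem~\ref{t:Abelian Is Gomega} — is already packaged inside Theorem~\ref{t:ORestricted Nilpotent Is Residually FO}. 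The only mildly delicate point is the elementary translation in the previous paragraph between $\go'$-torsion-freeness of $G/H$ and its $\Qx$-torsion elements being $\go$-torsion, and even that is routine.
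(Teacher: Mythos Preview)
Your proof is correct and follows essentially the same route as the paper: reduce via Lemma~\ref{l:Separability and Residual}(2) to showing $G/H$ is residually $\FTO$, then apply Proposition~\ref{p:Class Closures}, Lemma~\ref{l:torsionfree and isolated}, and Theorem~\ref{t:ORestricted Nilpotent Is Residually FO}. The only difference is that you spell out the coprimality argument passing from ``$\go'$-torsion-free'' to ``$\gt(G/H)$ is $\go$-torsion'', whereas the paper simply asserts it.
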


\begin{proof}
Let $G \in \NO$ and assume that $H$ is an $\go '$-isolated normal $\Qx$-subgroup of $G.$ By Lemma~\ref{l:torsionfree and isolated}, $G/H$ is $\go '$-torsion-free. Since $G \in \NO,$ we have $G/H \in \NO$ by Proposition~\ref{p:Class Closures}. By Theorem~\ref{t:ORestricted Nilpotent Is Residually FO}, $G/H$ is residually $\FTO$ because $\gt(G/H)$ is $\go$-torsion. And so, $H$ is $\FTO$-separable in $G$ by Lemma~\ref{l:Separability and Residual}.
\end{proof}

\begin{thm}\label{t:UpperCentralFactors}
Let $\go$ be a set of primes in $\Qx.$ If $G$ is a $\Qx$-torsion-free \nilQx \ and has property $\GO,$ then each $\Qx$-factor group $\gz_{i + 1}G/\gz_{i}G$ of its upper central $\Qx$-series has property $\GO.$
\end{thm}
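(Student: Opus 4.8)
The plan is to use that each factor $\gz_{i+1}G/\gz_{i}G$ equals the center $Z(G/\gz_{i}G)$ and is therefore an \emph{abelian} $\Qx$-group, so by Theorem~\ref{t:Abelian Is Gomega} it is enough to prove that $\gz_{i+1}G/\gz_{i}G$ satisfies Condition ($B_{\go}$). I argue by contradiction. If it does not, then by Proposition~\ref{p:Condition Equivalence} some $\Qx$-quotient of $\gz_{i+1}G/\gz_{i}G$ contains a $\gp$-quasicyclic $\Qx$-subgroup for some $\gp \in \go$. Write such a quotient as $(\gz_{i+1}G/\gz_{i}G)/(M/\gz_{i}G) \cong_{\Qx} \gz_{i+1}G/M$, where $\gz_{i}G \leq_{\Qx} M \leq_{\Qx} \gz_{i+1}G$, and pull the quasicyclic subgroup back to a $\Qx$-subgroup $D$ with $M \leq_{\Qx} D \leq_{\Qx} \gz_{i+1}G$ and $D/M \cong_{\Qx} Q(\gp^{\infty})$. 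Since $\gz_{i+1}G/\gz_{i}G$ is central in $G/\gz_{i}G$, every $\Qx$-subgroup of it is normal in $G/\gz_{i}G$; hence $M \unlhd_{\Qx} G$ and $D \unlhd_{\Qx} G$.

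The next step is to pass from $M$ to its $\go '$-isolator in $G$. Let $\overline{M}$ be the preimage in $G$ of $\gt_{\go '}(G/M)$. By Theorem~\ref{t:TorsionIsNormal}, $\gt_{\go '}(G/M) \unlhd_{\Qx} G/M$ and $(G/M)/\gt_{\go '}(G/M)$ is $\go '$-torsion-free; therefore $\overline{M} \unlhd_{\Qx} G$, the quotient $G/\overline{M} \cong_{\Qx} (G/M)/\gt_{\go '}(G/M)$ is $\go '$-torsion-free, and $\overline{M}$ is $\go '$-isolated in $G$ by Lemma~\ref{l:torsionfree and isolated}. The crucial point is that $D \cap \overline{M} = M$: any $dM \in D/M$ that lies in $\overline{M}/M = \gt_{\go '}(G/M)$ is annihilated both by a power of $\gp$ (because $D/M \cong_{\Qx} Q(\gp^{\infty})$ is $\gp$-torsion) and by an $\go '$-member; since $\gp \in \go$ divides no $\go '$-member, these two annihilators are coprime in the PID $\Qx$, which forces $dM = M$. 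Hence, by the $\Qx$-isomorphism theorems, $D\overline{M}/\overline{M} \cong_{\Qx} D/(D \cap \overline{M}) = D/M \cong_{\Qx} Q(\gp^{\infty})$, so $G/\overline{M}$ contains a non-trivial $\Qx$-divisible $\Qx$-subgroup by Lemma~\ref{l:QuasiDivisible}.

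To conclude, observe that any $\Qx$-homomorphism of $G/\overline{M}$ onto a group in $\FTO \subseteq \FT$ restricts to the trivial map on the $\Qx$-divisible $\Qx$-subgroup $D\overline{M}/\overline{M}$, by the lemma preceding Definition~\ref{d:go-isolated}. Thus a non-trivial element of $D\overline{M}/\overline{M}$ is sent to $1$ by every such homomorphism, so $G/\overline{M}$ is not residually $\FTO$. Since $\FTO$ is closed under $\Qx$-homomorphic images by Corollary~\ref{c:MAX}, Lemma~\ref{l:Separability and Residual} then shows that $\overline{M}$ is not $\FTO$-separable in $G$. But $\overline{M}$ is an $\go '$-isolated normal $\Qx$-subgroup of $G$, contradicting the assumption that $G$ has property $\GO$. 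Therefore $\gz_{i+1}G/\gz_{i}G$ satisfies Condition ($B_{\go}$) and, being abelian, has property $\GO$ by Theorem~\ref{t:Abelian Is Gomega}.

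The main obstacle is the middle step: $M$ itself need not be $\go '$-isolated in $G$, so property $\GO$ of $G$ cannot be applied to $M$ directly, and one must instead work with its $\go '$-isolator $\overline{M}$; the heart of the argument is then verifying that the quasicyclic obstruction survives the passage to $G/\overline{M}$, i.e. that $D \cap \overline{M} = M$. (The hypothesis that $G$ is $\Qx$-torsion-free ensures in addition that each $\gz_{i+1}G/\gz_{i}G$ is itself $\Qx$-torsion-free, but the argument above uses only that $D/M$ is a $\gp$-torsion $\Qx$-group with $\gp \in \go$.)
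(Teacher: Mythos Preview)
Your proof is correct but takes a genuinely different route from the paper's.

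The paper argues directly: it takes an arbitrary $\go'$-isolated $\Qx$-subgroup $H/\gz_{i}G$ of $\gz_{i+1}G/\gz_{i}G$, shows that the preimage $H$ is $\go'$-isolated in $G$ (this is precisely where the $\Qx$-torsion-freeness of $G$ enters, via the fact that each $\gz_{j}G$ is $\Qx$-isolated in $G$), applies property $\GO$ of $G$ to obtain for each $g \in \gz_{i+1}G \setminus H$ a normal $N \unlhd_{\Qx} G$ with $G/N \in \FTO$ and $g \notin HN$, and then pushes the separation back down: the subgroup $(N\gz_{i}G \cap \gz_{i+1}G)/\gz_{i}G$ does the job inside $\gz_{i+1}G/\gz_{i}G$, with the $\Qx$-isomorphism theorems and Corollary~\ref{c:MAX} guaranteeing that the resulting quotient is of finite $\go$-type.

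Your approach instead exploits the abelian characterization (Theorem~\ref{t:Abelian Is Gomega} together with Proposition~\ref{p:Condition Equivalence}) and argues by contradiction: a failure of Condition~($B_{\go}$) yields a $\gp$-quasicyclic obstruction in some quotient $\gz_{i+1}G/M$, and you show this obstruction survives the passage to $G/\overline{M}$, where $\overline{M}$ is the $\go'$-isolator of $M$ in $G$, contradicting property $\GO$ for $G$. This route leans on heavier preliminary machinery, but---as your final remark correctly notes---it never uses the $\Qx$-torsion-free hypothesis on $G$. The paper's proof genuinely needs that hypothesis to lift $\go'$-isolation from $H/\gz_{i}G$ up to $H$; your argument therefore establishes a mildly stronger statement, at the cost of invoking the structure theory behind Proposition~\ref{p:Condition Equivalence} and Theorem~\ref{t:Abelian Is Gomega}.
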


By Theorem~\ref{t:Abelian Is Gomega}, $\gz_{i + 1}G/\gz_{i}G$ also satisfies Condition ($B_{\go}$).

\begin{proof}
We begin by observing that $\gz_{i}G$ is $\Qx$-isolated in $G$ because $G$ is $\Qx$-torsion-free (see \cite{majewicz_and_zyman-2012}). If $H \unrhd_{\Qx} \gz_{i}G$ for some $H \leq_{\Qx} G,$ then it easily verified that $H$ is $\go '$-isolated in $G$ whenever $H/\gz_{i}G$ is an $\go '$-isolated $\Qx$-subgroup in $\gz_{i + 1}G/\gz_{i}G.$

We claim that $H/\gz_{i}G$ is $\FTO$-separable in $\gz_{i + 1}G/\gz_{i}G.$ By hypothesis, we know that $H$ is $\FTO$-separable in $G.$ Thus, for each $g \in \gz_{i + 1}G \setminus H,$ there exists a normal $\Qx$-subgroup $N$ of $G$ (which depends on $g$) such that $G/N \in \FTO$ and $g \notin HN.$
It is clear that
\[
\frac{N\gz_{i}G \cap \gz_{i + 1}G}{\gz_{i}G} \unlhd_{\Qx} \frac{\gz_{i + 1}G}{\gz_{i}G} \hbox{ \ \ and \ \ } g\gz_{i}G \notin \frac{H}{\gz_{i}G} \frac{(N\gz_{i}G \cap \gz_{i + 1}G)}{\gz_{i}G}.
\]
Using the $\Qx$-isomorphism theorems, we have
\begin{eqnarray*}
\frac{\gz_{i + 1}G/\gz_{i}G}{(N\gz_{i}G \cap \gz_{i + 1}G)/\gz_{i}G} & \cong_{\Qx} & \frac{\gz_{i + 1}G}{N\gz_{i}G \cap \gz_{i + 1}G} \cong_{\Qx} \frac{\gz_{i + 1}G(N\gz_{i}G)}{N\gz_{i}G}\\
                                                                     & \cong_{\Qx} & \frac{N\gz_{i + 1}G/N}{N\gz_{i}G/N} \leq_{\Qx} \frac{G/N}{N\gz_{i}G/N}.
\end{eqnarray*}
Thus, $(\gz_{i + 1}G/\gz_{i}G)/((N\gz_{i}G \cap \gz_{i + 1}G)/\gz_{i}G)$ is $\Qx$-isomorphic to a $\Qx$-subgroup of a $\Qx$-quotient of $G/N$ and, thus, finite $\go$-type by Corollary~\ref{c:MAX}. Since $g$ was chosen arbitrarily, $H/\gz_{i}G$ is $\FTO$-separable in $\gz_{i + 1}G/\gz_{i}G.$ This proves the claim and completes the proof.
\end{proof}

\vspace{.1in}

\noindent \underline{Open Problem:} Find other classes of nilpotent $\Qx$-powered groups for which every $\go$-restricted \nilQx \ has property $\GO$ for its family of $\Qx$-subgroups contained in each of these classes.

\end{document}